\newtheorem{theorem}{Theorem}[section]
\newtheorem{alg}[theorem]{Algorithm}
\newtheorem{lemma}[theorem]{Lemma}
\newtheorem{remark}[theorem]{Remark}
\newtheorem{assumption}[theorem]{Assumption}
\numberwithin{equation}{section}
\newcommand{\beq}{\begin{equation}}
\newcommand{\eeq}{\end{equation}}
\newcommand{\beqa}{\begin{eqnarray}}
\newcommand{\eeqa}{\end{eqnarray}}
\newcommand{\nab}{\nabla}
\newcommand{\nr}[1]{\ensuremath{\left\|{#1} \right\|}}
\newcommand{\te}{\tilde e}
\newcommand{\grad}{\nabla}
\newcommand{\tu}{\tilde u}
\begin{document}

\title[Choice of optimization norm for AA optimization]{On the choice of optimization norm for Anderson acceleration of the Picard iteration for Navier-Stokes equations}
\author{Elizabeth Hawkins$^{*}$ and    Leo G Rebholz$^{*}$}
\dedicatory{\vspace{-10pt}\normalsize{ $^{*}$School of Mathematical and Statistical Sciences, Clemson University,
Clemson, SC 29634, USA\\
}}

\keywords{Navier-Stokes equations; Anderson acceleration; Picard iteration; optimization norm choice}
\subjclass[2000]{65B99; 65N22; 35Q30; 65N30; 65H10}

\begin{abstract}

While the most recent Anderson acceleration (AA) convergence theory  [Pollock et al, {\it IMA Num. An.}, 2021] requires that the AA optimization norm  match the Hilbert space norm associated with the fixed point operator, in implementations the $\ell^2$ norm is perhaps the most common choice. Unfortunately, so far there is little research done regarding this discrepancy which might reveal when it is fine to use $\ell^2$.
To address this issue, we consider AA applied to the  Picard iteration for the Navier-Stokes equations (NSE) with varying choices of the AA optimization norm.  We first prove a sharpened and generalized convergence estimate for depth $m$ AA-Picard for the NSE with the $H^1_0$ AA optimization norm by using a problem-specific analysis, utilizing a sharper treatment of the nonlinear terms than previous AA-Picard convergence studies, removing a small data assumption, and developing new AA term identities in the NSE nonlinear term estimates.  Next, we prove 
a convergence result for when $L^2$ is used as the AA optimization norm, and this estimate is found to be very similar to that of the $H^1_0$ case.
While no analogous theory seems possible for the $\ell^2$ norm, several numerical tests were run to compare AA-Picard convergence with varying choices of AA optimization norm.  These tests revealed that convergence behavior was always similar for $L^2$ and $H^1_0$ and {\it usually but not always} similar for $\ell^2$: on a test problem for channel flow past a cylinder with coarser meshes, convergence of AA-Picard using $\ell^2$ performs significantly worse than using $L^2$ and $H^1_0$.  
%This suggests that for optimal robustness of the AA-Picard solver for the NSE, the implementation should match the theory and use either  $L^2$ or $H^1_0$ for the AA optimization norm.
\end{abstract}

\maketitle

\vspace{-.25in}
\section{Introduction}\label{s:intro}

	We consider an Anderson accelerated nonlinear solver for the Navier-Stokes equations (NSE) that model incompressible fluid flow, which are given for steady flows on a domain $\Omega \subset \mathbb{R}^d, d=2,3$ by
	
	\begin{equation}\label{NS}
		\left\{\begin{aligned}
			-\nu \Delta u+u\cdot\nabla u+ \nabla p&={f} \quad \text{in}~\Omega,\\
			\nabla\cdot {u}&=0\quad \text{in}~\Omega,\\
			{u}&=0 \quad \text{on}~~\partial\Omega,
		\end{aligned}\right.
	\end{equation}
where $u$ and $p$ are the unknown fluid velocity and pressure, $\nu>0$ is the kinematic viscosity,  and ${f}$ is a given external forcing (such as gravity or bouyancy). The Reynolds number $Re \sim \frac{1}{\nu}$ is a physical constant that describes the complexity of a flow: higher $Re$ is typically associated with more complex physics and non-unique solutions of \eqref{NS}.  We consider the system \eqref{NS} equipped with homogenous Dirichlet boundary conditions, but our results can extend to nonhomogeneous mixed Dirichlet/Neumann boundary conditions as well as to solving the time dependent NSE at a fixed time step in a time stepping scheme.  It is well known that \eqref{NS} admits weak solutions for any $\nu>0$ and $f\in H^{-1}(\Omega)$ \cite{Laytonbook}, and under a small data (sufficient) condition $\kappa:=M\nu^{-2} \|f \|_{H^{-1}}<1$ (where $M$ is a domain-size dependent constant, see section 2), solutions are unique \cite{GR86,Laytonbook,temam}.

A common nonlinear solver for \eqref{NS}  is  the Picard iteration, which is given by 
	\begin{align*}
		-\nu \Delta u_{k+1}+u_k\cdot\nabla u_{k+1}+ \nabla p_{k+1}&={f}, \\
		\nabla\cdot {u}_{k+1}&=0, \\
		u_{k+1} |_{\partial\Omega} &= 0.
	\end{align*}
The Picard iteration for the NSE is globally stable, and is globally linearly convergent with rate $\kappa$ if $\kappa<1$ \cite{GR86,J16}.  As $Re$ increases (i.e. $\kappa$ increases), the convergence of Picard slows, and for $Re$ large enough the method fails to converge \cite{PRTX25,PRX19}.  Unfortunately, this failure occurs for $Re$ well within the range of physically relevant problems \cite{Laytonbook}. 
To improve the speed and robustness of Picard for the NSE, incorporating Anderson acceleration (AA) was proposed in \cite{PRX19} and was found in \cite{PRX19,PR21,PR23} to significantly accelerate a converging Picard iteration as well as dramatically increase the range of $Re$ for which convergence could be achieved.  Moreover, it was rigorously proven in \cite{PRX19} that AA improves the linear convergence rate of Picard by the gain of the AA optimization problem (details in section 2), giving theory to the improvements seen in computational tests.    Of course, other techniques have been proposed over the years to improve the Picard iteration for NSE and related systems such as \cite{ES96,FMW19} and many others, but what makes AA especially interesting and useful is its simple implementation, its typically negligible computational cost, and its ability to be effective on a wide range of problems; more AA background on AA is given in Section 2.  We note also that AA-Picard was also shown to be an excellent nonlinear preconditioner for Newton solvers for NSE \cite{MXZ25,PRTX25}, and our work herein applies to AA-Picard as both a standalone solver and as a nonlinear preconditioner.

Despite AA being developed by D.G. Anderson in 1965 \cite{Anderson65}, the 2019 result of \cite{PRX19} that proved AA accelerates the Picard iteration for the NSE was the first time it was proven AA actually accelerated convergence of a fixed point iteration for a nonlinear problem.  The results in \cite{PRX19} were then generalized to contractive fixed point iterations in \cite{EPRX20} and extended further with sharper estimates and even holding for noncontractive fixed point iterations in \cite{PR21}.  A key requirement in these results is that the AA optimization norm must match the Hilbert space norm the fixed point function is defined on (e.g. for Picard for NSE, the norm is $H^1_0$).  However, this presents a discrepancy between theory and practice because typically the $\ell^2$ norm is used in AA implementations.
%instead of ; %even though it is not the associated Hilbert space norm.  
There are several reasons practitioners choose $\ell^2$, including simplicity in implementation and in using major codes that have AA built in \cite{petsc-web-page,gardner2022sundials}, using legacy codes since these are older than the theory, but perhaps mostly because AA using $\ell^2$ has a long history of often working well.  
%In fact, even in \cite{I23,PR23} use the $\ell^2$ norm for this iteration and also get good results. 
%Hence there is a discrepancy between the existing AA theory and common AA implementation, but unfortunately there is little work done in this direction.  
There is little research done regarding AA optimization norm choice, although one important contribution is the 2022 paper of Yang, Townsend and Apell\"o \cite{YTA22} that showed for certain classes of PDEs (e.g. nonlinear Helmholtz equation), using $H^{-s}$ for the optimization norm can provide significantly better convergence results than using $\ell^2$.

%  Since it is known from e.g. \cite{YTA22} that choosing the right AA optimization norm can be critical for obtaining fast (or even any) convergence of a fixed point iteration, this discrepancy of theory vs. practice in AA optimization norm choice reveals a gap in the AA convergence theory regarding how different choices of norm may affect convergence.

The purpose of this paper is to analytically and numerically investigate the convergence of AA-Picard for the NSE with different choices of AA optimization norm.
%in an attempt to shed light for this particular problem on potential differences in AA effectiveness.
 %between using the norm the theory assumes and using a different norm.  
 %While the theory from \cite{PR21,PRX19} requires the AA optimization norm be $H^1_0$ for this iteration (and tests therein using $H^1_0$ perform very well), several tests done in  \cite{I23,PR23} use the $\ell^2$ norm for this iteration and also get good results.  
 %This paper settles finds with theoretical and numerical results that one is safe to use either $H^1_0$ or $L^2$ as the optimization norm with AA-Picard for NSE, and while using $\ell^2$ often gives results similar to those found with $L^2$ and $H^1$, there are (not diabolical) problem settings where $\ell^2$ performs significantly worse.
 This paper has three main results.  First, we employ a problem-specific analysis using the $H^1_0$ norm for the AA optimization in which we improve and generalize known convergence results.  This approach allows us to take advantage of sharper nonlinear term estimates.  Additionally, our results improve on those from \cite{PR25,PRX19} by extending to general AA depth $m$, removing the small data restriction, and developing new AA identities that allow for a smooth analysis of the higher order terms.  Second, we extend these new results to the case of using $L^2$ as the AA optimization norm.  Here, we prove a convergence estimate that is observed (through a new representation of the AA gain in the convergence result) to be very similar to that of the $H^1_0$ case.  Third, we give results for three numerical tests using varying AA optimization norms that first illustrate our theory that AA-Picard performs about equally well using $H^1_0$ and $L^2$ optimization norms, but also show that on a common benchmark problem where AA-Picard 
 using $L^2$ and $H^1_0$ optimization norms perform well, when using $\ell^2$ the performance is significantly worse and even fails in one mesh.
 While our results are for a specific application problem, they have general implications since they demonstrate that even when the theory and tests show that using AA with the $L^2$ optimization norm is a good choice, one may get significantly worse results or even failure when using $\ell^2$ instead.  
 
%  give similar results on each test, and results do not deteriorate as the meshes are refined (to the contrary, convergence typically improves as the meshes are refined).  For the use of $\ell^2$, however, while it gives similar results to  $H^1_0$ and $L^2$ on two of the numerical tests, for channel flow past a cylinder with coarser (but not that coarse) meshes,  

%Herein, we develop new theory that proves AA-Picard using the $H^1_0$ and $L^2$ norms for the AA optimization give very similar convergence results.  This is unexpected since $H^1_0$ is the Hilbert space associated with the Picard iteration for the NSE.  These results rely on 
%problem-specific analysis that would seemingly not be possible using the general AA theory of \cite{PR21}.  The key ideas and improvements are a sharper treatment of the nonlinear terms over what is used in \cite{PRX19}, extension to general $m$ by a careful treatment and development of AA identities from successive differences and residuals, extension to more general data (i.e. no assumption of small problem data as in \cite{PRX19}), and finally the observation that when written in a particular form the first order terms in the convergence estimates for the cases of $H^1_0$ and $L^2$ optimization norms  are essentially the same.  Due to the relationship between $L^2$ and $\ell^2$, this new convergence theory using $L^2$ suggests using $\ell^2$ as the AA optimization norm may also generally perform about as well as $H^1_0$ and $L^2$, but it does not appear possible to create such a theory for $\ell^2$ since it is a vector norm.  

This paper is arranged as follows.  Section 2 defines notation, gives mathematical preliminaries, and provides background on Navier-Stokes equations, the Picard iteration, Anderson acceleration, and AA-Picard.  Our new analysis is given in section 3, and numerical tests are found in section 4.  Finally, conclusions and future directions are discussed in section 5.

\section{Notation and Preliminaries}

We consider an open connected set $\Omega\subset\mathbb{R}^d$ (d=2 or 3) as the domain, and the $L^2(\Omega)$ inner product and norm are denoted as $(\cdot,\cdot)$ and $\|\cdot\|$, respectively.  Other norms will be clearly labeled with subscripts.  The notation  $\langle \cdot,\cdot\rangle$
is used to represent the duality pairing between $H^{-1}(\Omega)$ and $H^1_0(\Omega)$, and we use $\|\cdot\|_{-1}$ to denote the norm on $H^{-1}(\Omega)$.

Define the natural pressure and velocity function spaces for the NSE by 
	\begin{align*}
		&Q:=\{q\in {L}^2(\Omega): \int_{\Omega}q\ dx=0\},\\
		& X:=
		\{v\in H^1\left(\Omega\right): v=0~~\text{on}~ \partial\Omega\},
		\end{align*}
		along with the divergence-free velocity space
		\[
		 V:=
		\{v\in X:  (\nabla \cdot v,q)=0\ \forall q\in Q\}.
		\]

\color{black}
	
The Poincar\'e inequality holds on $X$: there exists a constant $C_P(\Omega)$ depending only on the domain size which satisfies 
\begin{align}\label{eqn:poincare}
\| \phi \| \le C_P \|\nabla \phi \| \mbox{ for all } \phi\in X.
\end{align}
		
We use the skew-symmetric form of the nonlinear term: for all $v,w,z\in X,$
\[
b^*(v,w,z) = (v\cdot\nabla w,z) + \frac12 ((\nabla \cdot v)w,z).
\]
One could also use other energy preserving formulations such as rotational and EMAC forms \cite{CHOR17} and get similar results as we find herein.

A key property of $b^*$ is that 
\[
b^*(v,w,w)=0 \ \forall w,v\in X.
\]
If the first argument of $b^*$ is divergence-free, i.e. it satisfies $\| \nabla \cdot v\|=0$, then skew-symmetry has no effect.   However, when using certain finite element subspaces of $X$ and $Q$ (such as Taylor-Hood elements), the discretely divergence-free space does not contain only divergence-free functions \cite{JLMNR17}.  Hence, to be general, we utilize skew-symmetry in our analysis.

Bounding the $b^*$ functional is critical to our residual convergence analysis.  The next lemma is known from e.g. \cite{temam,Laytonbook}, but we give a brief proof so that constants are fixed for later reference.

\begin{lemma} \label{bbounds}
Suppose $v,w,z\in X$.  Then there exists $M=M(\Omega)$ such that
\begin{align}
| b^*(v,w,z) | & \le M C_P^{-1/2}    \| v \|^{1/2} \| \nabla v \|^{1/2}  \| \nabla w \| \| \nabla z \|, \label{H1L2bound} \\
| b^*(v,w,z) | & \le M    \| \nabla v \|  \| \nabla w \| \| \nabla z \|, \label{bstarbound} 
%| b^*(v,w,z) | & \le \frac12 \|  v\cdot\nabla w + \nabla \cdot (vw) \|_{-1} \| \nabla z \|. \label{bboundm1}
\end{align}
%Further, if additionally $w\in V_h$, then there exists $c=c(\Omega)$ such that
%\begin{equation}
%| b^*(v,w,z) |  \le  c \| v \| \| \nabla w \|^{1/2} \| \Delta_h w \|^{1/2}  \| \nabla z \|. \label{higherbound}
%\end{equation}

\end{lemma}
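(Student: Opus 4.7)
The natural starting point is to exploit the boundary condition $v|_{\partial\Omega}=0$ by integrating by parts the divergence piece of $b^*$. A direct componentwise calculation gives $((\nabla\cdot v)w,z) = -(v\cdot\nabla w,z) - (v\cdot\nabla z, w)$, which turns the definition of $b^*$ into the manifestly skew-symmetric representation
\begin{equation*}
b^*(v,w,z) \;=\; \tfrac{1}{2}(v\cdot\nabla w,z) \;-\; \tfrac{1}{2}(v\cdot\nabla z, w).
\end{equation*}
This symmetric form puts $w$ and $z$ on equal footing and, crucially, removes any separate treatment of $\nabla\cdot v$, which if bounded directly by $\|\nabla v\|\|w\|_{L^4}\|z\|_{L^4}$ would force the appearance of $\|\nabla v\|$ rather than the weaker factor $\|v\|^{1/2}\|\nabla v\|^{1/2}$ required by \eqref{H1L2bound}.

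Next I would bound each of the two trilinear pieces via H\"older with exponents $(3,2,6)$: for instance $|(v\cdot\nabla w,z)| \le \|v\|_{L^3}\|\nabla w\|\|z\|_{L^6}$. The factor $\|v\|_{L^3}$ is handled by the $L^p$ interpolation $\|v\|_{L^3}\le \|v\|^{1/2}\|v\|_{L^6}^{1/2}$ combined with the Sobolev embedding $\|v\|_{L^6}\le C(\Omega)\|\nabla v\|$ (valid for $d\in\{2,3\}$), yielding $\|v\|_{L^3}\le C(\Omega)\|v\|^{1/2}\|\nabla v\|^{1/2}$, and similarly $\|z\|_{L^6}\le C(\Omega)\|\nabla z\|$. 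Applying this estimate to $(v\cdot\nabla w,z)$ and the symmetric one to $(v\cdot\nabla z,w)$, then summing and collecting the resulting domain-dependent constant into the form $M C_P^{-1/2}$ by an appropriate choice of $M$, produces \eqref{H1L2bound}.

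Estimate \eqref{bstarbound} then follows from \eqref{H1L2bound} in one line: the Poincar\'e inequality \eqref{eqn:poincare} gives $\|v\|^{1/2}\le C_P^{1/2}\|\nabla v\|^{1/2}$, so $\|v\|^{1/2}\|\nabla v\|^{1/2}\le C_P^{1/2}\|\nabla v\|$, which exactly cancels the $C_P^{-1/2}$ prefactor and collapses \eqref{H1L2bound} into $M\|\nabla v\|\|\nabla w\|\|\nabla z\|$. The deliberate reuse of the same letter $M$ in both statements is what makes the small-data parameter $\kappa = M\nu^{-2}\|f\|_{-1}$ quoted in the introduction consistent with both bounds simultaneously.

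The main obstacle is simply the bookkeeping of constants so that the single $M$ is genuinely consistent across \eqref{H1L2bound} and \eqref{bstarbound} and with the role $M$ plays in the uniqueness threshold; the underlying analytic inequalities (integration by parts, H\"older, Sobolev embedding, Poincar\'e) are all standard and uniform in $d\in\{2,3\}$. One subtlety worth recording explicitly is that the boundary term in the integration-by-parts step vanishes only because $v\in X$ satisfies $v|_{\partial\Omega}=0$, which is precisely why the lemma is stated on $X$ rather than on a larger space.
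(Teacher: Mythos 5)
Your proposal is correct and follows essentially the same route as the paper: rewrite $b^*$ in the skew-symmetric form $\tfrac12(v\cdot\nabla w,z)-\tfrac12(v\cdot\nabla z,w)$ via integration by parts, apply H\"older with exponents $(3,2,6)$, use the Sobolev embedding into $L^6$ together with the bound $\|v\|_{L^3}\le C\|v\|^{1/2}\|\nabla v\|^{1/2}$, and finish with Poincar\'e. The only cosmetic differences are that you derive the $L^3$ bound from $L^2$--$L^6$ interpolation rather than citing it, and you obtain \eqref{H1L2bound} first and then \eqref{bstarbound}, whereas the paper fixes $M=C_P^{1/2}C_6C_3$ by proving \eqref{bstarbound} and then reads off \eqref{H1L2bound}.
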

\begin{proof}
We start with Green's theorem following \cite{J16} to obtain
\[
b^*(v,w,z) = \frac12 (v\cdot\nabla w,z) - \frac12 (v\cdot\nabla z,w).
\]
Next, using H\"older's inequality, we get that
\begin{align*}
b^*(v,w,z) &= \frac12 (v\cdot\nabla w,z) - \frac12 (v\cdot\nabla z,w) \\
& \le \frac12 \| v \|_{L^3} \| \nabla w \|_{L^2} \| z \|_{L^6} + \frac12 \| v \|_{L^{3}} \| \nabla z \|_{L^{2}} \| w \|_{L^{6}},
\end{align*}
and then applying a Sobolev inequality from the embedding of $H^1(\Omega)$ into $L^6(\Omega)$ produces
\begin{equation}
b^*(v,w,z) \le C_6 \| v \|_{L^3} \| \nabla w \| \| \nabla z \|. \label{L3bound}
\end{equation}
Next, using the Sobolev bound on $L^3(\Omega)$ \cite{BGHRR25}: $\| \phi \|_{L^3} \le C_3 \| \phi \|^{1/2} \| \nabla \phi \|^{1/2}$ and obtain
\begin{equation}
b^*(v,w,z) \le C_6 C_3 \| v \|^{1/2} \| \nabla v \|^{1/2}  \| \nabla w \| \| \nabla z \|. \label{H1L2boundb}
\end{equation}
An application of Poincar\`e to $\| v \|$ will produce the bound \eqref{bstarbound}, with $M=C_P^{1/2} C_6 C_3$.  Using this definition of $M$ in \eqref{H1L2boundb}, we establish \eqref{H1L2bound} as well.

\end{proof}

\subsection{Finite element preliminaries}

Let $\tau_h(\Omega)$ be a conforming mesh and $X_h(\tau_h) \times Q_h(\tau_h) \subset X \times Q$
be conforming finite element spaces for the velocity and pressure, respectively.  We require the pair $(X_h,Q_h)$ to satisfy  the inf-sup condition
\[
\sup_{0\neq  v \in X_h} \frac{(\nab\cdot  v ,q)}{\|\nabla  v \|}\ge \beta \|q\|\qquad \forall q\in Q_h,
\]
for some $\beta>0$ that is independent of $h$.  Commonly used examples are the Taylor-Hood and Scott-Vogelius elements, with the latter possibly requiring meshes to have a particular macro-element structure depending on the polynomial degree \cite{JLMNR17,GS19,arnold:qin:scott:vogelius:2D,Z05}.

Define the discretely divergence-free space by 
\[
	 V_h:=
		\{v \in X_h:  (\nabla \cdot v,q)=0\ \forall q\in Q_h\}.
\]

\subsection{NSE preliminaries}

The weak form of the NSE \eqref{NS} is given by \cite{Laytonbook}: Find $u\in V$ satisfying 
\begin{equation}
\nu(\nabla u,\nabla v) + b^*(u,u,v) = \langle f,v \rangle\ \forall v\in V. \label{weakNS}
\end{equation}

Solutions exist for the weak steady NSE system \eqref{weakNS}  for any $f\in H^{-1}(\Omega)$ and $\nu>0$ \cite{GR86,Laytonbook}, and any such solution satisfies
\begin{equation}
\| \nabla u \| \le \nu^{-1} \| f \|_{-1}. \label{nsstab}
\end{equation}
Hence we assume throughout that $\nu>0$ and $f\in H^{-1}(\Omega)$.

A sufficient condition for uniqueness of solutions to \eqref{weakNS} is that the data satisfy the smallness condition $\kappa:=M\nu^{-2} \|f \|_{-1}<1$ \cite{GR86,Laytonbook}.
While $\kappa<1$ is not necessary for uniqueness, it is known that for sufficiently large data (i.e. $\kappa$ large enough), uniqueness breaks down and the NSE will admit multiple solutions \cite{Laytonbook}.  Note that we do not assume $\kappa<1$ in our analysis.

The finite element NSE formulation is given by: Find $u\in V_h$ satisfying 
\begin{equation}
\nu(\nabla u,\nabla v) + b^*(u,u,v) = \langle f,v \rangle\ \forall v\in V_h. \label{FEMNS}
\end{equation}
The same results as above for boundedness and well-posedness that hold in the $V$-formulation will also hold in the $V_h$ formulation, despite $V_h$ not necessarily being a subset of $V$ \cite{Laytonbook}.

\subsection{Picard for NSE preliminaries}

The finite element form of the Picard iteration for the NSE is: Given $u_k\in V_h$, find $u_{k+1}\in V_h$ satisfying 
\begin{equation}
\nu(\nabla u_{k+1},\nabla v) + b^*(u_k,u_{k+1},v) =\langle f,v \rangle\ \forall v\in V_h. \label{FEMPic}
\end{equation}
It is shown in \cite{PRX19} that the solution operator $g:V_h\rightarrow V_h$ associated with \eqref{FEMPic} is well-defined for any $\nu>0$ and $f\in H^{-1}(\Omega)$, and is uniformly bounded by
\begin{equation}
\| \nabla g(u_k) \|  \le \nu^{-1} \| f\|_{-1}. \label{Picbound}
\end{equation}  
Thus we can write \eqref{FEMPic} as the fixed point iteration $u_{k+1}=g(u_k)$.

It is also known that the error  in Picard satisfies \cite{GR86,PRX19}
\begin{equation}
\| \nabla (u-u_{k+1} ) \| \le \kappa \| \nabla (u-u_k) \|, \label{Picerr}
\end{equation}
and the fixed point residual satisfies
\begin{equation}
\| \nabla (g (u_{k})-u_{k} ) \| \le \kappa \| \nabla ( g(u_{k-1})-u_{k-1}) \|. \label{Picresid}
\end{equation}
If $\kappa<1$, \eqref{Picerr} and \eqref{Picresid} show that the Picard iteration will convergence linearly with rate (at least)  $\kappa$ to the unique weak NSE solution.  For $\kappa$ sufficiently large, the Picard iteration for the NSE will (generally) not converge \cite{PR25}.

We recall the following result from \cite{PR25} about smoothness properties of $g$.
\begin{lemma}[Lemma 6.2 from \cite{PR25}]\label{gproperties}
The operator $g:V_h\rightarrow V_h$ defined as the solution operator to \eqref{FEMPic} is Lipschitz continuously Frechet differentiable and satisfies
\[
\| \nabla g \| \le \nu^{-1} \| f\|_{-1},\ \ \| \nabla g' \| \le \kappa,
\]
where $g'$ is the Frechet derivative of $g$.

\end{lemma}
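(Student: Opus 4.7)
The first bound $\|\nabla g(u)\| \le \nu^{-1}\|f\|_{-1}$ is already \eqref{Picbound}, so the plan focuses on producing the Fr\'echet derivative of $g$, bounding it, and verifying Lipschitz continuity. The plan is to first guess the derivative from a formal difference calculation: subtracting the Picard equation \eqref{FEMPic} at $u$ and $u+h$ and using the bilinearity identity $b^*(u+h, g(u+h), v) - b^*(u, g(u), v) = b^*(u, g(u+h)-g(u), v) + b^*(h, g(u+h), v)$, I identify $g'(u)h$ as the unique $\phi\in V_h$ solving
\[
\nu(\nabla \phi,\nabla v) + b^*(u,\phi,v) = -b^*(h,g(u),v)\quad\forall v\in V_h.
\]
Well-posedness of this linear problem follows since the bilinear form $\nu(\nabla\cdot,\nabla\cdot) + b^*(u,\cdot,\cdot)$ is coercive (using $b^*(u,\phi,\phi)=0$), and linearity of the right-hand side in $h$ makes $h\mapsto g'(u)h$ linear. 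Testing with $v=\phi$ and applying \eqref{bstarbound} together with \eqref{Picbound} gives $\nu\|\nabla\phi\|^2 \le M\|\nabla h\|\nu^{-1}\|f\|_{-1}\|\nabla\phi\|$, hence $\|\nabla g'(u)\|\le M\nu^{-2}\|f\|_{-1}=\kappa$.

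Next I verify that this candidate really is the Fr\'echet derivative. Setting $w = g(u+h)-g(u)$ and $\psi = w - g'(u)h$, subtract the two defining equations to obtain
\[
\nu(\nabla \psi,\nabla v) + b^*(u,\psi,v) = -b^*(h, g(u+h)-g(u), v) = -b^*(h,w,v).
\]
A preliminary bound on $w$ is obtained in the same way as for $g'(u)h$ (test $\nu(\nabla w,\nabla v) + b^*(u,w,v) = -b^*(h,g(u+h),v)$ with $w$) yielding $\|\nabla w\|\le \kappa\|\nabla h\|$. Testing the $\psi$-equation with $v=\psi$ and applying \eqref{bstarbound} then gives $\|\nabla \psi\| \le M\nu^{-1}\kappa\|\nabla h\|^2$, which is $o(\|\nabla h\|)$ as required, confirming Fr\'echet differentiability with derivative $g'(u)$.

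Finally, for Lipschitz continuity of $u\mapsto g'(u)$, let $\phi_i = g'(u_i)h$ for $i=1,2$ and subtract the two defining equations, splitting $b^*(u_1,\phi_1,v)-b^*(u_2,\phi_2,v) = b^*(u_1-u_2,\phi_1,v) + b^*(u_2,\phi_1-\phi_2,v)$ and $-b^*(h,g(u_1),v)+b^*(h,g(u_2),v) = -b^*(h,g(u_1)-g(u_2),v)$. Testing with $v=\phi_1-\phi_2$ and using $b^*(u_2,\phi_1-\phi_2,\phi_1-\phi_2)=0$ reduces the estimate to bounds on two $b^*$ terms; invoking \eqref{bstarbound}, the already-derived $\|\nabla\phi_1\|\le\kappa\|\nabla h\|$, and the standard Lipschitz bound $\|\nabla(g(u_1)-g(u_2))\|\le\kappa\|\nabla(u_1-u_2)\|$ (obtained by the same subtract-test argument applied to the Picard equations themselves), I get $\|\nabla(\phi_1-\phi_2)\| \le 2M\kappa\nu^{-1}\|\nabla h\|\|\nabla(u_1-u_2)\|$, which is the desired Lipschitz continuity.

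The only step with any subtlety is the $o(\|h\|)$ verification, since it requires the auxiliary a priori bound on $w = g(u+h)-g(u)$ before one can close the estimate on $\psi$; the rest is a systematic bookkeeping exercise in the bilinearity of $b^*$ combined with \eqref{bstarbound} and \eqref{Picbound}. No small-data assumption is needed because skew-symmetry absorbs the troublesome $b^*$ term on the left-hand side of each auxiliary problem.
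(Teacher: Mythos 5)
The paper does not prove this lemma at all; it is quoted verbatim from the reference [PR25] (Lemma 6.2 there), so there is no in-paper argument to compare against. Your self-contained proof is correct and is the standard route: identify the linearization $g'(u)h=\phi$ with $\nu(\nabla \phi,\nabla v)+b^*(u,\phi,v)=-b^*(h,g(u),v)$, use skew-symmetry for coercivity (which is exactly why no small-data assumption is needed), test with the solution itself to get $\|\nabla g'(u)h\|\le M\nu^{-2}\|f\|_{-1}\|\nabla h\|=\kappa\|\nabla h\|$, and close the $o(\|\nabla h\|)$ remainder and the Lipschitz estimate for $u\mapsto g'(u)$ with the same subtract-and-test bookkeeping. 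All the individual estimates check out, including the auxiliary bounds $\|\nabla(g(u+h)-g(u))\|\le\kappa\|\nabla h\|$ and the final constant $2M\kappa\nu^{-1}$; nothing is missing.
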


\subsection{Anderson acceleration preliminaries}

AA is defined as follows, for a given fixed point function $G:H\rightarrow H$ for a Hilbert space $H$ and optimization in a norm denoted $\| \cdot \|_*$. 
%(for our case of interest, $G=g$ from above, $H=H^1_0(\Omega)$, and we will consider varying choices of optimization norms).
\begin{alg}[Anderson acceleration \cite{Anderson65}] \label{alg:anderson}
Anderson acceleration with depth $m$ reads: \\ 
Step 0: Choose $u_0\in H.$\\
Step 1: Find $\tilde u_1\in H $ such that $\tilde u_1 = G(u_0)$.  
Set $u_1 = \tilde u_1$. \\
Step $k$: For $k+1=1,2,3,\ldots$ Set $m_k = \min\{ k, m\}.$\\
\indent [a.] Find $\tilde u_{k+1} = G(u_k)$. \\
\indent [b.] Solve the minimization problem for $\{ \alpha_{j}^{k+1}\}_{k-m_k}^k$
\[
\min_{\sum\limits_{j=k-m_k}^{k} \alpha_j^{k+1}  = 1} 
\left\|   \sum\limits_{j=k-m_k}^{k} \alpha_j^{k+1}( \tilde u_{j+1} - u_j) \right\|_* .
\] 
\indent [c.] Set  $$u_{k+1} = \beta_{k+1} \sum\limits_{j= k-m_k}^k 
\alpha_j^{k+1} \tilde u_{j+1} + (1-\beta_{k+1}) 
\sum\limits_{j = k-m_k}^k \alpha_j^{k+1} u_j,$$  
for damping parameter $0 < \beta_{k} \le 1$.
\end{alg}

While the general AA algorithm allows for adaptive depth $m_k$, for simplicity of our single-step analysis we will assume a fixed depth $m$. For simplicity, we also assume no relaxation in this paper ($\beta_k=1$).  We note that AA is often equivalently reformulated to use an unconstrained optimization problem \cite{PR25}.

% In fact, our results are for a single step and thus $m_k$ is not a dynamic quantity.

An implementation of AA is given as follows \cite{PR25}.  Define $F$ to be the rectangular matrix with columns  $\{ \tilde u_{j+1} - u_j \}_{j=k-m}^k$, and the vector $\alpha=\{ \alpha_j \}_{j=k-m}^k$.  Then the optimization problem can be written equivalently as
\[
\min_{\sum\limits_{j=k-m}^{k} \alpha_j^{k+1}  = 1} 
\left\|   \sum\limits_{j=k-m}^{k} \alpha_j^{k+1}( \tilde u_{j+1} - u_j) \right\|_* \iff \min_{\sum\limits_{j=k-m}^{k} \alpha_j^{k+1}  = 1}  \left\| F\alpha    \right\|_*^2
\] 
In the finite element setting, $ \| F \alpha \|_X^2 = \alpha^T F^T M F \alpha$, where for example $M$ is the mass matrix if $*=L^2(\Omega)$, the stiffness matrix if $*=H^1_0(\Omega)$, or the identity matrix if $*=\ell^2$.

Next, define the reduced $QR=M^{1/2} F$, so that
\[
F^T  M F = R^T Q^T Q R = R^T R,
\]
and note that $M^{1/2}$ need not be computed, it only needs to exist which it will if $M$ is symmetric positive definite.  From here, a Cholesky factorization of $F^T  M F$ produces $R$, so that $\alpha^T F^T  M F \alpha = \| R \alpha \|_{2}^2$.  Now setting $R\alpha=0$ and directly inserting the constraint $\sum\limits_{j=k-m}^{k} \alpha_j^{k+1}  = 1$ gives
a $(m +1) \times m$ linear least squares problem that can be easily solved for any reasonable $m$.

AA was originally introduced in 1965 by D.G. Anderson \cite{Anderson65}, and its use exploded in the scientific community after the Walker and Ni 2011 paper \cite{WaNi11} that showed AA is a simple way to improve many types of nonlinear solvers.  AA has recently been applied to thousands of different problems, and
 %including Newtonian flow  \cite{LWWY12,PRX19}, electromagnetic problems \cite{FAC19_AA}, geophysics \cite{Yang2021_AA}, 
%geometry optimization \cite{PDZGQL2018_AA}, nuclear physics \cite{AJW17,TKSHCP15}, molecular interaction \cite{SM11},  and many others e.g. \cite{K18,LW16,FZB20,WSB21_AA,X22}.  
if one peruses the over 830 citations of \cite{WaNi11}, it is hard to find a nonlinear system that AA is not being used on.  The first convergence theory for AA was established by Toth and Kelley in 2015 in the paper \cite{ToKe15} and then sharpened in \cite{K18}, and essentially proved that AA would do no harm to a converging fixed point iteration.

The first convergence theory for AA that showed it {\it accelerated} a nonlinear solver was the paper on AA-Picard for the NSE \cite{PRX19} which we extend results for herein.  Convergence results for general contractive \cite{EPRX20} and noncontractive \cite{PR21} cases came as extensions of \cite{PR21}, sharpening estimates along the way.  AA has also been built into widely used software such at PETSc and SUNDIALS \cite{petsc-web-page,gardner2022sundials} and through examples in deal.ii \cite{D2}.
For more on AA and its history, see the book \cite{PR25} and the excellent review papers \cite{K18,S24}.

\subsection{AA-Picard for NSE preliminaries}

The AA-Picard iteration for the discrete incompressible steady NSE  can now be written as Algorithm \ref{alg:anderson} with $G=g$, where $g$ is the solution operator of \eqref{FEMPic}.  For simplicity, we consider the case of no relaxation, $\beta_k =1$ and constant depth $m$.  Adding relaxation to our results can be performed following analysis in \cite{PR21}, and modifies the results in the expected way.

The following theorem was proven in \cite{PRX19} for AA-Picard for NSE with $m=1$ and $\kappa<1$.  We denote the nonlinear residual by $w_{k} := g(u_{k-1}) - u_k = \tilde u_{k} - u_{k-1}$.  The optimization problem in \cite{PRX19} used the natural Hilbert space norm of the problem, $H^1_0$, and reads: find $\{ \alpha_{j}^{k+1}\}_{k-1}^k$ satisfying
\[
\min_{\sum\limits_{j=k-1}^{k} \alpha_j^{k+1}  = 1} 
\left\|   \nabla \left( \sum\limits_{j=k-1}^{k} \alpha_j^{k+1}w_j \right) \right\| ,
\] 
and we note we have shifted the indexing by one from \cite{PRX19}, to match that of the more recent AA theory paper \cite{PR21}.  The `gain of the optimization problem,' a term coined in \cite{PRX19}, is then given by
\[
\theta_{k,1}^{H^1_0} = \frac{ \| \nabla (\alpha_{k}^k w_{k} + \alpha_{k-1}^k w_{k-1}) \| } {\| \nabla w_{k} \|}.
\]
Note that $0 \le \theta_{k,1}^{H^1_0} \le  1$, and the only way $\theta_{k,1}^{H^1_0}=1$ is in the (unlikely) case that the optimization cannot do better than the standard Picard step (i.e. when $\alpha^k_{k}=1$ and $\alpha^k_{k-1}=0$).  Hence $\theta_{k,1}^{H^1_0}$ can be considered the mechanism through which AA improves convergence, as is demonstrated in the following result from \cite{PRX19}.

\begin{theorem}[Convergence of the AA-Picard for NSE residual with $m=1$ under a small data assumption \cite{PRX19}] 
\label{thm:m1}
Suppose $\kappa<1$, the $H^1_0$ norm is used for the AA optimization, and $0 < |\alpha_{k-1}^k| < \bar \alpha$ for some fixed $\bar \alpha$.
Then on any step where $\alpha_{k-1}^k \ne 0$, 
the $m=1$ AA-Picard iterates satisfy
\begin{equation}\label{eqn:ck-res}
\| \nabla w_{k+1} \| \le \theta_{k,1}^{H^1_0} \kappa \|\nabla w_{k} \|
+\frac{M \bar\alpha}{\nu (1-\kappa)^2}  \|\nabla w_{k} \| \nr{\grad w_{k-1}}.
\end{equation}
\end{theorem}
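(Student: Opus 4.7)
The plan is to derive an equation for $w_{k+1}=g(u_k)-u_k$ by combining the weak Picard equations for $\tilde u_{k-1}=g(u_{k-2})$ and $\tilde u_k=g(u_{k-1})$ using the AA weights (which sum to $1$), then testing with $w_{k+1}$ to extract a leading $\theta_{k,1}^{H^1_0}\kappa$ contribution and isolate a quadratic correction. The algebraic crux is the bilinearity identity
\[
b^*(\alpha A+\beta B,\alpha C+\beta D,v) = \alpha\,b^*(A,C,v) + \beta\,b^*(B,D,v) + \alpha\beta\,b^*(A-B,D-C,v),
\]
applied with $A=u_{k-2}$, $B=u_{k-1}$, $C=\tilde u_{k-1}$, $D=\tilde u_k$, $\alpha=\alpha^k_{k-1}$, $\beta=\alpha^k_k$, which rewrites the AA-weighted Picard sum as a Picard-type equation for $u_k$ with input $\bar u_{k-1}:=\alpha^k_{k-1}u_{k-2}+\alpha^k_k u_{k-1}$ plus a quadratic correction involving $b^*(u_{k-2}-u_{k-1},\tilde u_k-\tilde u_{k-1},v)$. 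Subtracting this from the Picard equation for $g(u_k)$, using the identity $b^*(u_k,g(u_k),v)-b^*(\bar u_{k-1},u_k,v)=b^*(u_k,w_{k+1},v)+b^*(u_k-\bar u_{k-1},u_k,v)$, testing with $v=w_{k+1}$, and invoking skew-symmetry $b^*(u_k,w_{k+1},w_{k+1})=0$ yields
\[
\nu\|\nabla w_{k+1}\|^2 = b^*(\bar u_{k-1}-u_k,u_k,w_{k+1}) - \alpha^k_{k-1}\alpha^k_k\,b^*(u_{k-2}-u_{k-1},\tilde u_k-\tilde u_{k-1},w_{k+1}).
\]

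For the leading term, observe that $\bar u_{k-1}-u_k=-(\alpha^k_{k-1}w_{k-1}+\alpha^k_k w_k)$, so by the AA minimization $\|\nabla(\bar u_{k-1}-u_k)\|=\theta_{k,1}^{H^1_0}\|\nabla w_k\|$. Writing $u_k=g(u_k)-w_{k+1}$ and using skew-symmetry of $b^*$ once more to kill the resulting $b^*(\cdot,w_{k+1},w_{k+1})$ contribution gives $b^*(\bar u_{k-1}-u_k,u_k,w_{k+1})=b^*(\bar u_{k-1}-u_k,g(u_k),w_{k+1})$. Lemma \ref{bbounds} together with the Picard stability $\|\nabla g(u_k)\|\le\nu^{-1}\|f\|_{-1}$ from \eqref{Picbound} and the identification $M\nu^{-1}\|f\|_{-1}=\nu\kappa$ then bounds this term by $\nu\kappa\theta_{k,1}^{H^1_0}\|\nabla w_k\|\|\nabla w_{k+1}\|$, which after dividing by $\nu\|\nabla w_{k+1}\|$ produces the first term of the estimate.

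The main obstacle will be the correction term, where the product form $\|\nabla w_k\|\|\nabla w_{k-1}\|/(1-\kappa)^2$ must emerge. After applying Lemma \ref{bbounds} and the coefficient bound $|\alpha^k_{k-1}\alpha^k_k|\le\bar\alpha(1+\bar\alpha)$, the task reduces to controlling the AA-iterate differences $\|\nabla(u_{k-1}-u_{k-2})\|$ and $\|\nabla(\tilde u_k-\tilde u_{k-1})\|$ by the residuals. The key identity
\[
\tilde u_k-\tilde u_{k-1} = (u_{k-1}-u_{k-2}) + (w_k-w_{k-1}),
\]
combined with the Lipschitz bound $\|\nabla(\tilde u_k-\tilde u_{k-1})\|\le\kappa\|\nabla(u_{k-1}-u_{k-2})\|$ from Lemma \ref{gproperties}, yields the self-improving inequality $(1-\kappa)\|\nabla(u_{k-1}-u_{k-2})\|\le\|\nabla w_k\|+\|\nabla w_{k-1}\|$, and a second Lipschitz application on $\tilde u_k-\tilde u_{k-1}$ then produces the $(1-\kappa)^{-2}$ factor. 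Under the hypothesis $\kappa<1$, which via \eqref{Picresid} forces the consecutive residuals $\|\nabla w_{k-1}\|$ and $\|\nabla w_k\|$ to be comparable, the resulting squared-sum of residuals can be absorbed into the product $\|\nabla w_k\|\|\nabla w_{k-1}\|$, with the $(1+\bar\alpha)$ factor absorbed into the final constant. Dividing through by $\nu\|\nabla w_{k+1}\|$ then completes the estimate in the form stated.
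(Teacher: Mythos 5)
Your overall architecture is the right one, and it matches how the paper proves the sharpened version of this statement (Theorem \ref{thm:m1b}): take the $\alpha$-weighted combination of the Picard equations at steps $k$ and $k-1$, subtract from the step-$(k+1)$ equation, test with $w_{k+1}$, kill $b^*(u_k,w_{k+1},w_{k+1})$ by skew-symmetry, and recognize $\bar u_{k-1}-u_k=-(\alpha^k_k w_k+\alpha^k_{k-1}w_{k-1})$ so that the optimization gain $\theta_{k,1}^{H^1_0}$ appears. Your one-shot bilinearity identity (valid because $\alpha^k_k+\alpha^k_{k-1}=1$) is a clean substitute for the paper's repeated use of $ab+cd=-(a-c)d+a(b+d)$, and your leading-order term, including the replacement of $u_k$ by $g(u_k)$ via a second use of skew-symmetry and the stability bound \eqref{Picbound}, correctly produces $\theta_{k,1}^{H^1_0}\kappa\|\nabla w_k\|$.

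The gap is in the correction term, and it is not cosmetic. Your symmetric identity leaves the remainder $\alpha^k_{k-1}\alpha^k_k\,b^*(e_{k-1},\tilde e_k,w_{k+1})$ with $e_{k-1}=u_{k-1}-u_{k-2}$ appearing (after the Lipschitz bound $\|\nabla\tilde e_k\|\le\kappa\|\nabla e_{k-1}\|$) \emph{squared}: the best you can get is $\kappa(1-\kappa)^{-2}\bigl(\|\nabla w_k\|+\|\nabla w_{k-1}\|\bigr)^2$. The claimed absorption of this square into the product $\|\nabla w_k\|\,\|\nabla w_{k-1}\|$ fails: that would require a \emph{lower} bound on the ratio $\|\nabla w_k\|/\|\nabla w_{k-1}\|$, which does not exist — the entire point of AA is that $\|\nabla w_k\|$ can be much smaller than $\|\nabla w_{k-1}\|$ — and \eqref{Picresid} cannot help because it concerns the un-accelerated Picard iteration and in any case only bounds that ratio from above. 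The way the paper avoids this (in the proof of Theorem \ref{thm:m1b}) is to group the nonlinear terms asymmetrically, using $u_k-\tilde u_{k-1}=\alpha^k_k\tilde e_k$, so that the leftover correction is $b^*(e_k,\alpha^k_k\tilde e_k,w_{k+1})$ with the step-$k$ difference $e_k$ in the first slot and the step-$(k-1)$ information in the second; the two factors then carry \emph{different} indices and convert to the product $\|\nabla w_k\|\,\|\nabla w_{k-1}\|$ via $\|\nabla e_j\|\lesssim(1-\kappa)^{-1}\|\nabla w_j\|$ applied to each factor separately. You need to redo your splitting of the nonlinear terms along these lines; as written, your argument proves an estimate with $(\|\nabla w_k\|+\|\nabla w_{k-1}\|)^2$ in place of $\|\nabla w_k\|\,\|\nabla w_{k-1}\|$, which is a genuinely weaker (and differently structured) higher-order term than the one claimed.
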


The paper \cite{PRX19} extended Theorem \ref{thm:m1} to $m=2$, but did not give results for larger $m$, and also does not give results without the assumption of small problem data.  However, utilizing AA theory from \cite{PR21}, a more general result can be easily constructed.  For this, we first define a depth $m$ gain of the optimization problem
\[
\theta_{k,m}^{H^1_0} = \frac{ \| \nabla \sum_{j=k-m}^{k} \alpha_{j}^k w_{j}) \| } {\| \nabla w_{k} \|}.
\]
An important assumption for extending AA results outside of the contractive case involves a relationship between $e_j = u_j - u_{j-1}$ and $w_j$'s and comes from \cite{PR21}.
\begin{assumption}\label{assume}
The stage $j$ iterates and residuals of depth $m$ AA-Picard are assumed to satisfy 
\[
\sigma \| \nabla e_j \| \le \| \nabla (w_{j+1} - w_{j} ) \|.
\]
\end{assumption}
This assumption is automatically satisfied if $g$ is contractive, and it also holds for a large class of noncontractive $g$ (see Remark 2.1 of \cite{PR21}). It is shown in \cite{PR21} that this assumption implies that
\begin{equation}
 \| \nabla e_j \| \le K \| \nabla w_{j} \|, \label{ewbound}
\end{equation}
where $K$ depends on $\sigma$, $\theta_k^{H^1_0}$, and the degree to which the elements of $\{ (w_{k} - w_{k-1}), ... , (w_{k-m}-w_{k-m-1}) \}$ are linearly independent with respect to the $H^1_0$ inner product (i.e. $K$ blows up as linear independence is lost, or as $\sigma$ goes to zero).  Herein, the $K$ arises in the convergence analysis only in the higher order terms.
% while our focus is on the first order terms, so we do not consider the technical details of being more precise with $K$ but refer the interested reader to \cite{PR21}.

With Assumption \ref{assume} and the smoothness properties of $g$ from Lemma \ref{gproperties}, convergence of AA-Picard was proven for general $m$ in \cite{PR25} and we state the result now.  As our focus herein is on local convergence, we do not get into the technical details of the higher order terms and being more precise with the constant $C$ in the following theorem; the interested reader is referred to \cite{PR21} for more on this topic.

\begin{theorem}[\cite{PR25}] \label{thmm}
Under Assumption \ref{assume} and the smoothness properties of $g$ given in Lemma \ref{gproperties}, the $k^{th}$ step residual of depth $m$ AA-Picard using $H^1_0$ as the optimization norm satisfies
\[
\| \nabla w_{k+1} \| \le \theta_{k,m}^{H^1_0}  \kappa \| \nabla w_k \| + CK \kappa \| \nabla w_k\| \left( \sum_{j=k-m}^{k-1} \| \nabla w_j \| \right).
\]
\end{theorem}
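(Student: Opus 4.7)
The plan is to follow the standard AA‐style decomposition of the fixed–point residual into a leading term controlled by the AA optimization gain and a quadratic remainder, then to bound the remainder using the smoothness from Lemma \ref{gproperties} together with the iterate/residual comparison \eqref{ewbound} coming from Assumption \ref{assume}. First I would use that the current iterate is $u_k = \sum_{j=k-m}^{k}\alpha_j^k\,\tilde u_j = \sum_{j=k-m}^{k}\alpha_j^k\,g(u_{j-1})$ with $\sum\alpha_j^k=1$ to rewrite
\[
w_{k+1} \;=\; g(u_k)-u_k \;=\; g(u_k)-\sum_{j=k-m}^{k}\alpha_j^k g(u_{j-1}) \;=\; \sum_{j=k-m}^{k}\alpha_j^k\bigl[g(u_k)-g(u_{j-1})\bigr],
\]
which is the entry point for Taylor expansion. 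Then, by the Lipschitz $C^1$ property of $g$ (Lemma \ref{gproperties}) I would expand each bracketed difference around $u_k$, $g(u_k)-g(u_{j-1}) = g'(u_k)(u_k-u_{j-1}) + R_j$, with the Frechet remainder satisfying $\|\nabla R_j\|\le C\,\|\nabla(u_k-u_{j-1})\|^2$ (the constant $C$ coming from $g'$ being Lipschitz, whose Lipschitz bound is proportional to $\kappa$ for the Picard operator).

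Collecting terms and setting $\bar u=\sum\alpha_j^k u_{j-1}$ so that $u_k-\bar u=\sum\alpha_j^k w_j$, the sum reduces to
\[
w_{k+1} \;=\; g'(u_k)\sum_{j=k-m}^{k}\alpha_j^k w_j \;+\; \sum_{j=k-m}^{k}\alpha_j^k R_j.
\]
The leading piece is bounded directly by $\|\nabla g'(u_k)\|\le\kappa$ together with the definition of the gain,
\[
\Bigl\|\nabla g'(u_k)\sum\alpha_j^k w_j\Bigr\| \;\le\; \kappa\,\theta_{k,m}^{H^1_0}\,\|\nabla w_k\|,
\]
giving the first term of the claimed estimate. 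For the remainder I would write the differences telescopically as $u_k-u_{j-1}=\sum_{\ell=j+1}^{k}e_\ell$, which by \eqref{ewbound} admits the bound $\|\nabla e_\ell\|\le K\|\nabla w_\ell\|$, and therefore $\|\nabla(u_k-u_{j-1})\|\le K\sum_{\ell=j+1}^{k}\|\nabla w_\ell\|$.

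The main obstacle is the final bookkeeping step: extracting the factored form $\|\nabla w_k\|\sum_{j=k-m}^{k-1}\|\nabla w_j\|$ from the quadratic remainder rather than settling for the cruder bound $\bigl(\sum_{j}\|\nabla w_j\|\bigr)^2$. The trick is to isolate the most recent increment $e_k$ (which carries the $\|\nabla w_k\|$ factor via $\|\nabla e_k\|\le K\|\nabla w_k\|$) from the earlier increments $e_{j+1},\dots,e_{k-1}$ in each product $\|\nabla(u_k-u_{j-1})\|^2$, using an inequality of the form $\|a+b\|^2\le\|a+b\|\bigl(\|a\|+\|b\|\bigr)$ with $a=e_k$ and $b=u_{k-1}-u_{j-1}$. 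After this split, the pieces with both factors coming from earlier $e_\ell$ can be absorbed into the leading term locally (or majorized by a $\|\nabla w_k\|$ factor since $\|\nabla w_k\|$ dominates the smallest term asymptotically), while the cross terms directly give the desired structure $\|\nabla w_k\|\sum_{j=k-m}^{k-1}\|\nabla w_j\|$, with the constant $CK\kappa$ arising from the Lipschitz constant of $g'$, the residual-to-increment conversion constant $K$, and the coefficient bound $\sum|\alpha_j^k|\le \mathrm{const}$. This is precisely the role of the new AA term identities the introduction highlights, and produces the stated inequality.
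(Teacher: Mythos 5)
The paper does not actually prove Theorem \ref{thmm}; it quotes it from \cite{PR25}, and its own analysis (Theorems \ref{thm:m1b} and \ref{thmH10m}) takes a different, problem-specific route through the Picard variational equations and the trilinear form $b^*$. Your skeleton --- writing $w_{k+1}=\sum_j\alpha_j^k\bigl[g(u_k)-g(u_{j-1})\bigr]$ using $\sum_j\alpha_j^k=1$, Taylor expanding about $u_k$ with a quadratic Fr\'echet remainder, collapsing the linear part to $g'(u_k)\sum_j\alpha_j^k w_j$, and bounding it by $\kappa\,\theta_{k,m}^{H^1_0}\|\nabla w_k\|$ --- is exactly the standard route of \cite{PR21,PR25}, so up to that point you are on the intended path (modulo an off-by-one: $u_k-u_{j-1}=\sum_{\ell=j}^{k}e_\ell$, not $\sum_{\ell=j+1}^{k}e_\ell$).

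The genuine gap is the final bookkeeping step you yourself flag. Expanding $\|\nabla(u_k-u_{j-1})\|^2\le K^2\bigl(\sum_{\ell=j}^k\|\nabla w_\ell\|\bigr)^2$ produces cross terms $\|\nabla w_i\|\,\|\nabla w_{i'}\|$ with both $i,i'<k$, and neither of your proposed fixes works: such terms cannot be ``absorbed into the leading term'' (which is proportional to $\|\nabla w_k\|$), and the claim that they are ``majorized by a $\|\nabla w_k\|$ factor since $\|\nabla w_k\|$ dominates the smallest term asymptotically'' is backwards --- in the converging regime $\|\nabla w_k\|$ is the \emph{smallest} residual, so $\|\nabla w_{k-1}\|\,\|\nabla w_{k-2}\|$ is generally much larger than $\|\nabla w_k\|\,\|\nabla w_j\|$ and is not bounded by it. The mechanism that actually inserts the $\|\nabla w_k\|$ factor in \cite{PR21,PR25} is the orthogonality of the AA least-squares solution in the $H^1_0$ inner product: since $\sum_j\alpha_j^k w_j = w_k - v^*$ with $v^*$ the $H^1_0$-orthogonal projection of $w_k$ onto $\mathrm{span}\{w_k-w_j\}$, one has $\bigl\|\nabla\sum_{j<k}\alpha_j^k(w_k-w_j)\bigr\|=\bigl(1-(\theta_{k,m}^{H^1_0})^2\bigr)^{1/2}\|\nabla w_k\|$, and the linear-independence hypothesis (the same quantity that keeps $K$ in \eqref{ewbound} finite) converts this into individual coefficient bounds of the form $|\alpha_j^k|\,\|\nabla(w_k-w_j)\|\le c\,\|\nabla w_k\|$. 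It is these coefficient bounds, not a size comparison among the residuals, that place one factor of $\|\nabla w_k\|$ into every higher-order product. Without invoking this orthogonality, your argument only yields the cruder remainder $CK^2\kappa\bigl(\sum_{j=k-m}^{k}\|\nabla w_j\|\bigr)^2$ --- which is essentially the shape obtained in the paper's own Theorem \ref{thmH10m} --- and not the factored form asserted in Theorem \ref{thmm}.
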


\section{Improved convergence analysis and alternative optimization norms}

We now consider improving the convergence results for AA-Picard for NSE.  To do this, we use a problem-specific approach that allows us to take advantage of a sharper treatment of the nonlinear terms in order to improve estimates of the first order terms in the convergence analysis, but still exploit 
 ideas from \cite{PR21} for the higher order terms.  
 %While the \cite{PR21} theory is certainly superior for general fixed point operators, it does not allow us to exploit specific properties of the Picard fixed point operator for the NSE.  
 %All of our analysis below is for general problem data satisfying Assumption \ref{assume}.
%
We begin this section by first improving on Theorems \ref{thm:m1} and \ref{thmm} for $m=1$ AA-Picard. With our new approach, we gain an improvement in the first order terms of the residual convergence estimate and also remove the small data assumption used in \cite{PRX19}.  We then extend these results to the case of general $m$.  The proof for general $m$ is significantly more technical and requires some new AA identities in the  nonlinear terms, but follows the same general structure as the $m=1$ case.  Lastly, we extend our results to general $m$ AA-Picard that uses the $L^2$ optimization norm.  This last result has first order terms in the convergence estimate that turn out to be very similar to what is found using the $H^1_0$ norm, which is perhaps not expected since $H^1_0$ is the associated Hilbert space norm.
 %and work in \cite{YTA22} shows that not using the `correct' choice of optimization norm can have a significant negative impact on AA convergence.  
Hence our new theory shows that both $H^1_0$ and $L^2$ are good (if not correct) norm choices.

Due to the use of the $L^2$ norm in the optimization, it is necessary to define the gain of the $L^2$ optimization problem:
\begin{equation}
\theta_{k,m}^{L^2} = \frac{ \| \sum_{j=k-m}^k \hat\alpha_{j}^k w_{j} \| } {\| w_{k} \|},  \label{thetaL2}
\end{equation}
where the $\hat \alpha_j$'s are the optimal $L^2$ coefficients.  Note the notational difference between the optimal $H^1_0$ coefficients (no hats) and the optimal $L^2$ coefficients (yes hats).

%We do not assume a small data condition in any of the results below.  Instead, we use the more general Assumption \ref{assume}, which implies the bound \eqref{ewbound}, i.e. that $\| \nabla e_j \| \le K \| \nabla w_j \|$ for all $j$.  We note this bound \eqref{ewbound} holds under the assumption, for any choice of $m$.
%

The following identity will be used in the analysis below.  Note that the same result holds with $\hat\alpha$'s replacing all the $\alpha$'s.
\begin{lemma}\label{weidentities}
The following identity holds for general $m$ relating $e_j$'s and $w_i$'s:
\[
\sum_{j=k-m}^k  \alpha^k_j w_j
=
e_k +  (1-\alpha^k_k)  e_{k-1}  +   (1-\alpha^k_k - \alpha^k_{k-1}) e_{k-2} + ... +  \left(1- \sum_{j=k-m+1}^k \alpha^k_j \right) e_{k-m}.
\]
\end{lemma}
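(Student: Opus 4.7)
The identity is purely algebraic: there is no analysis, only careful index bookkeeping combined with the constraint $\sum_{j=k-m}^{k}\alpha_{j}^{k}=1$. My plan is to rewrite the left-hand side in a form that isolates the telescoping structure of $e_j=u_j-u_{j-1}$, and then collect terms.

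First, I would recall the AA update from step [c] of Algorithm \ref{alg:anderson} (shifted by one, with $\beta=1$), namely $u_{k}=\sum_{j=k-m}^{k}\alpha_{j}^{k}\tilde u_{j}$, together with the identification $w_{j}=\tilde u_{j}-u_{j-1}$. Substituting these into the left-hand side gives
\[
\sum_{j=k-m}^{k}\alpha_{j}^{k}w_{j}=\sum_{j=k-m}^{k}\alpha_{j}^{k}\tilde u_{j}-\sum_{j=k-m}^{k}\alpha_{j}^{k}u_{j-1}=u_{k}-\sum_{j=k-m}^{k}\alpha_{j}^{k}u_{j-1}.
\]
Next, I would add and subtract $u_{k-1}$ and use the constraint $\sum_{j=k-m}^{k}\alpha_{j}^{k}=1$ to write $u_{k-1}=\sum_{j=k-m}^{k}\alpha_{j}^{k}u_{k-1}$, producing
\[
\sum_{j=k-m}^{k}\alpha_{j}^{k}w_{j}=e_{k}+\sum_{j=k-m}^{k-1}\alpha_{j}^{k}\bigl(u_{k-1}-u_{j-1}\bigr),
\]
where the $j=k$ term drops out.

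The core step is then to telescope $u_{k-1}-u_{j-1}=\sum_{i=j}^{k-1}e_{i}$ for each $j\le k-1$, substitute, and swap the order of summation. Writing $s_i := \sum_{j=k-m}^{i}\alpha_{j}^{k}$, I would obtain
\[
\sum_{j=k-m}^{k-1}\alpha_{j}^{k}\sum_{i=j}^{k-1}e_{i}=\sum_{i=k-m}^{k-1}\Bigl(\sum_{j=k-m}^{i}\alpha_{j}^{k}\Bigr)e_{i}=\sum_{i=k-m}^{k-1}s_{i}\,e_{i}.
\]
Finally, I would apply the constraint once more in the form $s_{i}=1-\sum_{j=i+1}^{k}\alpha_{j}^{k}$ to rewrite each coefficient, which immediately matches the stated expression: the coefficient of $e_{k-1}$ is $1-\alpha_{k}^{k}$, of $e_{k-2}$ is $1-\alpha_{k}^{k}-\alpha_{k-1}^{k}$, and so on down to the coefficient $1-\sum_{j=k-m+1}^{k}\alpha_{j}^{k}$ of $e_{k-m}$. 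The argument does not use any property of the $\alpha_{j}^{k}$'s other than the affine constraint, so the identical derivation goes through with $\hat\alpha_{j}^{k}$ in place of $\alpha_{j}^{k}$. The main (and only real) obstacle is keeping the index gymnastics straight when swapping the double sum; everything else is bookkeeping.
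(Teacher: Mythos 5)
Your proof is correct and takes essentially the same route as the paper's: both substitute $w_j=\tilde u_j-u_{j-1}$, use $u_k=\sum_{j=k-m}^{k}\alpha_j^k\tilde u_j$ together with the affine constraint, add and subtract $u_{k-1}$, and then convert the remaining sum into coefficients of $e_{k-1},\dots,e_{k-m}$. The only difference is organizational — the paper telescopes by repeatedly adding and subtracting partial sums $\sum_j\alpha_j^k u_{i}$ one level at a time, while you write $u_{k-1}-u_{j-1}=\sum_{i=j}^{k-1}e_i$ and swap the order of summation in a single step, which is the same computation presented more compactly.
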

\begin{proof}[Proof of Lemma \ref{weidentities}]

We begin by expanding $\sum_{j=k-m}^k \alpha^k_j w_j$ by using the definition of the $w_j$'s:
\begin{align*}
\sum_{j=k-m}^k & \alpha^k_j w_j
 = \alpha^k_k w_k + \alpha^k_{k-1} w_{k-1} + ... + \alpha^k_{k-m} w_{k-m}\\
& = \left( \alpha^k_k \tilde u_k + \alpha^k_{k-1} \tilde u_{k-1} + ... + \alpha^k_{k-m} \tilde u_{k-m} \right) - \left( \alpha^k_k  u_{k-1} + \alpha^k_{k-1} u_{k-2} + ... + \alpha^k_{k-m} u_{k-m-1} \right)  \\
& =u_k - \left( \alpha^k_k  u_{k-1} + \alpha^k_{k-1} u_{k-2} + ... + \alpha^k_{k-m} u_{k-m-1} \right),
\end{align*}
with the last step using the definition of $u_{k}$.  Next, add and subtract $u_{k-1}$ to the right hand side and use the definition of $e_k$ to get
\[
\sum_{j=k-m}^k  \alpha^k_j w_j
=
e_k + u_{k-1} - \alpha^k_k  u_{k-1} - \alpha^k_{k-1} u_{k-2} - ... -  \alpha^k_{k-m} u_{k-m-1}.
\]
Since $1-\alpha^k_k = \sum_{j=1}^{k-1} \alpha^k_j$, we write the equation as
\[
\sum_{j=k-m}^k  \alpha^k_j w_j
=
e_k +  \sum_{j=k-m}^{k-1} \alpha^k_j u_{k-1}  - \alpha^k_{k-1} u_{k-2} - ... -  \alpha^k_{k-m} u_{k-m-1}.
\]
Next, add and subtract $ \sum_{j=k-m}^{k-2} \alpha^k_j u_{k-2}$ to get
\[
\sum_{j=k-m}^k  \alpha^k_j w_j
=
e_k +  \sum_{j=k-m}^{k-1} \alpha^k_j e_{k-1}  +   \sum_{j=k-m}^{k-2} u_{k-2} - ... -  \alpha^k_{k-m} u_{k-m-1}.
\]
Repeating this process by adding and subtracting each of $ \sum_{j=1}^{k-2}\alpha^k_j u_{k-3}$, $ \sum_{j=k-m}^{k-3}\alpha^k_j u_{k-4}$, ...,  $ \sum_{j=k-m}^{k-m+1}\alpha^k_j u_{k-m}$ to the right hand side gives us
\[
\sum_{j=k-m}^k  \alpha^k_j w_j
=
e_k +  \sum_{j=k-m}^{k-1} \alpha^k_j e_{k-1}  +   \sum_{j=k-m}^{k-2} \alpha^k_j e_{k-2} + ... +  \alpha^k_{k-m} e_{k-m}.
\]
Since  $1 = \sum_{j=1}^{k} \alpha^k_j$, we can equivalently write this equality as
\[
\sum_{j=k-m}^k  \alpha^k_j w_j
=
e_k +  (1-\alpha^k_k)  e_{k-1}  +   (1-\alpha^k_k - \alpha^k_{k-1}) e_{k-2} + ... +  \left(1- \sum_{j=k-m+1}^k \alpha^k_j \right) e_{k-m}.
\]
This completes the proof.

\end{proof}

\subsection{Convergence analysis for AA-Picard with $H^1_0$ optimization norm}

We now prove convergence results for AA-Picard using the $H^1_0$ optimization norm.  We first give the $m=1$ result, followed by the result for general $m$.

\begin{theorem}[Improved convergence result for $H^1_0$ norm in AA optimization] 
\label{thm:m1b}
Under Assumption \ref{assume} and assuming $0<|\alpha_j^k|<\bar{\alpha} \ \forall k,j$ for some fixed $\bar{\alpha}$ and that the residual differences $\{ w_j - w_{j-1} \}_{j=1,2,...,k}$ are not linearly dependent in the $H^1_0$ inner product, the depth $m=1$ AA-Picard iterates found using $H^1_0$ as the optimization norm satisfy
\begin{multline*}
\| \nabla w_{k+1}\| \le 
  \kappa {\theta_{k,1}^{L^2}}^{1/2} {\theta_{k,1}^{H^1_0}}^{1/2}
 \frac{\| \alpha^k_{k} w_{k} + \alpha^k_{k-1} w_{k-1} \|}{\| \hat \alpha^k_{k} w_{k} + \hat \alpha^k_{k} w_{k-1} \|} ^{1/2}  \left( \frac{\| w_{k} \|}{C_P \| \nabla w_{k} \| } \right)^{1/2}  \|\nabla  w_{k} \|
\\ +  K^2 M\nu^{-1}  \kappa  \bar{\alpha} \|\nabla w_{k}\| \| \nabla w_{k-1} \|  . 
\end{multline*}
\end{theorem}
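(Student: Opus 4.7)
The plan is to derive an equation for $w_{k+1} = \tilde u_{k+1} - u_k$ by forming a linear combination of the weak formulations satisfied by $\tilde u_{k+1} = g(u_k)$, $\tilde u_k = g(u_{k-1})$, and $\tilde u_{k-1} = g(u_{k-2})$.  I multiply the $\tilde u_k$-equation by $\alpha_k^k$ and the $\tilde u_{k-1}$-equation by $\alpha_{k-1}^k$, sum them, and use both $u_k = \alpha_k^k \tilde u_k + \alpha_{k-1}^k \tilde u_{k-1}$ and $\alpha_k^k + \alpha_{k-1}^k = 1$ to reduce the linear parts cleanly to $\nu(\nabla u_k,\nabla v)$; subtracting from the $\tilde u_{k+1}$-equation and testing with $v = w_{k+1}$ annihilates $b^*(u_k, w_{k+1}, w_{k+1})$ by skew-symmetry.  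What remains on the right is a combination of $b^*(u_j,u_j,w_{k+1})$ terms coming from the convective part evaluated at each linearization point, plus $b^*(u_{j-1}, w_j, w_{k+1})$ terms coming from writing $\tilde u_j = u_{j-1} + w_j$.

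The main ingredient is an algebraic rearrangement that isolates the AA combination $\hat w := \alpha_k^k w_k + \alpha_{k-1}^k w_{k-1}$ in the \emph{first} slot of a single $b^*$ factor.  Using the bilinear identity $b^*(y,y,z) - b^*(x,x,z) = b^*(y-x,y,z) + b^*(x,y-x,z)$ on the quadratic differences, the substitution $u_{k-2} = u_{k-1} - e_{k-1}$ in the $w_j$-terms, and the $m=1$ instance of Lemma \ref{weidentities}, namely $\hat w = e_k + \alpha_{k-1}^k e_{k-1}$, one finds that the middle-slot factor $b^*(u_{k-1}, \hat w, w_{k+1})$ arising from the $w_j$-terms is exactly consumed by the $e_j$-pieces of the quadratic differences, and the net identity reads
\[
\nu\|\nabla w_{k+1}\|^2 = -b^*(\hat w, u_{k-1}, w_{k+1}) - b^*(\hat w, e_k, w_{k+1}) + \alpha_{k-1}^k\bigl[b^*(e_{k-1}, e_k + e_{k-1}, w_{k+1}) - b^*(e_{k-1}, w_{k-1}, w_{k+1})\bigr].
\]
Only the first term is first order in the residuals; everything else involves a product of two small quantities.

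For the leading term I apply the sharp estimate \eqref{H1L2bound} with $v = \hat w$, giving $|b^*(\hat w, u_{k-1}, w_{k+1})| \le M C_P^{-1/2} \|\hat w\|^{1/2} \|\nabla \hat w\|^{1/2} \|\nabla u_{k-1}\| \|\nabla w_{k+1}\|$.  Dividing through by $\nu\|\nabla w_{k+1}\|$ and invoking the stability bound $\|\nabla u_{k-1}\| \le \nu^{-1}\|f\|_{-1}$ yields the clean leading contribution $\kappa C_P^{-1/2} \|\hat w\|^{1/2} \|\nabla \hat w\|^{1/2}$.  To match the form in the theorem I multiply and divide by $\|w_k\|^{1/2}\|\nabla w_k\|^{1/2}$ and by $\|\hat\alpha_k^k w_k + \hat\alpha_{k-1}^k w_{k-1}\|^{1/2}$, which produces $(\theta_{k,1}^{L^2})^{1/2}$, $(\theta_{k,1}^{H^1_0})^{1/2}$, the ratio of $H^1_0$-optimal to $L^2$-optimal $L^2$ norms, and the factor $(\|w_k\|/(C_P\|\nabla w_k\|))^{1/2}$ by their very definitions.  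The higher-order terms are bounded using the standard inequality \eqref{bstarbound}, Assumption \ref{assume} in the form $\|\nabla e_j\| \le K \|\nabla w_j\|$, and the uniform coefficient bound $|\alpha_{k-1}^k|\le\bar\alpha$; after dividing by $\nu\|\nabla w_{k+1}\|$ and using the stability bound once more to introduce a $\kappa$ where $\|\nabla u_{k-1}\|$ appears implicitly via $\|\nabla\hat w\|$, the remaining pieces collapse into the claimed $K^2 M \nu^{-1} \kappa \bar\alpha \|\nabla w_k\|\|\nabla w_{k-1}\|$ remainder.

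The principal difficulty is the cancellation step.  A priori one sees a leading nonlinear contribution of the form $b^*(u_{k-1}, \hat w, w_{k+1})$ with $\hat w$ in the middle slot, and the sharp bound \eqref{H1L2bound} cannot be applied in that position without loss.  The new AA identity furnished by Lemma \ref{weidentities} is precisely what makes it possible to relocate $\hat w$ into the first slot through exact cancellation against the $e_j$-pieces of the quadratic differences, and this is what both removes the small-data hypothesis of Theorem \ref{thm:m1} (no $(1-\kappa)^{-2}$ factor ever appears in this derivation) and sharpens the $\kappa\theta_{k,1}^{H^1_0}$ leading rate of Theorem \ref{thmm} by the additional small multiplicative factor $(\theta_{k,1}^{L^2})^{1/2}(\|w_k\|/(C_P\|\nabla w_k\|))^{1/2}$.
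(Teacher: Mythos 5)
Your overall strategy is the same as the paper's: form the $\alpha$-weighted combination of the $k$ and $k-1$ Picard equations, use $u_k=\alpha^k_k\tilde u_k+\alpha^k_{k-1}\tilde u_{k-1}$ and the partition of unity, subtract from the step-$(k+1)$ equation, test with $w_{k+1}$, and use the $m=1$ identity $e_k+\alpha^k_{k-1}e_{k-1}=\alpha^k_k w_k+\alpha^k_{k-1}w_{k-1}$ to place the AA residual combination in the \emph{first} slot of $b^*$ so that \eqref{H1L2bound} applies there. Your intermediate identity is in fact algebraically correct (one can check it equals the paper's $-b^*(e_k+\alpha^k_{k-1}e_{k-1},\tilde u_{k-1},w_{k+1})-b^*(e_k,\alpha^k_k\tilde e_k,w_{k+1})$). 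The problem is that your particular rearrangement destroys exactly the structure needed to obtain the constants claimed in the theorem.

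First, your leading term is $-b^*(\hat w,u_{k-1},w_{k+1})$ and you invoke $\|\nabla u_{k-1}\|\le\nu^{-1}\|f\|_{-1}$. That bound is \eqref{Picbound} and applies to $\tilde u_{k-1}=g(u_{k-2})$, i.e.\ to direct outputs of $g$; the AA iterate $u_{k-1}$ is an \emph{affine} (not necessarily convex) combination of such outputs, since the hypothesis only gives $|\alpha^k_j|<\bar\alpha$, so $\|\nabla u_{k-1}\|$ is only controlled up to a factor involving $\bar\alpha$. As written you do not get the clean leading constant $\kappa$; the paper avoids this by arranging the identity so that $\tilde u_{k-1}$, not $u_{k-1}$, sits in the middle slot. (The fix is to write $u_{k-1}=\tilde u_{k-1}-\alpha^{k-1}_{k-2}\tilde e_{k-1}$ and push the second piece into the remainder, but that step is missing.) Second, your three higher-order terms do not ``collapse'' to $K^2M\nu^{-1}\kappa\bar\alpha\|\nabla w_k\|\|\nabla w_{k-1}\|$: bounding $b^*(\hat w,e_k,w_{k+1})$, $\alpha^k_{k-1}b^*(e_{k-1},e_k+e_{k-1},w_{k+1})$ and $\alpha^k_{k-1}b^*(e_{k-1},w_{k-1},w_{k+1})$ with \eqref{bstarbound} and \eqref{ewbound} produces $\|\nabla w_k\|^2$ and $\|\nabla w_{k-1}\|^2$ contributions and, crucially, no factor of $\kappa$. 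In the paper the entire remainder is the single term $-b^*(e_k,\alpha^k_k\tilde e_k,w_{k+1})$, and the $\kappa$ comes from $\|\nabla\tilde e_k\|=\|\nabla(g(u_{k-1})-g(u_{k-2}))\|\le\kappa\|\nabla e_{k-1}\|$ via Lemma \ref{gproperties}; your proposed mechanism for introducing $\kappa$ (``via $\|\nabla\hat w\|$'') has no basis, since $\hat w$ is not an increment of $g$. Both defects have the same root cause: the paper's grouping deliberately keeps quantities of the form $g(\cdot)$ or $g(\cdot)-g(\cdot)$ in the second argument of every surviving $b^*$ term, which is what lets \eqref{Picbound} and $\|\nabla g'\|\le\kappa$ deliver the stated constants. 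To complete your argument you would need to regroup back to the paper's form (or carry the extra $\bar\alpha$-dependent constants and the modified remainder, which proves a weaker statement than the theorem).
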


\begin{remark}
In Theorem \ref{thm:m1b}, there are several equivalent ways to write the first order term coefficient of $\| \nabla w_k\|$.  For example, we can eliminate $\theta_{k,1}^{L^2}$ using its definition to get
\begin{equation}
 \kappa {\theta_{k,1}^{L^2}}^{1/2} {\theta_{k,1}^{H^1_0}}^{1/2}
 \frac{\| \alpha^k_{k} w_{k} + \alpha^k_{k-1} w_{k-1} \|}{\| \hat \alpha^k_{k} w_{k}+ \hat \alpha^k_{k-1} w_{k-1} \|} ^{1/2}  \left( \frac{\| w_{k} \|}{C_P \| \nabla w_{k} \| } \right)^{1/2} 
 =
 \kappa {\theta_{k,1}^{H^1_0}}^{1/2}
 \left( \frac{\| \alpha^k_{k} w_{k} + \alpha^k_{k-1} w_{k-1} \|}{C_P \| \nabla w_{k} \| }\right)   ^{1/2}. \label{cvbound1}
\end{equation}
We could also upper bound the numerator on the right hand side of \eqref{cvbound1} using the Poincar\'e inequality to recover the (less accurate) first order estimate of Theorems \ref{thm:m1} and \ref{thmm}:
\[
 \kappa {\theta_{k,1}^{H^1_0}}^{1/2}
 \left( \frac{\| \alpha^k_{k} w_{k} + \alpha^k_{k-1} w_{k-1} \|}{C_P \| \nabla w_{k} \| }\right)   ^{1/2}
\le \kappa \theta_{k,1}^{H^1_0}.
\]

We choose to write the first order term as in Theorem \ref{thm:m1b} because it provides for an easy comparison to the estimate to follow for the case of the $L^2$ norm used in the AA optimization.
\end{remark}

\begin{remark}
Comparing Theorem \ref{thm:m1b} to Theorems \ref{thm:m1} and \ref{thmm} (with depth $m=1$), the key difference for local convergence is in the first order terms.  While Theorems \ref{thm:m1} and \ref{thmm} scale $\| \nabla w_k \|$ by $\kappa \theta_{k,1}^{H^1_0}$, this result instead scales it by
\[
\kappa {\theta_{k,1}^{L^2}}^{1/2} {\theta_{k,1}^{H^1_0}}^{1/2}
 \frac{\| \alpha^k_{k} w_{k} + \alpha^k_{k-1} w_{k-1} \|}{\| \hat \alpha^k_{k} w_{k} + \hat \alpha^k_{k} w_{k-1} \|} ^{1/2}  \left( \frac{\| w_{k} \|}{C_P \| \nabla w_{k} \| } \right)^{1/2}.
\]
From our numerical testing, the change from $\theta_{k,1}^{H^1_0}$ to $ {\theta_{k,1}^{L^2}}^{1/2} {\theta_{k,1}^{H^1_0}}^{1/2}$ is not consequential (we observed only minor differences in the calculations of these terms).  We also found the term $ \frac{\| \alpha^k_{k} w_{k-1} + \alpha^k_{k-1} w_{k-2} \|}{\| \hat \alpha^k_{k} w_{k-1} + \hat \alpha^k_{k-1} w_{k-2} \|} ^{1/2}$ to not make significant differences numerically; while this ratio is at least 1 (since $\hat\alpha$'s are the optimal $L^2$ coefficients), we find in numerical tests it is generally close to 1.  The term $\left( \frac{\| w_{k-1} \|}{C_P \| \nabla w_{k-1} \| } \right)^{1/2}$, on the other hand, does make a significant difference in our tests: $\| w_{k-1}\|$ is typically an order of magnitude smaller than $\| \nabla w_{k-1}\|$.  Hence  it is this term that makes the first order terms in the bound  of Theorem \ref{thm:m1b} (generally) a significant improvement over those in Theorems \ref{thm:m1} and \ref{thmm}.
\end{remark}

\begin{proof}[Proof of Theorem \ref{thm:m1b}]
The proof begins by defining $e_k, \te_k$ and $w_k$ by 
\begin{align*}
e_k & := u_k - u_{k-1}, \\
\te_k & := \tu_k - \tu_{k-1}, \\
w_k &= \tu_{k} - u_{k-1}.
\end{align*}
The Picard scheme at $j=k-1$ and $j=k$ gives us two equations for finding $\tilde u_k,\ \tilde u_{k-1} \in V_h$, respectively: for all $v\in V_h$,
\begin{align}
\nu (\nabla \tilde u_k,\nabla v) + b^*(u_{k-1},\tilde u_k,v) & = \langle f,v \rangle, \label{Pk} \\
\nu (\nabla \tilde u_{k-1},\nabla v) + b^*(u_{k-2},\tilde u_{k-1},v) & = \langle f,v \rangle. \label{Pkm1}
\end{align}
Multiplying \eqref{Pk} by $\alpha^k_{k}$ and \eqref{Pkm1} by $\alpha^k_{k-1}$ and then adding the equations gives for all $v\in V_h$,
\[
\nu (\nabla u_k,\nabla v) + b^*(u_{k-1},\alpha^k_{k} \tilde u_k,v) + b^*(u_{k-2},\alpha^k_{k-1} \tilde u_{k-1},v)   = \langle f,v \rangle,
\]
by utilizing that $u_k = \alpha^k_{k} \tilde u_k + \alpha^k_{k-1}\tilde u_{k-1}$ and $ \alpha^k_{k} +  \alpha^k_{k-1}=1$.  For the second and third left hand side terms, we use the identity $ab+cd = -(a-c)d + a(b+d)$ to get for all $v\in V_h$,
\begin{equation}
\nu (\nabla u_k,\nabla v) - b^*(e_{k-1},\alpha^k_{k-1} \tilde u_{k-1},v) + b^*(u_{k-1},u_k,v) = \langle f,v \rangle. \label{ZZZ1}
\end{equation}
Subtracting this from the Picard scheme at $j=k+1$ (i.e. \eqref{Pk} but with each subscript increased by 1), we get for all $v\in V_h$,
\begin{equation}
\nu(\nabla (\tilde u_{k+1} - u_k),\nabla v) + b^*(u_{k},\tilde u_{k+1},v) - b^*(u_{k-1},u_k,v) + b^*(\alpha^k_{k-1} e_{k-1},\tilde u_{k-1},v)=0,
\end{equation}
noting we moved the scalar $\alpha^k_{k-1} $ to the first argument of the last $b^*$ term.  Using the identity $ab-cd = a(b-d) + (a-c)d $ on the second and third left hand side terms, we obtain the equation, for all $v\in V_h$,
\begin{equation}
\nu(\nabla w_{k+1},\nabla v) + b^*(u_{k},w_{k+1},v) + b^*(e_{k},u_k,v) + b^*(\alpha^k_{k-1} e_{k-1},\tilde u_{k-1},v)=0.
\end{equation}
Choosing $v = w_{k+1}$  vanishes the second term and after using the identities $ab+cd= a(b-d) + (a+c)d$ and $u_k - \tilde u_{k-1}=\alpha^k_k \tilde e_k$ (note the latter holds since we are in the $m=1$ case) provides us with
\begin{equation}
\nu  \| \nabla w_{k+1} \|^2  
= - b^*(e_k + \alpha^k_{k-1} e_{k-1}, \tilde u_{k-1},w_{k+1}) - b^*(e_k, \alpha^k_{k} \tilde e_k,w_{k+1}). \label{Z2}
\end{equation}
Now using the identity $e_k + \alpha^k_{k-1} e_{k-1}=\alpha^k_{k} w_{k} + \alpha^k_{k-1} w_{k-1}$ (which reveals itself immediately by expanding the $e_j$ and $w_j$ terms using their definitions) and bounding the first right hand side term using \eqref{H1L2bound} and the second with \eqref{bstarbound}, we get the estimate
\begin{multline*}
\| \nabla w_{k+1}\| \le 
M C_P^{-1/2} \nu^{-1} 
\| \alpha^k_{k} w_{k} + \alpha^k_{k-1} w_{k} \|^{1/2} \|\nabla ( \alpha^k_{k} w_{k} + \alpha^k_{k-1} w_{k}) \|^{1/2}  \| \nabla \tilde u_{k-1}\| 
\\ + M\nu^{-1} \kappa | \alpha^k_{k}|  \| \nabla e_{k} \|  \| \nabla e_{k-1} \|, 
\end{multline*}
thanks to $\|\nabla \tilde e_k \| \le \kappa \| \nabla e_{k-1} \|$ which holds due to the Lipschitz continuity of $g'$ by Lemma \ref{gproperties}.    Using that $\| \nabla \tilde u_{k-1}\| \le \nu^{-1}\|f\|_{-1}$ from \eqref{Picbound}, the definition of $\theta_{k,1}^{H^1_0}$, and \eqref{ewbound}, we get 
\begin{multline*}
\| \nabla w_{k+1}\| \le 
 C_P^{-1/2} \kappa
\| \alpha^k_{k} w_{k} + \alpha^k_{k-1} w_{k-1} \|^{1/2} {\theta_{k,1}^{H^1_0}}^{1/2} \|\nabla  w_{k} \|^{1/2}  
 +  K^2 M \nu^{-1} \kappa  \bar{\alpha} \|\nabla w_{k}\| \| \nabla w_{k-1} \|. 
\end{multline*}
Next, multiply and divide the first right hand side term by $\| \hat \alpha^k_{k} w_{k} + \hat \alpha^k_{k-1} w_{k} \|^{1/2}$ and use the definition of $\theta_{k,1}^{L^2}$ to obtain
\begin{multline*}
\| \nabla w_{k+1}\| \le 
 C_P^{-1/2} \kappa
\left( \frac{\| \alpha^k_{k} w_{k} + \alpha^k_{k-1} w_{k-1} \|}{\| \hat \alpha^k_{k} w_{k} + \hat \alpha^k_{k-1} w_{k-1} \|}\right) ^{1/2} {\theta_{k,1}^{L^2}}^{1/2} \| w_{k} \|^{1/2} {\theta_{k,1}^{H^1_0}}^{1/2} \|\nabla  w_{k} \|^{1/2}  
\\ +  K^2 M \nu^{-1}  \kappa  \bar{\alpha} \|\nabla w_{k}\| \| \nabla w_{k-1} \|  . 
\end{multline*}
Finally, multiplying and dividing by  $\|\nabla w_{k} \|^{1/2}$ and rearranging terms produces the result of the theorem:
\begin{multline*}
\| \nabla w_{k+1}\| \le 
  \kappa {\theta_{k,1}^{L^2}}^{1/2} {\theta_{k,1}^{H^1_0}}^{1/2}
\left(  \frac{\| \alpha^k_{k} w_{k} + \alpha^k_{k-1} w_{k-1} \|}{\| \hat \alpha^k_{k} w_{k} + \hat \alpha^k_{k} w_{k-1} \|}\right) ^{1/2}  \left( \frac{\| w_{k} \|}{C_P \| \nabla w_{k} \| } \right)^{1/2}  \|\nabla  w_{k} \|
\\ +  K^2 M\nu^{-1}  \kappa  \bar{\alpha} \|\nabla w_{k}\| \| \nabla w_{k-1} \|  . 
\end{multline*}
\end{proof}

We now extend this improved result to AA-Picard with general $m$.  The proof structure is similar to that of the $m=1$ case, but there are significantly more terms and thus technicalities with the identities needed.  The identities used in the proof are lengthy, and so to keep the proof as clean as possible we move the identity derivations to the Appendix.

\begin{theorem}\label{thmH10m}
Under Assumption \ref{assume} and assuming $0<|\alpha_j^k|<\bar{\alpha} \ \forall k,j$ for some fixed $\bar{\alpha}$ and that the residual differences $\{ w_j - w_{j-1} \}_{j=1,2,...,k}$ are not linearly dependent in the $H^1_0$ inner product, the depth $m$ AA-Picard iterates found using $H^1_0$ as the optimization norm satisfy
%Under Assumption \ref{assume}, suppose that depth $m$ AA-Picard is computed using the $H^1_0$ norm for the AA optimization and that $0<|\alpha_j^k|\le \bar\alpha \ \forall j,k$ for some fixed $\bar\alpha$.  Then the AA-Picard iterates satisfy  
\begin{multline}
\| \nabla w_{k+1} \| \le
\kappa {\theta_{k,m}^{H^1_0}}^{1/2} {\theta_{k,m}^{L^2}}^{1/2} \left(  \frac{ \| \sum_{j=k-m}^k \alpha^k_j w_j \| }{ \| \sum_{j=k-m}^k \hat \alpha^k_j w_j \|  }  \frac{ \| w_k \| }{ C_P \| \nabla w_k\|}\right)^{1/2} \| \nabla w_k \|\\
+
K^2 M \nu^{-1} \kappa \frac{m(m+3)\bar\alpha}{4} \sum_{j=k-m+1}^k \sum_{i=k-m}^{j-1} \| \nabla w_j \| \| \nabla w_i \|.
  \label{picm0}
\end{multline}
\end{theorem}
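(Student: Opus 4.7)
I would follow the structure of the $m=1$ proof of Theorem \ref{thm:m1b}, scaled up to $m+1$ Picard equations. First, I would write the Picard scheme at each stage $j\in\{k-m,\ldots,k\}$:
\[
\nu(\nabla \tilde u_j,\nabla v) + b^*(u_{j-1},\tilde u_j,v) = \langle f,v\rangle\qquad\forall v\in V_h,
\]
multiply the $j$-th equation by $\alpha^k_j$, sum, and use $\sum_j\alpha^k_j=1$ together with $u_k=\sum_j \alpha^k_j \tilde u_j$ to produce a single weak equation for $u_k$ whose nonlinear contribution is $\sum_{j=k-m}^k \alpha^k_j b^*(u_{j-1},\tilde u_j,v)$.

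The main algebraic work is to rewrite this nonlinear sum in the form
\[
b^*\Bigl(\sum_{j=k-m}^k \alpha^k_j w_j,\ \tilde u_{k-m},\ v\Bigr)\;+\;b^*(u_{k-1},u_k,v)\;+\;(\text{cross terms}),
\]
with cross terms of the shape $\alpha^k_j\,b^*(e_i,\tilde e_\ell,v)$ for $i<\ell$. To do this I would use the telescopings $u_{j-1}=u_{k-1}-\sum_{i=j}^{k-1}e_i$ and $\tilde u_j=\tilde u_{k-m}+\sum_{i=k-m+1}^j \tilde e_i$, substitute into each $b^*(u_{j-1},\tilde u_j,v)$, distribute over the $\alpha^k_j$ sum, and then collect the pure $e$-part with the aid of Lemma \ref{weidentities} to recognize the prefactor $\sum_j \alpha^k_j w_j$. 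This generalized identity is the technically involved ingredient, which I would derive in an Appendix exactly as the authors indicate they do.

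Subtracting the assembled equation from Picard at step $k+1$, using $b^*(u_k,\tilde u_{k+1},v)-b^*(u_{k-1},u_k,v)=b^*(u_k,w_{k+1},v)+b^*(e_k,u_k,v)$, and testing with $v=w_{k+1}$ (so that $b^*(u_k,w_{k+1},w_{k+1})$ vanishes by skew-symmetry) yields
\[
\nu \|\nabla w_{k+1}\|^2 = -b^*\Bigl(\sum_{j=k-m}^k \alpha^k_j w_j,\tilde u_{k-m},w_{k+1}\Bigr) - \mathcal R,
\]
where $\mathcal R$ collects the cross terms together with $b^*(e_k,u_k,w_{k+1})$ after one more use of $ab-cd$ identities. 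I would bound the leading term via the sharp estimate \eqref{H1L2bound} combined with \eqref{Picbound} to control $\|\nabla\tilde u_{k-m}\|\le\nu^{-1}\|f\|_{-1}$, and then multiply and divide by $\|\sum_j \hat\alpha^k_j w_j\|^{1/2}$ and $(C_P\|\nabla w_k\|)^{1/2}$ to introduce $(\theta_{k,m}^{L^2}\theta_{k,m}^{H^1_0})^{1/2}$ and the ratio displayed in the statement. Each cross term $b^*(e_i,\alpha^k_j\tilde e_\ell,w_{k+1})$ is bounded via \eqref{bstarbound}, $|\alpha^k_j|\le\bar\alpha$, the Lipschitz bound $\|\nabla\tilde e_\ell\|\le\kappa\|\nabla e_{\ell-1}\|$ coming from Lemma \ref{gproperties}, and Assumption \ref{assume} in the form \eqref{ewbound}, which produces a $K^2 M\nu^{-1}\kappa\bar\alpha\|\nabla w_j\|\|\nabla w_i\|$ contribution per cross term. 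Dividing through by $\nu\|\nabla w_{k+1}\|$ finishes the argument.

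The main obstacle is the middle step: producing an identity that cleanly isolates $\sum_j\alpha^k_j w_j$ in a single slot of $b^*$ (so that the sharp bound \eqref{H1L2bound} can be applied to it and the bounded factor $\tilde u_{k-m}$ sits in another slot), while at the same time keeping the remaining residue as genuinely bilinear cross products in $e$'s and $\tilde e$'s. Matching the combinatorial count of those cross products to the coefficient $m(m+3)/4$ in the stated double sum is purely bookkeeping, but it is the sort of bookkeeping in which off-by-one errors and double-counting are easy to introduce, so the appendix derivation has to be done carefully.
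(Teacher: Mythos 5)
Your proposal is correct and follows essentially the same route as the paper: an $\alpha$-weighted combination of the Picard equations is subtracted from the step $k+1$ equation and tested with $v=w_{k+1}$, the term $b^*\bigl(\sum_{j=k-m}^k\alpha^k_j w_j,\tilde u_{k-m},\cdot\bigr)$ is isolated so that the sharp bound \eqref{H1L2bound} together with \eqref{Picbound} applies and the $\theta^{L^2}_{k,m}$ factor is introduced by multiplying and dividing, and the remaining cross terms in $e_i$, $\tilde e_\ell$ are handled via \eqref{bstarbound}, $\|\nabla\tilde e_\ell\|\le\kappa\|\nabla e_{\ell-1}\|$, and \eqref{ewbound}. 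The only cosmetic difference is that you derive the key identity by telescoping $u_{j-1}$ and $\tilde u_j$ directly, whereas the paper iterates pairwise $ab+cd$ regroupings and then invokes Lemma \ref{weidentities} --- both yield the same decomposition --- and the combinatorial constant you rightly flag as delicate is indeed loose in the original (the paper's proof ends with $m(m+3)\bar\alpha/2$ while the statement reads $m(m+3)\bar\alpha/4$).
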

%
%\begin{remark}
%The general $m$ result from Theorem \ref{thmH10m} is an analogue of the $m=1$ case in Theorem \ref{thm:m1b}.  As expected there is more higher order terms, and the difference in the first order terms are the the $\theta_k$ terms are in terms of the depth $m$, as is the ratio of the AA optimization objective function $H^1_0$ optimal coefficients to that of the $L^2$ optimal coefficients, in the $L^2$ norm. 
%\end{remark}
\begin{remark}
Just as in the $m=1$ case, the gain in the first order terms over Theorem \ref{thmm} comes from the term $ \frac{ \| w_k \| }{ C_P \| \nabla w_k\|}^{1/2}$, as we find  $\| w_k \|$ is typically an order of magnitude smaller than $\| \nabla w_k \|$ in our numerical tests.
%, while our tests also show the ratio $ \frac{ \| \sum_{j=k-m}^k \alpha^k_j w_j \| }{ \| \sum_{j=k-m}^k \hat \alpha^k_j w_j \|  } $ (which must be at least 1) is generally only slightly greater than 1.
\end{remark}
\begin{remark}
There are multiple ways to equivalently write the first order terms in \eqref{picm0}, and we choose this way because it allows for a straightforward comparison to the convergence estimate below when the $L^2$ norm is used in the AA optimization.
\end{remark}

\begin{proof}
We begin the proof analogous to the $m=1$ case, by multiplying the Picard scheme at $j=k,\ k-1, \ ..., k-m$ by $\alpha^k_k,\ \alpha^k_{k-1},\ ..., \alpha^k_{k-m}$, respectively, and summing all $m+1$ equations to get 
\begin{equation}
\nu(\nabla u_k,\nabla v) + \sum_{j=k-m}^k b^*(u_{j-1},\alpha^k_j \tilde u_j,v) = \langle f,v \rangle \ \forall v\in V_h. \label{group0}
\end{equation}

Before combining this equation with the $k+1$ level Picard equation to develop an equation for $w_{k+1}$, we rewrite the nonlinear terms in a particular way so that can be more easily analyzed.  Grouping the last two terms of the sum in \eqref{group0} using $ab+cd = -(a-c)d + a(b+d)$ gives
\begin{multline}
b^*(u_{k-m},\alpha^k_{k-m+1}\tilde u_{k-m+1},v)
+
b^*(u_{k-m-1},\alpha^k_{k-m}\tilde u_{k-m},v)
= \\
-b^*(e_{k-m},\alpha^k_{k-m}\tilde u_{k-m},v)
+
b^*(u_{k-m},\alpha^k_{k-m}\tilde u_{k-m} +\alpha^k_{k-m+1}\tilde u_{k-m+1} ,v). \label{group1}
\end{multline}
Adding in to \eqref{group1} the third to last term of the sum in \eqref{group0} gives, after using $ab+cd = -(a-c)d + a(b+d)$, that
\begin{multline}
 \sum_{j=k-m}^{k-m+2} b^*(u_{j-1},\alpha^k_j \tilde u_j,v)
=
-b^*(e_{k-m},\alpha^k_{k-m}\tilde u_{k-m},v) \\
+ b^*(u_{k-m+1},\alpha^k_{k-m+2}\tilde u_{k-m+2},v)
+b^*(u_{k-m},\alpha^k_{k-m}\tilde u_{k-m} +\alpha^k_{k-m+1}\tilde u_{k-m+1} ,v)
 \\
= -b^*(e_{k-m},\alpha^k_{k-m}\tilde u_{k-m},v) \\
-b^*(e_{k-m+1},\alpha^k_{k-m}\tilde u_{k-m} +\alpha^k_{k-m+1}\tilde u_{k-m+1} ,v)
+  b^*(u_{k-m+1},\sum_{j=k-m}^{k-m+2}  \alpha^k_j  \tilde u_j ,v).
\end{multline}
Repeating this process to eventually add in the rest of the terms from the sum in \eqref{group0} provides us with
\begin{multline}
\sum_{j=k-m}^k b^*(u_{j-1},\alpha^k_j \tilde u_j,v)
= b^*(u_{k-1},\sum_{j=k-m}^{k}  \alpha^k_j  \tilde u_j ,v) \\
-b^*(e_{k-m},\alpha^k_{k-m}\tilde u_{k-m},v)
-b^*(e_{k-m+1}, \sum_{j=k-m}^{k-m+1} \alpha^k_j \tilde u_j ,v) - ... - b^*(e_{k-1}, \sum_{j=k-m}^{k-1} \alpha^k_j \tilde u_j ,v).
\end{multline}
Now using the definition of $u_k$ and reducing gives
\begin{multline}
\sum_{j=k-m}^k b^*(u_{j-1},\alpha^k_j \tilde u_j,v)
= b^*(u_{k-1},u_k ,v) \\
-b^*(e_{k-m},\alpha^k_{k-m}\tilde u_{k-m},v)
-b^*(e_{k-m+1}, \sum_{j=k-m}^{k-m+1} \alpha^k_j \tilde u_j ,v) - ... - b^*(e_{k-1}, \sum_{j=k-m}^{k-1} \alpha^k_j \tilde u_j ,v). \label{group5}
\end{multline}
%Using identities on the $b^*$ terms in \eqref{group0} given in detail in \eqref{group1}-\eqref{group5} in the Appendix, we rewrite \eqref{group0} as
Combining \eqref{group5} with \eqref{group0} provides us with the equation
\begin{multline}
\nu(\nabla u_k,\nabla v) + b^*(u_{k-1},u_k ,v) -b^*(e_{k-m},\alpha^k_{k-m}\tilde u_{k-m},v) \\
-b^*(e_{k-m+1}, \sum_{j=k-m}^{k-m+1} \alpha^k_j \tilde u_j ,v) - ... - b^*(e_{k-1}, \sum_{j=k-m}^{k-1} \alpha^k_j \tilde u_j ,v) = \langle f,v \rangle \ \forall v\in V_h. \label{picm1}
\end{multline}

Next, proceeding as in the $m=1$ case by subtracting \eqref{picm1} from the step $k+1$ Picard scheme (i.e. \eqref{Pk} but with each subscript increased by 1), we get for all $v\in V_h$,
\begin{multline}
\nu(\nabla w_{k+1},\nabla v) + b^*(u_k,w_{k+1},v) + b^*(e_k,u_k,v) + b^*(e_{k-m},\alpha^k_{k-m}\tilde u_{k-m},v)  \\
+ b^*(e_{k-m+1}, \sum_{j=k-m}^{k-m+1} \alpha^k_j \tilde u_j ,v) + ... + b^*(e_{k-1}, \sum_{j=k-m}^{k-1} \alpha^k_j \tilde u_j ,v) = 0. \label{picm2}
\end{multline}
Before choosing $v=w_{k+1}$ and analyzing the residuals, it is helpful to apply some identities to the $b^*$ terms in \eqref{picm2} so they are in a form more acceptable for analysis.  By using that $ b^*(e_k,u_k,v) =  b^*(e_k,\sum_{j=k-m}^{k} \alpha^k_j \tilde u_j ,v)$ by expanding $u_k$, and then regrouping the third through the last terms in \eqref{picm2}, we get the equation
%The following identities are used in the proof of Theorem  \ref{thmH10m}.  
\begin{multline}
\nu(\nabla w_{k+1},\nabla v) + b^*(u_k,w_{k+1},v) + b^*(\alpha^k_k e_k,\tilde u_k,v) + b^*( \alpha^k_{k-1}(e_k + e_{k-1}),\tilde u_{k-1},v) \\
+ 
b^*( \alpha^k_{k-2}(e_k + e_{k-1} + e_{k-2}),\tilde u_{k-2},v)
+ ... +
b^*( \alpha^k_{k-m} \sum_{j=k-m}^k e_j ,\tilde u_{k-m},v)=0 \ \forall v\in V_h.
\label{picm3}
\end{multline}
Combining the third and fourth terms in \eqref{picm3} using $ab+cd= a(b-d)+(a+c)d$, we can write
\begin{multline}
b^*(\alpha^k_k e_k,\tilde u_k,v) + b^*( \alpha^k_{k-1}(e_k + e_{k-1}),\tilde u_{k-1},v)
 \\ =
b^*(\alpha^k_k e_k,\tilde e_k,v) + b^*( ( \alpha_k + \alpha^k_{k-1}) e_k + \alpha^k_{k-1} e_{k-1}),\tilde u_{k-1},v).
\label{picm4}
\end{multline}
Adding the fifth term from \eqref{picm3} to the third and fourth sum from \eqref{picm4} and proceeding again with  $ab+cd= a(b-d)+(a+c)d$ to combine them  yields
\begin{multline}
b^*(\alpha^k_k e_k,\tilde u_k,v) + b^*( \alpha^k_{k-1}(e_k + e_{k-1}),\tilde u_{k-1},v) + b^*( \alpha^k_{k-2}(e_k + e_{k-1} + e_{k-2}),\tilde u_{k-2},v)
 = \\
b^*(\alpha^k_k e_k,\tilde e_k,v) + b^*( ( \alpha_k + \alpha^k_{k-1}) e_k + \alpha^k_{k-1} e_{k-1}),\tilde e_{k-1},v) \\ + b^*( ( \alpha_k + \alpha^k_{k-1} + \alpha^k_{k-2}) e_k  + (\alpha^k_{k-1} + \alpha^k_{k-2}) e_{k-1}) + \alpha^k_{k-2} e_{k-2},\tilde u_{k-2},v).
\label{picm5}
\end{multline}
Continuing this procedure until all the terms (after the second term) from \eqref{picm3} are added in gives us
\begin{multline}
b^*(\alpha^k_k e_k,\tilde u_k,v) + b^*( \alpha^k_{k-1}(e_k + e_{k-1}),\tilde u_{k-1},v) + b^*( \alpha^k_{k-2}(e_k + e_{k-1} + e_{k-2}),\tilde u_{k-2},v)
\\
+ ... +
b^*( \alpha^k_{k-m} \sum_{j=k-m}^k e_j ,\tilde u_{k-m},v)  
= b^*(\alpha^k_k e_k,\tilde e_k,v) + b^*( ( \alpha^k_k + \alpha^k_{k-1}) e_k + \alpha^k_{k-1} e_{k-1}),\tilde e_{k-1},v) \\ 
+ b^*( ( \alpha^k_k + \alpha^k_{k-1} + \alpha^k_{k-2}) e_k  + (\alpha^k_{k-1} + \alpha^k_{k-2}) e_{k-1}) + \alpha^k_{k-2} e_{k-2},\tilde e_{k-2},v) \\
+ ... + 
b^*\left( \sum_{j=k-m+1}^{k} \alpha^k_j e_k  +  \sum_{j=k-m+1}^{k-1} \alpha^k_j e_{k-1} + \sum_{j=k-m+1}^{k-2} \alpha_j e_{k-2}+ ... + \alpha^k_{k-m+1} e_{k-m+1},\tilde e_{k-m+1},v\right)
\\ + 
b^*\left( \sum_{j=k-m}^k \alpha^k_j e_k  +  \sum_{j=k-m}^{k-1} \alpha^k_j e_{k-1} + \sum_{j=k-m}^{k-2} \alpha^k_j e_{k-2}+ ... + \alpha^k_{k-m} e_{k-m},\tilde u_{k-m},v\right).
\label{picm6}
\end{multline}
%
%
%
%Again using identities on the $b^*$ terms (shown in detail in \eqref{picm3}-\eqref{picm6} in the Appendix), we obtain
Using \eqref{picm6} in \eqref{picm2} gives us for all $v\in V_h$ that
\begin{multline}
\nu(\nabla w_{k+1},\nabla v) + b^*(u_k,w_{k+1},v) 
+ b^*(\alpha^k_k e_k,\tilde e_k,v) + b^*( ( \alpha^k_k + \alpha^k_{k-1}) e_k + \alpha^k_{k-1} e_{k-1}),\tilde e_{k-1},v) \\ 
+ b^*( ( \alpha^k_k + \alpha^k_{k-1} + \alpha^k_{k-2}) e_k  + (\alpha^k_{k-1} + \alpha^k_{k-2}) e_{k-1}) + \alpha^k_{k-2} e_{k-2},\tilde e_{k-2},v) \\
+ ... + 
b^*\left( \sum_{j=k-m+1}^{k} \alpha^k_j e_k  +  \sum_{j=k-m+1}^{k-1} \alpha^k_j e_{k-1} + \sum_{j=k-m+1}^{k-2} \alpha_j e_{k-2}+ ... + \alpha^k_{k-m+1} e_{k-m+1},\tilde e_{k-m+1},v\right)
\\ + 
b^*\left(  e_k  +  \sum_{j=k-m}^{k-1} \alpha^k_j e_{k-1} + \sum_{j=k-m}^{k-2} \alpha^k_j e_{k-2}+ ... + \alpha^k_{k-m} e_{k-m},\tilde u_{k-m},v\right)=0,
\label{picm7}
\end{multline}
where in the last term we used on the $e_k$ coefficient that $\sum_{j=k-m}^k \alpha^k_j=1$.  Choosing $v=w_{k+1}$ vanishes the second left hand side term, and after applying \eqref{bstarbound} to all $b^*$ terms except that last one where we apply Lemma \ref{weidentities}, we get 
%\eqref{H1L2bound}, we get
\begin{multline}
\nu \| \nabla w_{k+1} \| \le 
M | \alpha_k^k| \| \nabla e_k \| \| \nabla \tilde e_k \| 
+ M ( | \alpha_k^k + \alpha^k_{k-1} |  \| \nabla e_k \| + | \alpha_k^k|  \|\nabla e_{k-1} \|) \| \nabla \tilde e_{k-1} \|  + ... \\
+ M ( | \sum_{j=k-m+1}^{k} \alpha^k_j  |  \| \nabla e_k \| + | \sum_{j=k-m+1}^{k-1} \alpha^k_j |  \| \nabla e_{k-1}\| + ... + | \alpha^k_{k-m+1} | \| \nabla e_{k-m+1} \| ) \| \nabla \tilde e_{k-m+1} \| \\
+ \bigg| b^*\left(  \sum_{j=k-m}^k \alpha_j^k w_j,\tilde u_{k-m},v\right) \bigg|.
%+ M C_P^{-1} \| \sum_{j=k-m}^k \alpha^k_j e_k  +  \sum_{j=k-m}^{k-1} \alpha^k_j e_{k-1} + \sum_{j=k-m}^{k-2} \alpha^k_j e_{k-2}+ ... + \alpha^k_{k-m} e_{k-m} \| \| \nabla \tilde u_{k-m} \|.
\end{multline}
Next, we utilize that $\| \nabla \tilde e_j \| \le \kappa \| \nabla e_{j-1} \|$ (which holds due to Lemma \ref{gproperties}) and apply \eqref{H1L2bound} to the last term on the right hand side to obtain the bound
%the stability of the Picard iteration \eqref{Picbound} that gives $\| \nabla \tilde u_j \|\le \nu^{-1} \| f \|_{-1}$ and Lemma \ref{weidentities} on the last term on the right hand sideto get
\begin{multline}
\nu \| \nabla w_{k+1} \| \le 
M \kappa \bigg(  | \alpha_k^k| \| \nabla e_k \| \| \nabla e_{k-1} \| 
+  ( | \alpha_k^k + \alpha^k_{k-1} |  \| \nabla e_k \| + | \alpha_k^k|  \|\nabla e_{k-1} \|) \| \nabla e_{k-2} \| \\
+ ... +  ( | \sum_{j=k-m+1}^{k} \alpha^k_j  |  \| \nabla e_k \| + | \sum_{j=k-m+1}^{k-1} \alpha^k_j |  \| \nabla e_{k-1}\| + ... + | \alpha^k_{k-m+1} | \| \nabla e_{k-m+1} \| ) \| \nabla e_{k-m} \| \bigg)  \\
+ M C_P^{-1/2}   \| \sum_{j=k-m}^k \alpha^k_j w_j \| ^{1/2} \| \sum_{j=k-m}^k \alpha^k_j \nabla w_j \| ^{1/2} \| \nabla \tilde u_{k-m} \|. \label{picm9}
\end{multline}
Because  $\sum_{j=k-m}^k \alpha^k_j=1$, each coefficient that is an absolute value of a sum of $\alpha$'s is bounded by $\frac{(m+3)\bar\alpha }{2}$.  Using this together with the stability of the Picard iteration \eqref{Picbound} that gives $\| \nabla \tilde u_{k-m} \|\le \nu^{-1} \| f \|_{-1}$, multiplying both sides by $\nu^{-1}$ and applying the definition of $\theta_{k,m}^{H^1_0}$ yields the bound
\begin{multline}
\| \nabla w_{k+1} \| \le 
M \nu^{-1} \kappa \frac{(m+3)\bar\alpha}{2}  \bigg(  \|  \nabla e_k \| \| \nabla e_{k-1} \| 
+  (   \| \nabla e_k \| +  \|\nabla e_{k-1} \|) \| \nabla e_{k-2} \| \\
+ ... +  (  \| \nabla e_k \| +   \| \nabla e_{k-1}\| + ... +  \| \nabla e_{k-m+1} \| ) \| \nabla e_{k-m} \| \bigg)  
+ \kappa C_P^{-1/2} {\theta_{k,m}^{H^1_0}}^{1/2}  \| \nabla w_k \|^{1/2} \| \sum_{j=k-m}^k \alpha^k_j w_j \| ^{1/2}. \label{picm10}
\end{multline}
Grouping terms and multiplying the last right hand side term by 
\[
1 = \frac{ \| \sum_{j=k-m}^k \hat \alpha^k_j w_j \| ^{1/2}}{ \| \sum_{j=k-m}^k \hat \alpha^k_j w_j \| ^{1/2}}
= \frac{ {\theta_{k,m}^{L^2}}^{1/2} \| w_k \| ^{1/2}}{ \| \sum_{j=k-m}^k \hat \alpha^k_j w_j \| ^{1/2}}
\]
gives us that
\begin{multline}
\| \nabla w_{k+1} \| \le
\kappa {\theta_{k,m}^{H^1_0}}^{1/2} {\theta_{k,m}^{L^2}}^{1/2} \left(  \frac{ \| \sum_{j=k-m}^k \alpha^k_j w_j \| }{ \| \sum_{j=k-m}^k \hat \alpha^k_j w_j \|  }  \frac{ \| w_k \| }{ C_P \| \nabla w_k\|}\right)^{1/2} \| \nabla w_k \|\\
+
M \nu^{-1} \kappa \frac{m(m+3)\bar\alpha}{2} \sum_{j=k-m+1}^k \sum_{i=k-m}^{j-1} \| \nabla e_i \| \| \nabla e_j \|.
  \label{picm11}
\end{multline}
Finally, applying \eqref{ewbound} produces
\begin{multline}
\| \nabla w_{k+1} \| \le
\kappa {\theta_{k,m}^{H^1_0}}^{1/2} {\theta_{k,m}^{L^2}}^{1/2} \left(  \frac{ \| \sum_{j=k-m}^k \alpha^k_j w_j \| }{ \| \sum_{j=k-m}^k \hat \alpha^k_j w_j \|  }  \frac{ \| w_k \| }{ C_P \| \nabla w_k\|}\right)^{1/2} \| \nabla w_k \|\\
+
K^2 M \nu^{-1} \kappa \frac{m(m+3)\bar\alpha}{2}\sum_{j=k-m+1}^k \sum_{i=k-m}^{j-1} \| \nabla w_j  \| \| \nabla w_j \|,
  \label{picm12}
\end{multline}
which completes the proof.
\end{proof}

\subsection{The case of AA optimization in the $L^2$ norm}

We now consider convergence of AA-Picard in the case when the $L^2$ norm is used for the AA optimization.  This causes two key differences compared to the $H^1_0$ optimization case studied above.  First, $\theta_{k,m}^{L^2}$ defined in \eqref{thetaL2} is now the natural gain of the optimization problem.  However, due to the way the nonlinearity was analyzed in the $H^1_0$ optimization norm case as well as the $L^2$ optimization norm case below, both $\theta_{k,m}^{L^2}$ and $\theta_{k,m}^{H^1_0}$ appear as coefficients of the first order term in both of the convergence estimates.  

The second difference is that \eqref{ewbound} is no longer implied by Assumption \ref{assume}.  Instead, such an implication would hold only in the $L^2$ norm.  Hence we reformulate in the following way, beginning from Assumption \ref{assume} and rewriting it as
\[
\tilde \sigma \| e_j \| \le \| w_{j+1}-w_j \| \mbox{  where } \tilde \sigma = \sigma \min_j \bigg\{  \frac{ \| \nabla e_j\|}{\| e_j \|} \frac{ \| w_{j+1} - w_j  \|}{ \| \nabla (w_{j+1}-w_j) \| } \bigg\}.
\]
While we expect $\sigma$ and $\tilde \sigma$ to be similar due to the average of the ratios of a $H^1_0$ to $L^2$ norm for a term multiplied by the ratio of a $L^2$ to $H^1_0$ norm of a different term to be near 1, we cannot rule out extreme cases where $\sigma$ and $\tilde \sigma$ vary significantly.  Hence we assume that $\tilde \sigma>0$.  %Provided $\tilde\sigma>0$, far from the root, minor differences between $\sigma$ and $\tilde \sigma$ can be of consequential but close to the root such differences will not play a role in the convergence since they only affect the higher order terms.  Thus, now using the theory from \cite{PR21}, under Assumption \ref{assume} we have that 
Note that $\tilde \sigma$ only affects the higher order terms in the convergence estimate below.
\begin{equation}
 \| \nabla e_j \| \le \tilde K \| \nabla w_{j} \|, \label{ewbound2}
\end{equation}
where $\tilde K$ depends on $\tilde \sigma$, $\theta_k^{L^2}$, and the degree to which the elements of $\{ (w_{k} - w_{k-1}), ... , (w_{k-m}-w_{k-m-1}) \}$ are linearly independent with respect to the $L^2$ inner product \cite{PR21}.

\begin{theorem}[Convergence result for $L^2$ norm in AA optimization] 
\label{thm:m1L2}
Under Assumption \ref{assume} and assuming $\tilde\sigma>0$, $0<|\hat \alpha_j^k|<\bar{\hat \alpha} \ \forall k,j$ for some fixed $\bar{\hat \alpha}$, and that the residual differences $\{ w_j - w_{j-1} \}_{j=1,2,...,k}$ are not linearly dependent in the $L^2$ inner product, the depth $m$ AA-Picard iterates found using $L^2$ as the optimization norm satisfy
\begin{multline}
\| \nabla w_{k+1} \| \le
\kappa {\theta_{k,m}^{L^2}}^{1/2}  {\theta_{k,m}^{H^1_0}}^{1/2}  \left(  \frac{ \| \sum_{j=k-m}^k \hat \alpha^k_j \nabla w_j \| }{ \| \sum_{j=k-m}^k \alpha^k_j \nabla w_j \|  } \right)^{1/2} \left( \frac{ \| w_k \| }{ C_P \| \nabla w_k\|}\right)^{1/2} \| \nabla w_k \|\\
+
\tilde K^2 M \nu^{-1} \kappa \frac{m(m+3)\bar{\hat \alpha}}{2} \sum_{j=k-m+1}^k \sum_{i=k-m}^{j-1} \| \nabla w_i \| \| \nabla w_j \|.
  \label{L22}
\end{multline}
\end{theorem}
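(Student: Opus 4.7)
The plan is to mirror the proof of Theorem \ref{thmH10m} almost line for line, with the essential bookkeeping change that the iterates $u_{k+1}$ in Algorithm \ref{alg:anderson} are now defined via the $L^2$-optimal coefficients $\hat\alpha_j^k$ rather than the $H^1_0$-optimal coefficients $\alpha_j^k$. Consequently, everywhere the derivation of Theorem \ref{thmH10m} uses the relation $u_k=\sum_{j=k-m}^{k}\alpha_j^k \tilde u_j$ (in writing \eqref{group0} and in combining the $b^*$ terms in \eqref{picm3}--\eqref{picm7}), we will instead use $u_k=\sum_{j=k-m}^{k}\hat\alpha_j^k \tilde u_j$. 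The algebraic grouping identities (derived using $ab+cd=a(b-d)+(a+c)d$) and Lemma \ref{weidentities} hold verbatim with $\hat\alpha$'s in place of $\alpha$'s, since the proof of Lemma \ref{weidentities} only uses the partition-of-unity constraint $\sum \hat\alpha_j^k=1$, which is still enforced by the optimization.

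Carrying out this substitution yields the analog of \eqref{picm7} after testing with $v=w_{k+1}$: the second term vanishes by skew-symmetry, the middle $b^*$ terms (bounded by \eqref{bstarbound}) produce the sum $\sum_{j=k-m+1}^{k}\sum_{i=k-m}^{j-1}\|\nabla e_i\|\|\nabla e_j\|$ with coefficients no larger than $\tfrac{(m+3)\bar{\hat\alpha}}{2}$, and the final term of the form $b^*(\sum_{j=k-m}^{k}\hat\alpha_j^k w_j,\tilde u_{k-m},w_{k+1})$ is bounded using \eqref{H1L2bound} to produce the factor $\|\sum \hat\alpha_j^k w_j\|^{1/2}\|\nabla\sum \hat\alpha_j^k w_j\|^{1/2}\|\nabla \tilde u_{k-m}\|$. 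Applying the stability bound \eqref{Picbound} to $\tilde u_{k-m}$ and dividing by $\nu$ converts this to $\kappa C_P^{-1/2}\|\sum \hat\alpha_j^k w_j\|^{1/2}\|\nabla\sum \hat\alpha_j^k w_j\|^{1/2}$.

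Now the definition \eqref{thetaL2} of $\theta_{k,m}^{L^2}$ converts $\|\sum\hat\alpha_j^k w_j\|^{1/2}$ directly into $(\theta_{k,m}^{L^2})^{1/2}\|w_k\|^{1/2}$. To recover a $\theta_{k,m}^{H^1_0}$ factor (the crucial symmetric trick with the $H^1_0$ case), I will multiply and divide the remaining $\|\nabla\sum\hat\alpha_j^k w_j\|^{1/2}$ by $\|\nabla\sum\alpha_j^k w_j\|^{1/2}=(\theta_{k,m}^{H^1_0})^{1/2}\|\nabla w_k\|^{1/2}$, producing the ratio $\bigl(\|\sum\hat\alpha_j^k\nabla w_j\|/\|\sum\alpha_j^k\nabla w_j\|\bigr)^{1/2}$, which is the exact first-order coefficient in \eqref{L22}. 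Finally, the higher-order sum is converted from $\|\nabla e_j\|$ to $\|\nabla w_j\|$ using \eqref{ewbound2} (in place of \eqref{ewbound}), which is exactly why the revised assumption $\tilde\sigma>0$ and the constant $\tilde K$ were introduced.

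The main obstacle, and the reason this is not simply a one-line relabeling of Theorem \ref{thmH10m}, is that the nonlinear-term bound \eqref{H1L2bound} intrinsically produces a $\|\cdot\|$ factor on its \emph{first argument}, which in the $L^2$-optimization case lines up naturally with $\theta_{k,m}^{L^2}$ but leaves a surplus $\|\nabla\sum\hat\alpha_j^k w_j\|^{1/2}$ factor that has no natural $L^2$-optimality interpretation. The ratio-insertion step is what turns this mismatch into the observed near-symmetry between the $L^2$ and $H^1_0$ estimates; writing the result in the form of \eqref{L22} requires carefully tracking which combination of $\alpha$'s and $\hat\alpha$'s appears at each step so that both $(\theta_{k,m}^{L^2})^{1/2}$ and $(\theta_{k,m}^{H^1_0})^{1/2}$ emerge cleanly in the final estimate.
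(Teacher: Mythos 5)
Your proposal is correct and follows essentially the same route as the paper: trace the proof of Theorem \ref{thmH10m} with $\hat\alpha$'s in place of $\alpha$'s (using only the partition-of-unity constraint), bound the final nonlinear term with \eqref{H1L2bound} and \eqref{Picbound}, convert $\|\sum\hat\alpha_j^k w_j\|^{1/2}$ via the definition of $\theta_{k,m}^{L^2}$, insert the ratio $\bigl(\|\sum\hat\alpha_j^k\nabla w_j\|/\|\sum\alpha_j^k\nabla w_j\|\bigr)^{1/2}$ to extract $(\theta_{k,m}^{H^1_0})^{1/2}$, and handle the higher-order terms with \eqref{ewbound2} and $\tilde K$ in place of \eqref{ewbound} and $K$. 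This matches the paper's argument step for step, including the key ratio-insertion trick.
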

\begin{remark}
The convergence estimates for AA-Picard for the case of $L^2$ and $H^1_0$ norms are very similar.  For the first order terms, the difference is a single term difference: a scaling by $ \left(  \frac{ \| \sum_{j=k-m}^k \hat \alpha^k_j \nabla w_j \| }{ \| \sum_{j=k-m}^k \alpha^k_j \nabla w_j \|  } \right)^{1/2}$ (the ratio loss in the $H^1_0$ norm of using $L^2$ optimal coefficients instead of $H^1_0$ coefficients in the objective function) for the $L^2$ case  versus  $ \left(  \frac{ \| \sum_{j=k-m}^k  \alpha^k_j  w_j \| }{ \| \sum_{j=k-m}^k \hat \alpha^k_j w_j \|  } \right)^{1/2}$ (the ratio loss in the $L^2$ norm of using $H^1_0$ optimal coefficients instead of $L^2$ coefficients in the objective function).  Our numerical tests show these two quantities are on average quite close.

The difference in the higher order terms between the $H^1_0$ and $L^2$  optimization norm cases are the constants $K$ versus $\tilde K$ and $\bar \alpha$ versus $\bar{\hat \alpha}$, respectively.  
%There is no reason to expect significant differences between these constants, and in our numerical tests we observe they are of similar magnitudes. 
%Since these are scaling only the higher order terms, any differences would not be consequential when near the root.
\end{remark}

\begin{proof}
The proof begins the exact same way as the proof of Theorem \ref{thmH10m}, with the exception that $\hat \alpha$'s are used instead of $\alpha$'s, since the $\hat \alpha$'s are the coefficients from the $L^2$ optimization.  Since $\sum_{j=k-m}^k \hat \alpha_j=1$, we can trace the proof of Theorem \ref{thmH10m} all the way down to  \eqref{picm9} to obtain
\begin{multline}
\nu \| \nabla w_{k+1} \| \le 
M \kappa \bigg(  | \hat\alpha_k^k| \| \nabla e_k \| \| \nabla e_{k-1} \| 
+  ( | \hat\alpha_k^k + \hat\alpha^k_{k-1} |  \| \nabla e_k \| + | \hat\alpha_k^k|  \|\nabla e_{k-1} \|) \| \nabla e_{k-2} \| \\
+ ... +  ( | \sum_{j=k-m+1}^{k} \hat\alpha^k_j  |  \| \nabla e_k \| + | \sum_{j=k-m+1}^{k-1} \hat\alpha^k_j |  \| \nabla e_{k-1}\| + ... + | \hat\alpha^k_{k-m+1} | \| \nabla e_{k-m+1} \| ) \| \nabla e_{k-m} \| \bigg)  \\
+ M C_P^{-1/2}   \| \sum_{j=k-m}^k \hat\alpha^k_j w_j \| ^{1/2} \| \sum_{j=k-m}^k \hat\alpha^k_j \nabla w_j \| ^{1/2} \| \nabla \tilde u_{k-m} \|. \label{L21}
\end{multline}
For the last term in \eqref{L21}, we first use the definition of $\theta_{k,m}^{L^2}$ and that $\| \nabla u_{k-m} \|\le \nu^{-1} \| f\|_{-1}$ by Lemma \ref{gproperties} to get
\begin{align}
M C_P^{-1/2}  & \| \sum_{j=k-m}^k \hat\alpha^k_j w_j \| ^{1/2} \| \sum_{j=k-m}^k \hat\alpha^k_j \nabla w_j \| ^{1/2} \| \nabla \tilde u_{k-m} \| \nonumber \\
& \le \kappa \nu C_P^{-1/2} \theta_{k,m}^{L^2} \|  w_k \| ^{1/2} \| \sum_{j=k-m}^k \hat\alpha^k_j \nabla w_j \| ^{1/2} \nonumber \\
& \le \kappa \nu C_P^{-1/2} \theta_{k,m}^{L^2} \|  w_k \| ^{1/2}  \frac{ \| \sum_{j=k-m}^k \hat\alpha^k_j \nabla w_j \|}{ \| \sum_{j=k-m}^k \alpha^k_j \nabla w_j \|}^{1/2}  \| \sum_{j=k-m}^k \alpha^k_j \nabla w_j \|^{1/2},
\end{align}
where on the last step we multiplied by $1= \frac{ \| \sum_{j=k-m}^k \alpha^k_j \nabla w_j \|}{ \| \sum_{j=k-m}^k \alpha^k_j \nabla w_j \|}^{1/2}.$  Now using the  definition of $\theta_{k,m}^{H^1_0}$ and rearranging terms, we have the bound
\begin{multline}
M C_P^{-1/2}   \| \sum_{j=k-m}^k \hat\alpha^k_j w_j \| ^{1/2} \| \sum_{j=k-m}^k \hat\alpha^k_j \nabla w_j \| ^{1/2} \| \nabla \tilde u_{k-m} \|  
\le \\
\kappa \nu  \theta_{k,m}^{L^2} \theta_{k,m}^{H^1_0} \| \nabla w_k \|  \frac{\|  w_k \|}{C_P \| \nabla w_k \|}^{1/2}  \frac{ \| \sum_{j=k-m}^k \hat\alpha^k_j \nabla w_j \|}{ \| \sum_{j=k-m}^k \alpha^k_j \nabla w_j \|}^{1/2}.  
\end{multline}

For the remaining right hand side terms of \eqref{L21}, we proceed just as in the proof of Theorem \ref{thmH10m} to bound these terms, but using $\tilde K$ from \eqref{ewbound2} instead of $K$ from \eqref{ewbound}, and $\bar{ \hat \alpha}$ as a bound on the max absolute value of the $\hat \alpha^k_j$'s.  With these bounds, we reduce \eqref{L21} to 
\begin{multline}
\| \nabla w_{k+1} \| \le
\kappa {\theta_{k,m}^{L^2}}^{1/2}  {\theta_{k,m}^{H^1_0}}^{1/2}  \left(  \frac{ \| \sum_{j=k-m}^k \hat \alpha^k_j \nabla w_j \| }{ \| \sum_{j=k-m}^k \alpha^k_j \nabla w_j \|  }  \frac{ \| w_k \| }{ C_P \| \nabla w_k\|}\right)^{1/2} \| \nabla w_k \|\\
+
\tilde K^2 M \nu^{-1} \kappa \frac{m(m+3)\bar{\hat \alpha}}{2} \sum_{j=k-m+1}^k \sum_{i=k-m}^{j-1} \| \nabla w_i \| \| \nabla w_j \|.
  \label{L22a}
\end{multline}
This completes the proof.
\end{proof}

\section{Numerical Tests}

We give results in this section for three numerical tests that illustrate our theory and compare the various methods.  For each test, we compare convergence of AA-Picard for NSE with both $H^1_0$ and $L^2$ norms used for the AA optimization problem, which our theory shows will have similar convergence behavior even though $H^1_0$ is the norm of the associated Hilbert space of the iteration.  Additionally, we compare AA-Picard results for the commonly used  $\ell^2$ norm and the  diagonally lumped $L^2$ norm (same as $L^2$ norm except using a diagonally lumped mass matrix in the norm calculation).

We test the proposed methods on the 2D channel flow past a cylinder, 2D driven cavity, and the 3D driven cavity, each on three successively refined meshes.  For all problems, the initial guess is $u_0=0$ in the interior but satisfying boundary conditions, and no continuation methods are used in any of our tests.  All tests utilize pointwise divergence-free Scott-Vogelius mixed finite elements.  Our tests use varying $m$ and $Re$, and when $m=0$ we refer to AA-Picard simply as Picard.  The stopping criteria is when the nonlinear residual falls below $10^{-8}$ in the $H^1_0$ norm.  Results from \cite{PR21,EPRX20} show that for AA-Picard for NSE, larger $m$ is more effective as $Re$ gets larger.

\subsection{2D channel flow past a cylinder}

\begin{figure}[ht]
\center
\includegraphics[width = .85\textwidth, height=.17\textwidth,viewport=170 20 1230 250, clip]{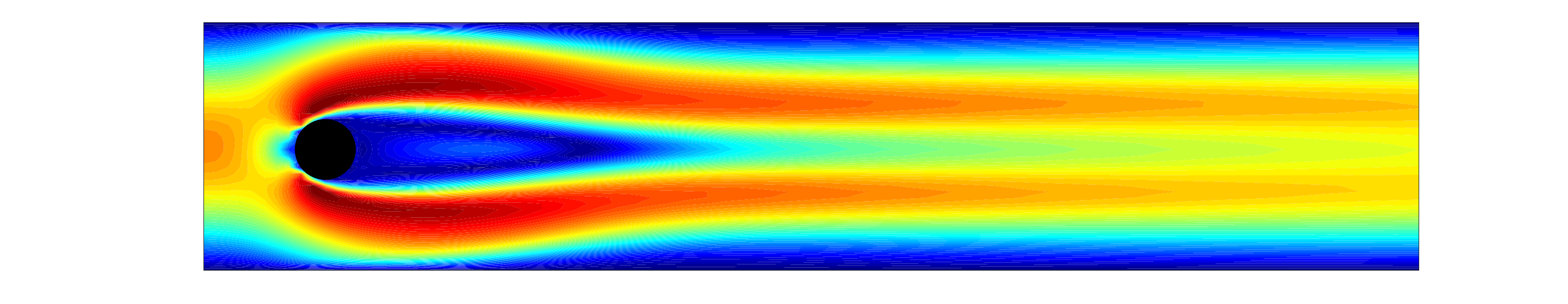}\\
\includegraphics[width = .85\textwidth, height=.17\textwidth,viewport=170 20 1230 250, clip]{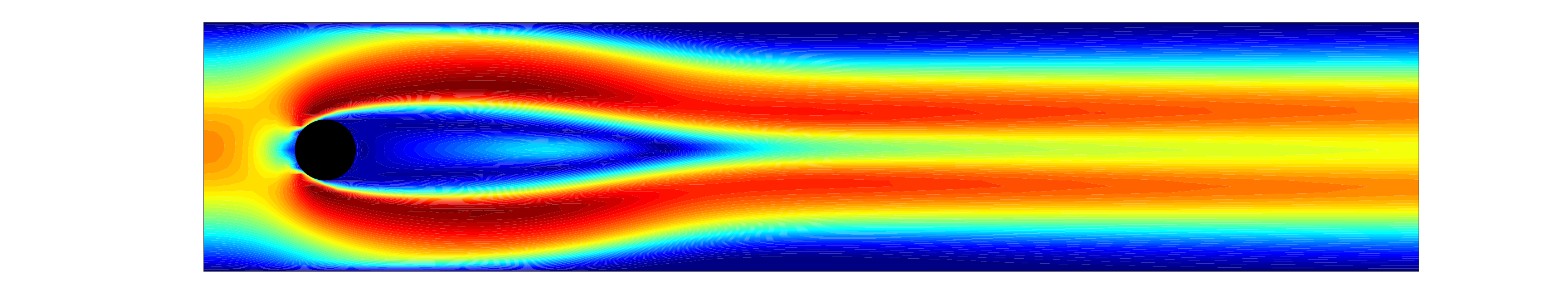}\\
\caption{\label{cylplot} Shown above are steady solution speed contours for the 2D flow past a cylinder at $Re$=100 (top) and 200 (bottom).}
\end{figure}

For our first experiment, we test AA-Picard with different choices of AA optimization norm on channel flow past a cylinder. Many studies on this benchmark test can be found in the literature, e.g. \cite{ST96,J04}, and it is an interesting test because as the Reynolds number increases the physics of the problem get more complex and nonlinear solvers have more trouble converging.  The domain is a $2.2\times0.41$ rectangular channel, with a cylinder of diameter  $0.1$ centered at $(0.2, 0.2)$ (with the origin set as the bottom left corner of the rectangle).

\begin{figure}[ht]
\center
\includegraphics[width = .85\textwidth, height=.17\textwidth,viewport=100 160 500 240, clip]{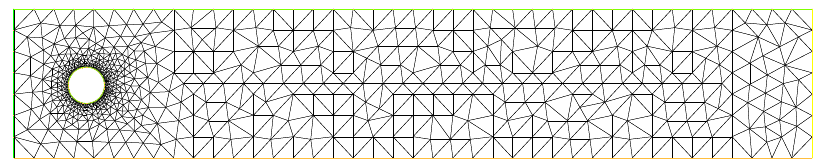}\\
\includegraphics[width = .85\textwidth, height=.17\textwidth,viewport=100 160 500 240, clip]{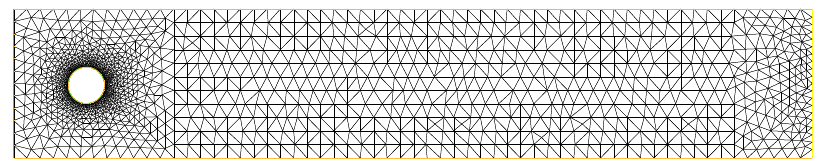}\\
\includegraphics[width = .85\textwidth, height=.17\textwidth,viewport=100 160 500 240, clip]{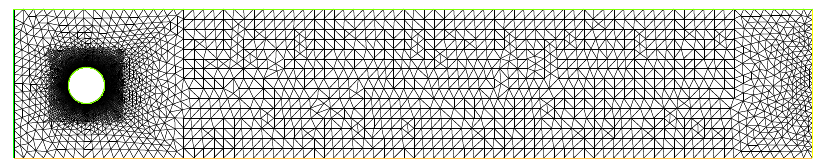}\\
\caption{\label{cylmesh} Shown above are the meshes used for the 2D flow past a cylinder, before the barycenter refinement is applied.}
\end{figure}

%\begin{figure}[ht]
%\center
%\includegraphics[width = .32\textwidth, height=.32\textwidth,viewport=0 0 550 430, clip]{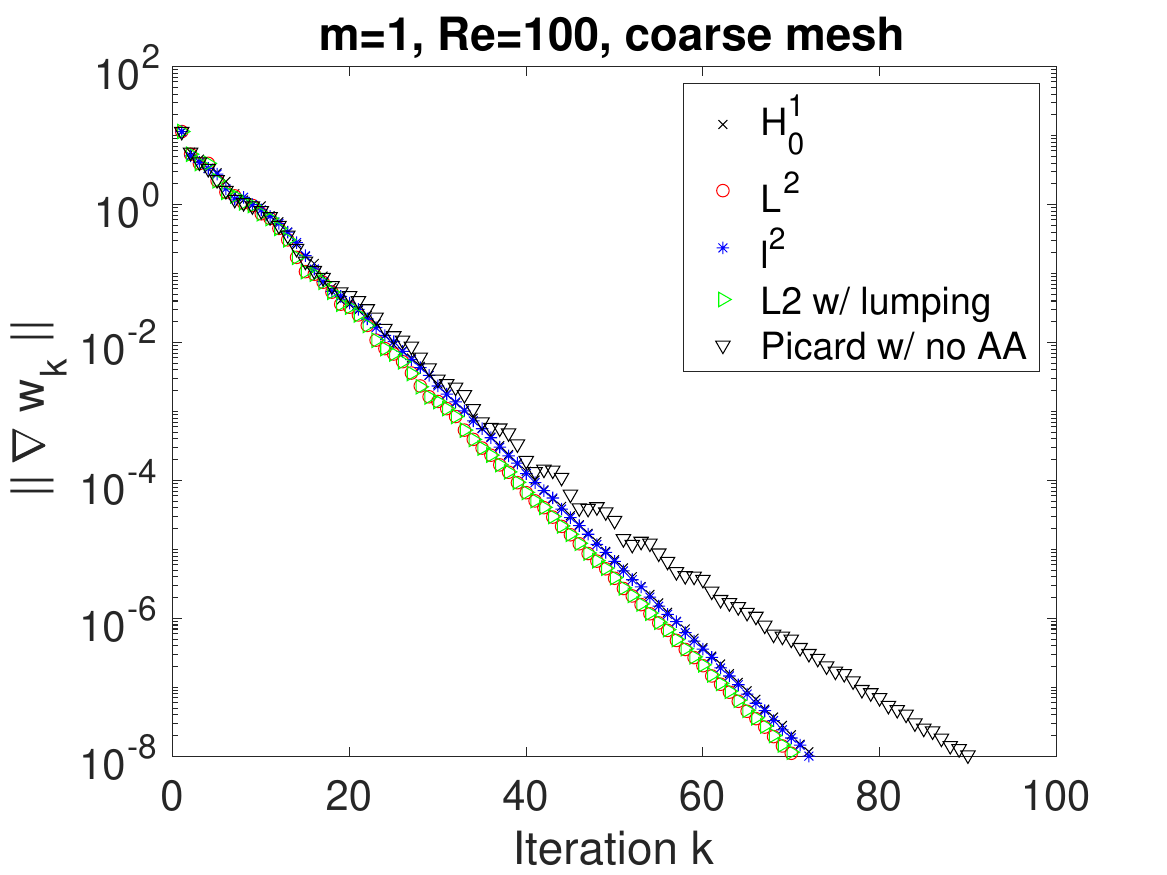}
%\includegraphics[width = .32\textwidth, height=.32\textwidth,viewport=0 0 550 430, clip]{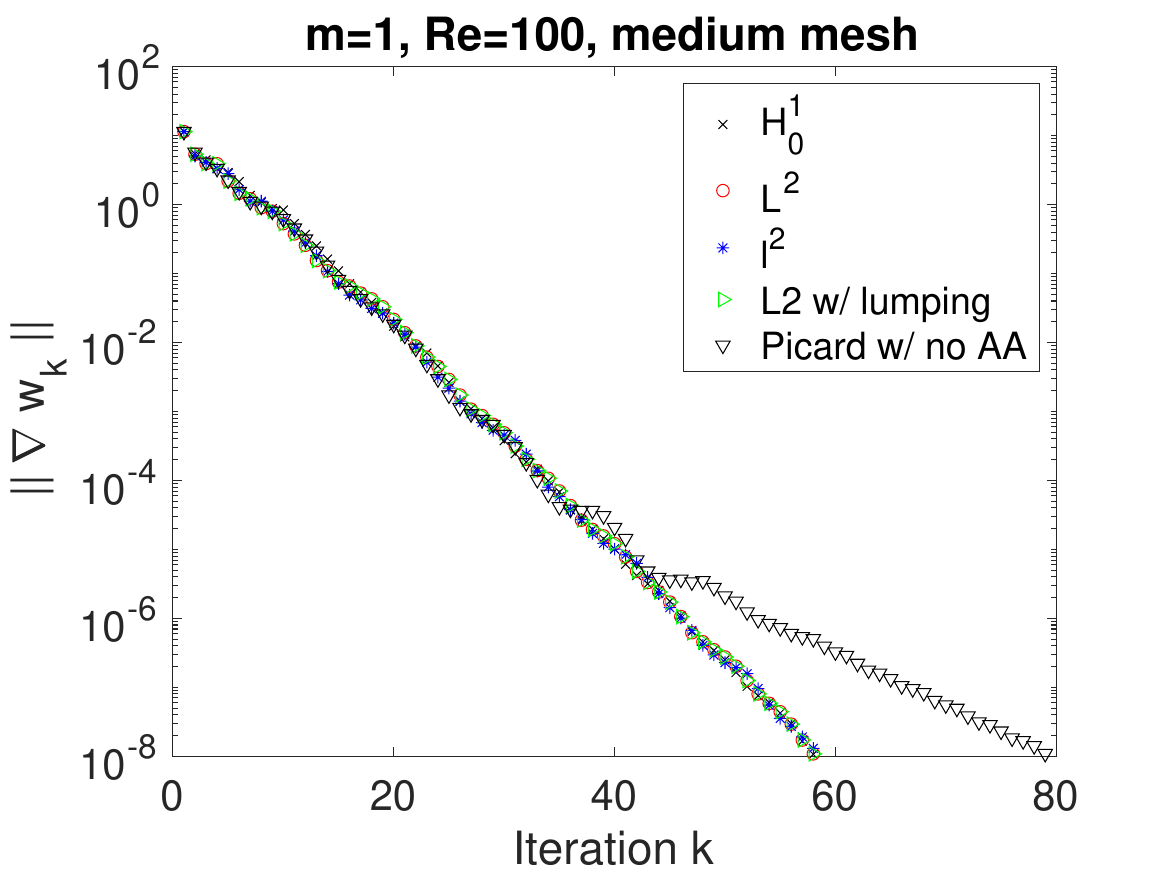}
%\includegraphics[width = .32\textwidth, height=.32\textwidth,viewport=0 0 550 430, clip]{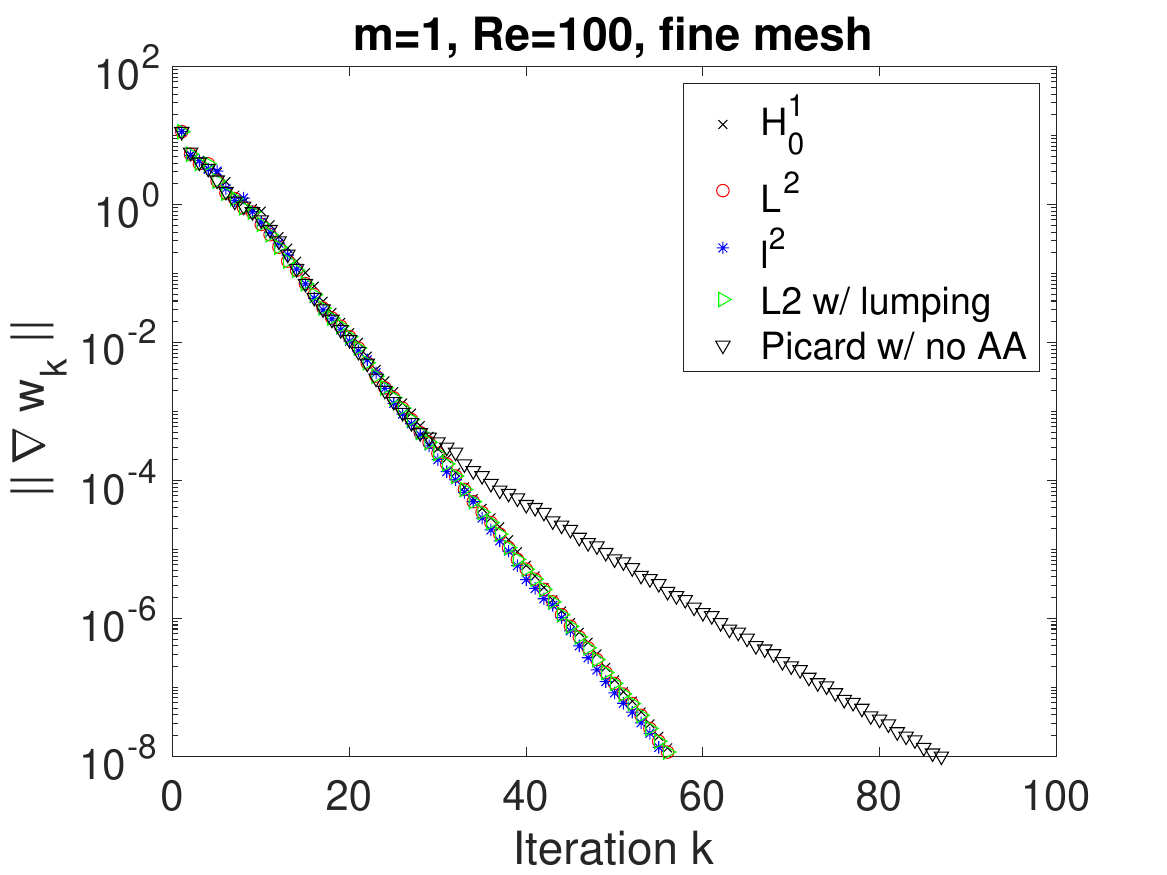}
%\caption{\label{cylconv1} Shown above are the convergence results for Re=100 channel flow past a cylinder, for varying meshes and with $m=1$ AA using varying choices of optimization norm.}
%\end{figure}

\begin{figure}[ht]
\center
\includegraphics[width = .32\textwidth, height=.32\textwidth,viewport=0 0 550 430, clip]{cyl100c.pdf}
\includegraphics[width = .32\textwidth, height=.32\textwidth,viewport=0 0 550 430, clip]{cyl100m.pdf}
\includegraphics[width = .32\textwidth, height=.32\textwidth,viewport=0 0 550 430, clip]{cyl100f.pdf}\\
\includegraphics[width = .32\textwidth, height=.32\textwidth,viewport=0 0 550 430, clip]{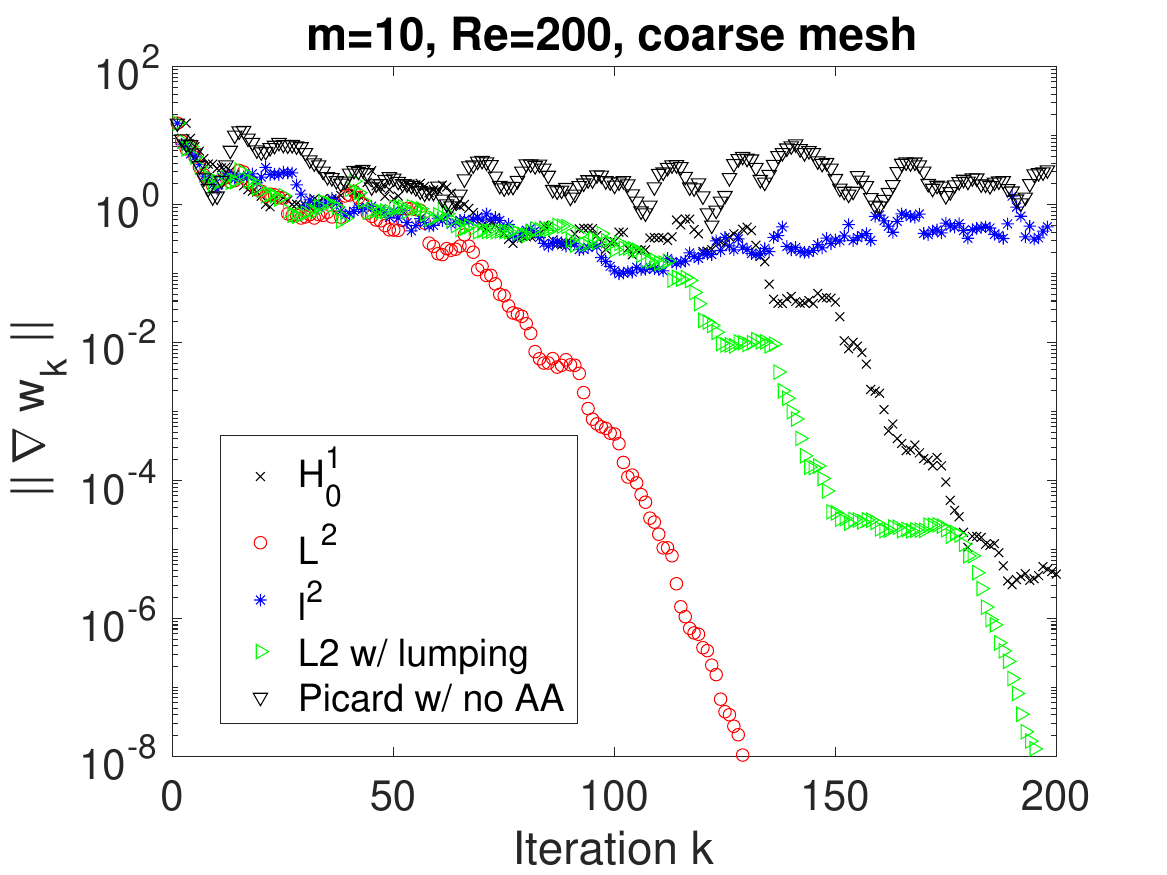}
\includegraphics[width = .32\textwidth, height=.32\textwidth,viewport=0 0 550 430, clip]{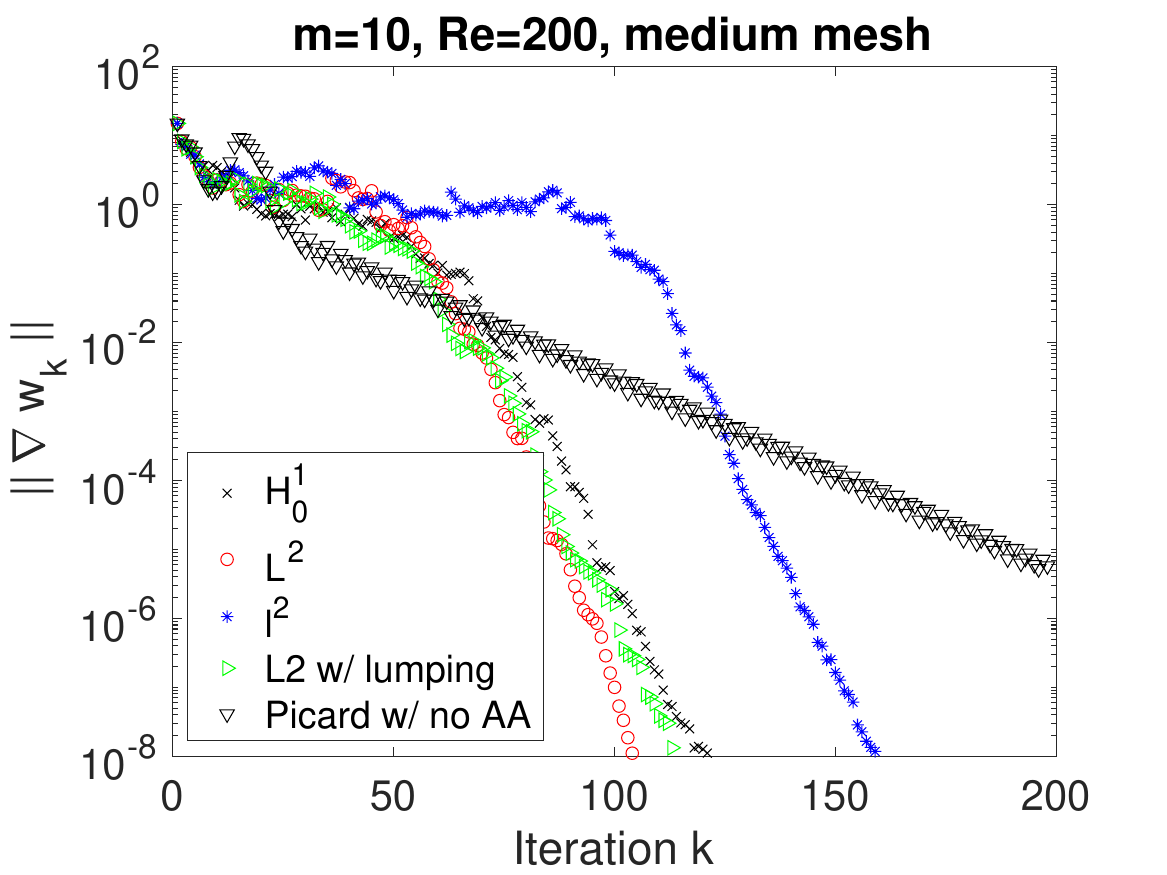}
\includegraphics[width = .32\textwidth, height=.32\textwidth,viewport=0 0 550 430, clip]{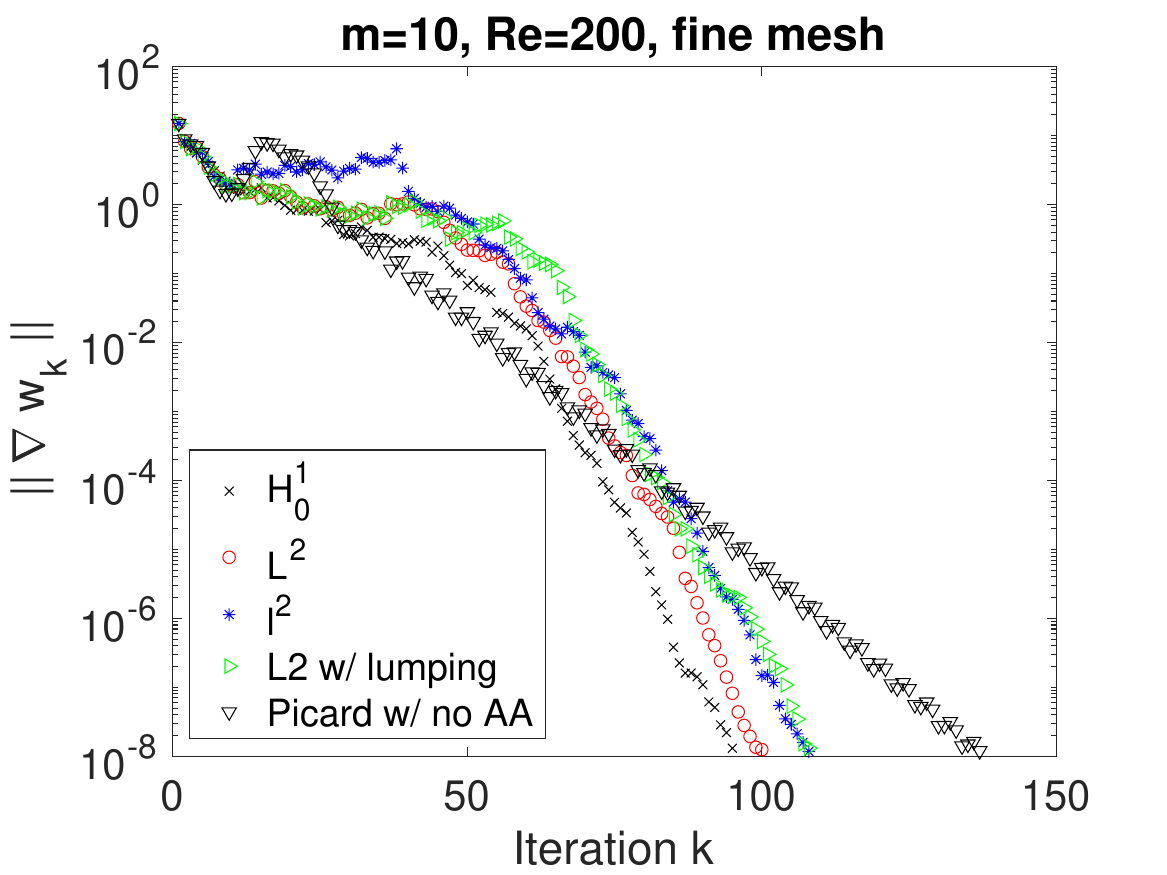}
\caption{\label{cylconv2} Shown above are the convergence results for Re=200 channel flow past a cylinder, for varying meshes and with (top) $m=1$ and $Re$=100 and (bottom) $m=10$ and $Re$=200.}
\end{figure}

\begin{figure}[ht]
\center
\includegraphics[width = .46\textwidth, height=.32\textwidth,viewport=0 0 800 460, clip]{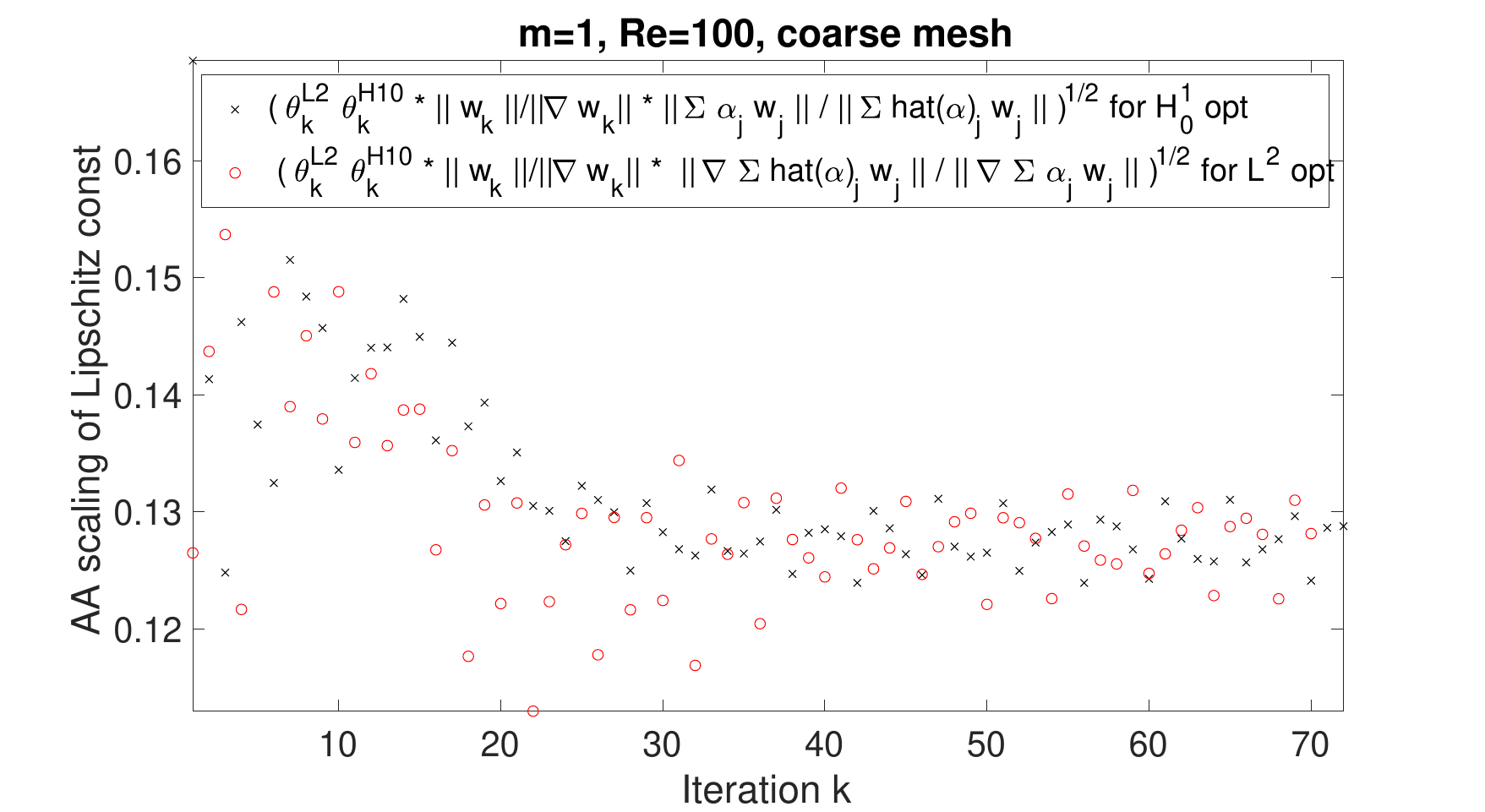}
\includegraphics[width = .46\textwidth, height=.32\textwidth,viewport=0 0 800 460, clip]{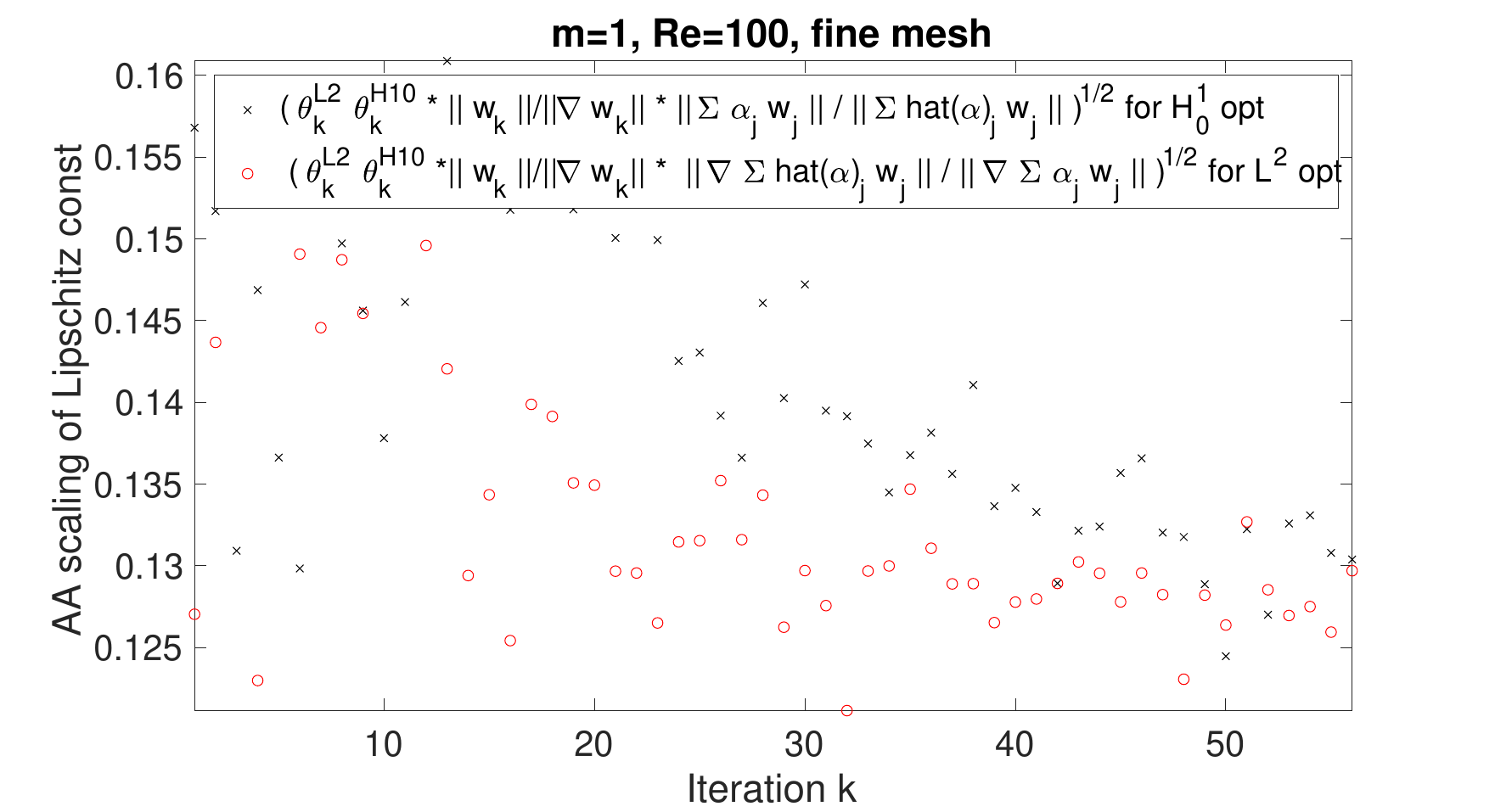}\\
\includegraphics[width = .46\textwidth, height=.32\textwidth,viewport=0 0 800 460, clip]{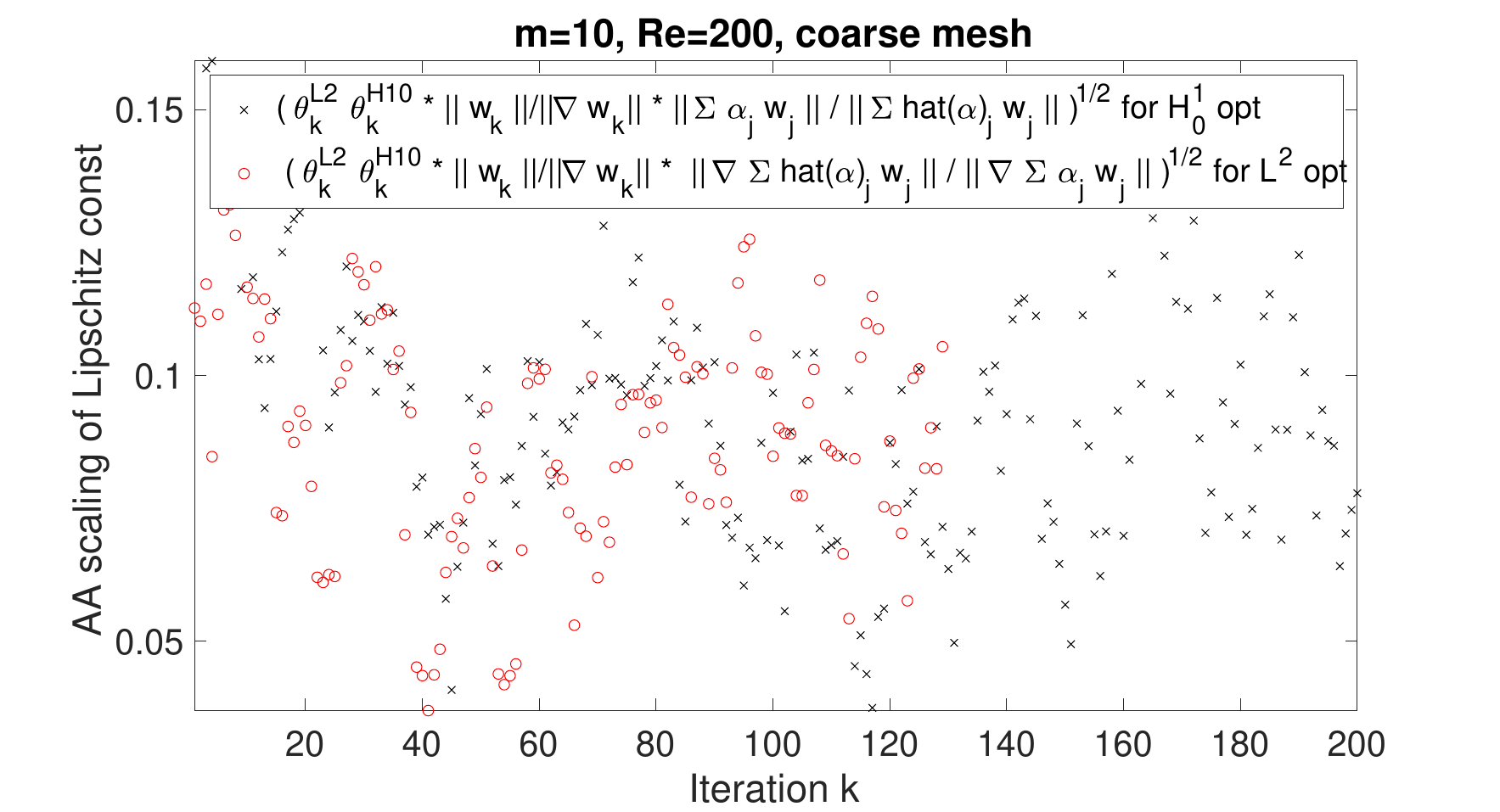}
\includegraphics[width = .46\textwidth, height=.32\textwidth,viewport=0 0 800 460, clip]{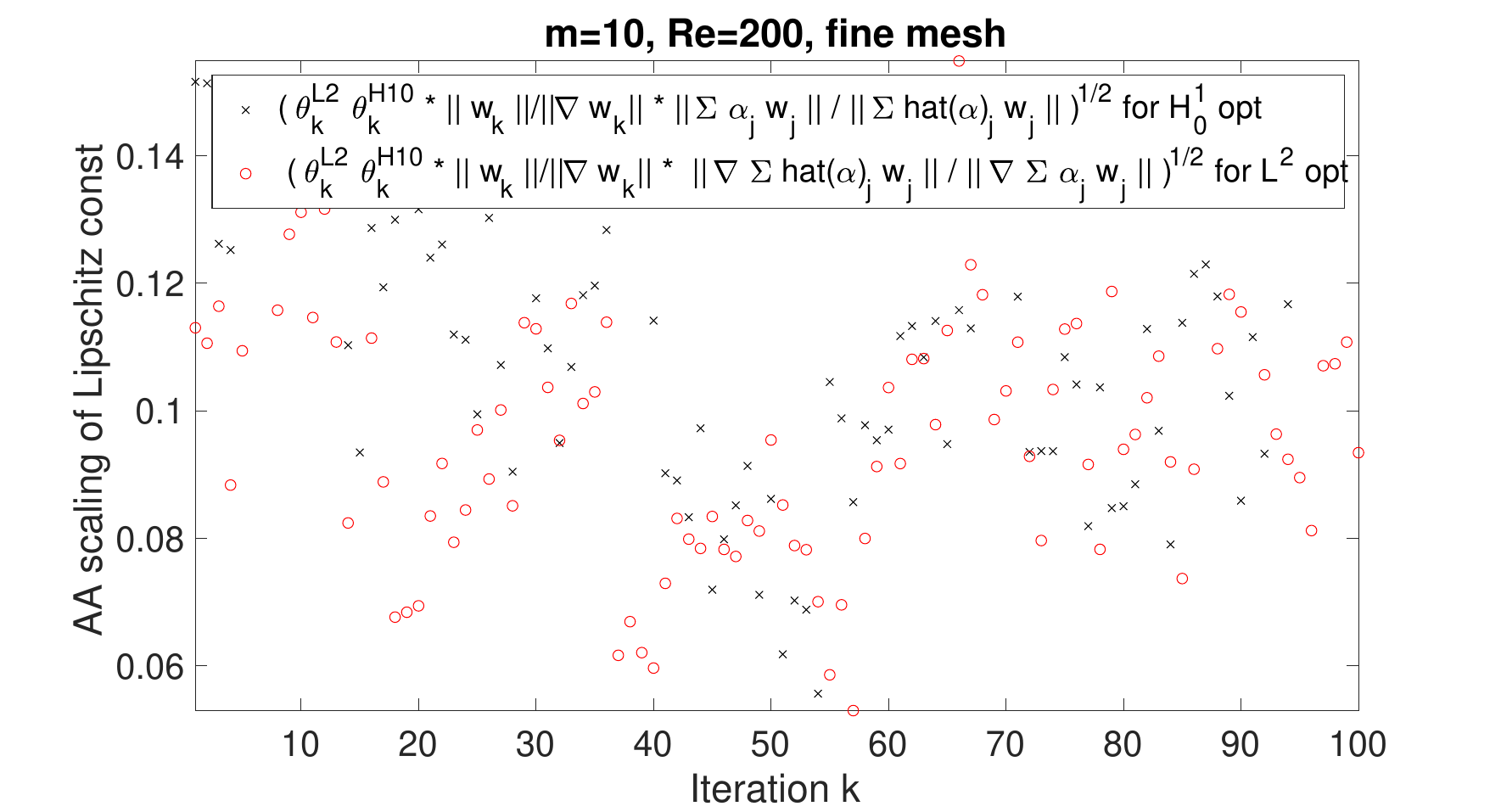}
\caption{\label{cylconv3} Shown above are first order residual factors (i.e. that scale the Lipschitz constant of Picard) for channel flow past a cylinder computations of AA-Picard with $H^1_0$ and $L^2$ norms used for AA optimization.}
\end{figure}

No-slip velocity boundary conditions are imposed on the walls and on the cylinder, and at the inflow the profile is set as 
\begin{align}
	u_1(0,y,t)=&u_1(2.2,y,t)=\frac{6}{0.41^2}y(0.41-y),\nonumber\\
	u_2(0,y,t)=&u_2(2.2,y,t)=0.
\end{align}
The outflow weakly enforces a zero traction (i.e. do nothing) Neumann boundary condition.  This problem has no external forcing, $f=0$. 

We compute with Reynolds numbers $Re$=100 and 200, which translates to $\nu=\frac{1}{1000}$ and $\frac{1}{2000}$ using the length scale as the diameter of the cylinder.  We note that many numerical tests have been run using $Re=100$  \cite{ST96,J04}, and it is known that it admits periodic-in-time solutions for these $Re$.  In fact, the critical Reynolds number $Re_c$ for the shift from steady to time dependent on this problem is near $Re_c=48$ \cite{OR25}.  Still, we search for (and find) steady NSE solutions at these Reynolds numbers (recall that the steady NSE admits solutions for any $Re$, although solutions may not be unique \cite{Laytonbook}).  Solution plots for these $Re$ are shown in Figure \ref{cylplot}.

We compute on three successively refined meshes, each of which is more refined around the cylinder than downstream in the channel.  The meshes are created by Delaunay triangulations that are then further refined with a barycenter refinement, except the finest mesh is additionally refined around the cylinder before the barycenter refinement.  The node spacing around the cylinder on the finest mesh is 9.24e-4.  The meshes are shown before the barycenter refinement is applied in Figure \ref{cylmesh}; we will refer to them as the coarse, medium and fine meshes.  We compute using $(P_2,P_1^{disc})$ Scott-Vogelius elements, which on these meshes provides for 29K, 68K, and 297K total degrees of freedom (dof) for the coarse, medium and fine meshes, respectively.  We test using $m=1$ for $Re$=100 and $m=10$ for $Re$=200, using $H^1_0$, $L^2$, $\ell^2$ and diagonally lumped $L^2$ norms for the AA optimization.  

Convergence results are shown in Figure \ref{cylconv2}.  For $Re$=100, we observe that convergence results using the four choices of optimization norms
 %$H^1_0$, $L^2$, $\ell^2$ and diagonally lumped $L^2$ 
 all display very similar convergence, and all provide an advantage of Picard.  The convergence of all AA-Picard tests get slightly better as the mesh is refined.  

The situation is more complicated for the tests using $Re$=200 and $m=10$.  First, we note that all methods improve as the meshes get finer.  In fact, on the coarse mesh, Picard and AA-Picard using the $\ell^2$ norm never get small residuals in 200 iterations but both converge on the fine mesh.  On the finest mesh, all AA-Picard tests converge in around 100 iterations, with $H^1_0$ performing the best and $\ell^2$ and diagonally lumped $L^2$ performing the worst, taking about 10 more iterations to converge.  On the medium mesh, $\ell^2$ performs significantly worse than the other choices of optimization norms, which perform roughly the same as each other.  The coarse mesh convergence behavior is very erratic, with $L^2$ performing best, $H^1_0$ and diagonally lumped $L^2$ next best, but with $\ell^2$ not reducing the fixed point residual below $10^{-1}$ at all in 200 iterations.  As the mesh is refined, we see only improved convergence in each method.

Additionally, we plot the first order coefficient terms in the residual expansion from Theorems \ref{thm:m1b} and \ref{thm:m1L2} for $H^1_0$ and $L^2$ optimization norms, respectively.  For the $H^1_0$ norm AA optimization, the first order term coefficient is
\begin{equation}
\kappa (\theta_k^{H^1_0} \theta_k^{L^2})^{1/2} 
\left( \frac{ \| \sum_{j=k-m}^{k}  \alpha_{j}^k w_{j} \|}{\| \sum_{j=k-m}^{k} \hat \alpha_{j}^k w_{j}  \|} \right)^{1/2} 
\left( \frac{ \| w_{k-1} \| }{C_P \| \nabla w_{k-1} \|} \right)^{1/2}, \label{factor1}
\end{equation}
and for the case of the $L^2$ norm, the first order term coefficient differs only in the one factor:
\begin{equation}
\kappa (\theta_k^{H^1_0} \theta_k^{L^2})^{1/2} 
\left( \frac{ \| \nabla \sum_{j=k-m}^{k}  \hat \alpha_{j}^k w_{j} \|}{\| \nabla \sum_{j=k-m}^{k} \alpha_{j}^k w_{j}  \|} \right)^{1/2} 
\left( \frac{ \| w_{k-1} \| }{C_P \| \nabla w_{k-1} \|} \right)^{1/2}, \label{factor2}
\end{equation}
%In all of our tests, we observe that the last term $\left( \frac{ \| w_{k-1} \| }{\| \nabla w_{k-1} \|} \right)^{1/2}$ is almost always between 0.1 and 0.2, and is roughly the same on average for all tests (i.e. there is no statistical difference in its size for varying AA optimization norm).  Hence we do not display plots of this term vs. $k$.

Figure \ref{cylconv3} shows plots of these factors \eqref{factor1}-\eqref{factor2} versus $k$, with the slight modification that $C_P$ and $\kappa$ are omitted from the calculation.  However, these are constants that are fixed with respect to $k$, and so omitting them does not change the comparison of factors.  We observe from the plots that although the factors for $L^2$ optimization norm are slightly smaller in each case, the factors are still fairly close (note the scale on the y-axis does not significantly vary).  From the four plots, the biggest difference in factor averages is when $m=1$ and $Re=100$ when it is $0.109$ for $L^2$ versus $0.097$ for $H^1_0$, and the smallest difference is $0.131$ for $L^2$ versus $0.130$ for $H^1_0$.  This is typical for all of our tests: the factors for $L^2$ tend to be slightly better (but not always), but not by a significant amount.

\subsection{2D driven cavity}

\begin{figure}
\includegraphics[trim = 110pt 40pt 90pt 5pt,clip = true,width=.4\textwidth,height=.4\textwidth]{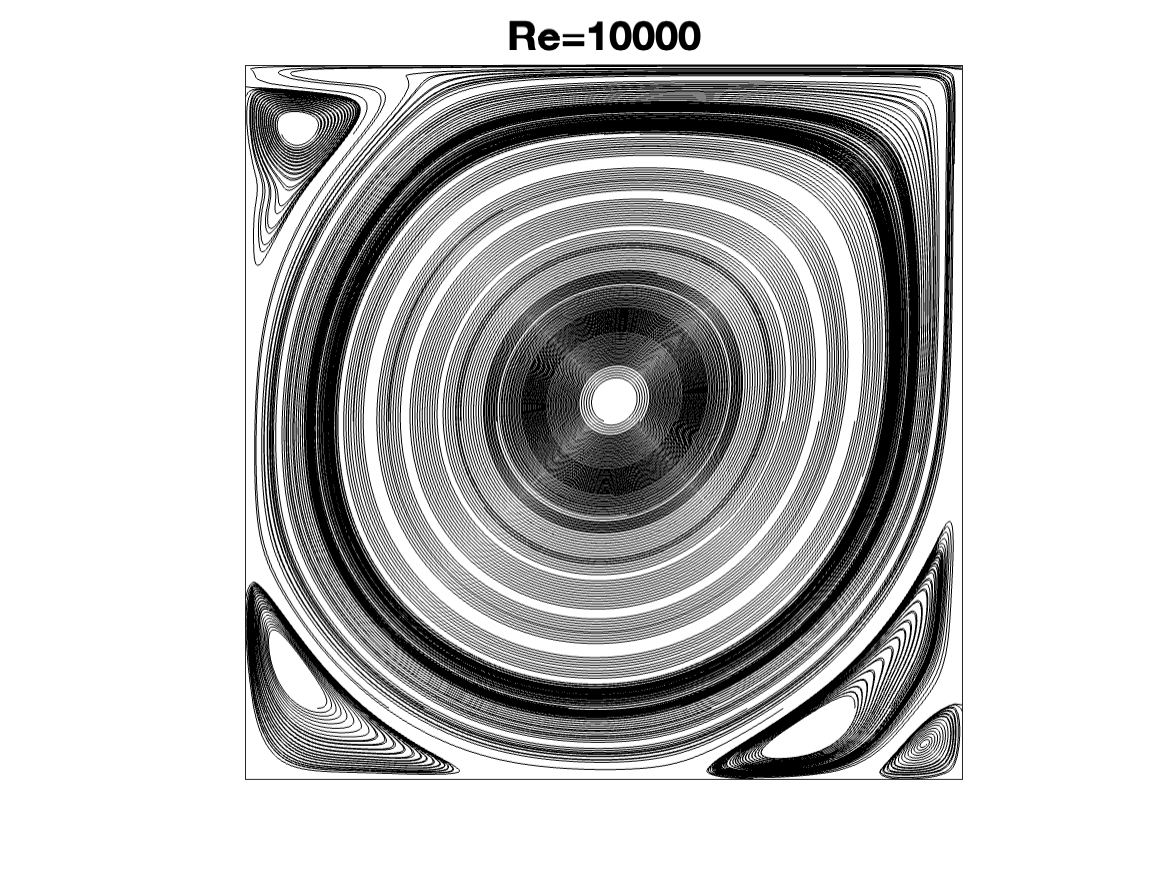}
\caption{\label{cavityplots} Shown above are streamlines of solutions found for the 2D driven cavity problems at $Re$=10000.}
\end{figure}

\begin{figure}
\includegraphics[trim = 0pt 0pt 0pt 0pt,clip = true,width=.32\textwidth,height=.32\textwidth]{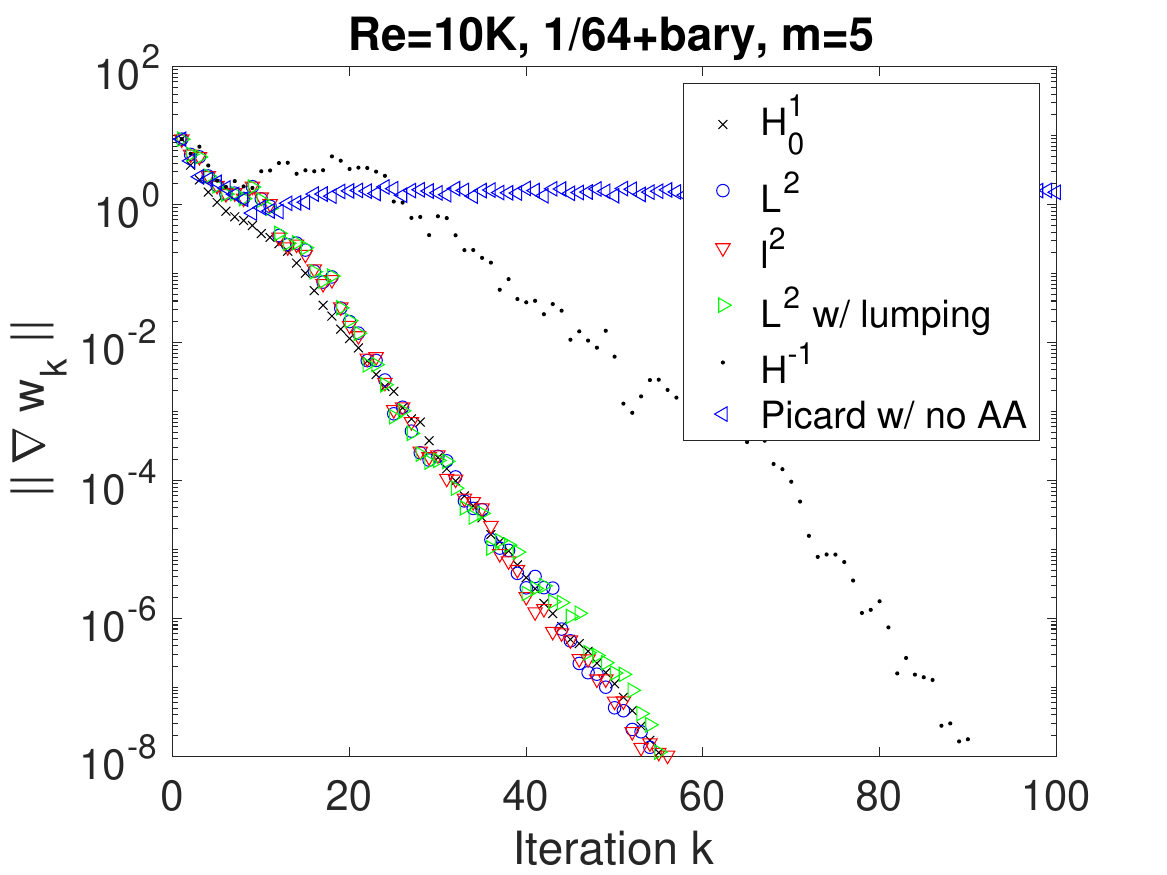}
\includegraphics[trim = 0pt 0pt 0pt 0pt,clip = true,width=.32\textwidth,height=.32\textwidth]{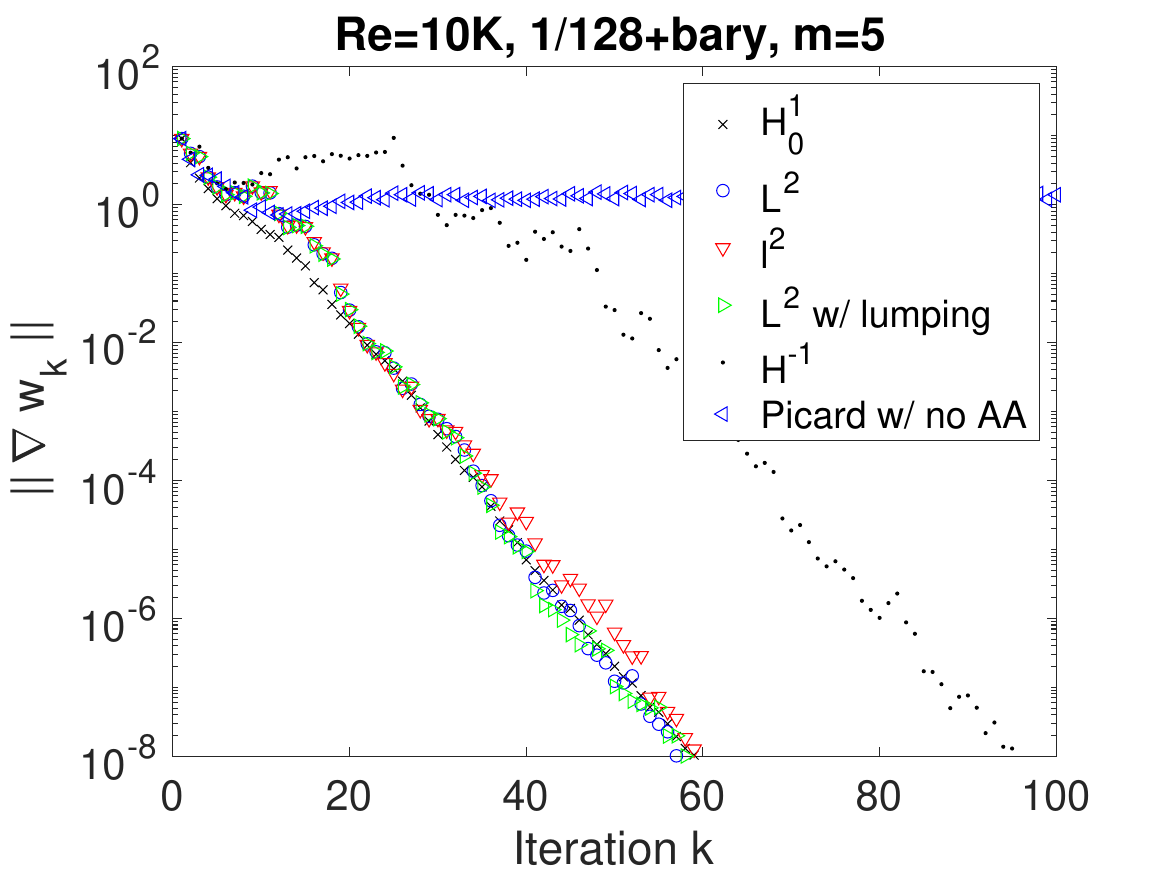}
\includegraphics[trim = 0pt 0pt 0pt 0pt,clip = true,width=.32\textwidth,height=.32\textwidth]{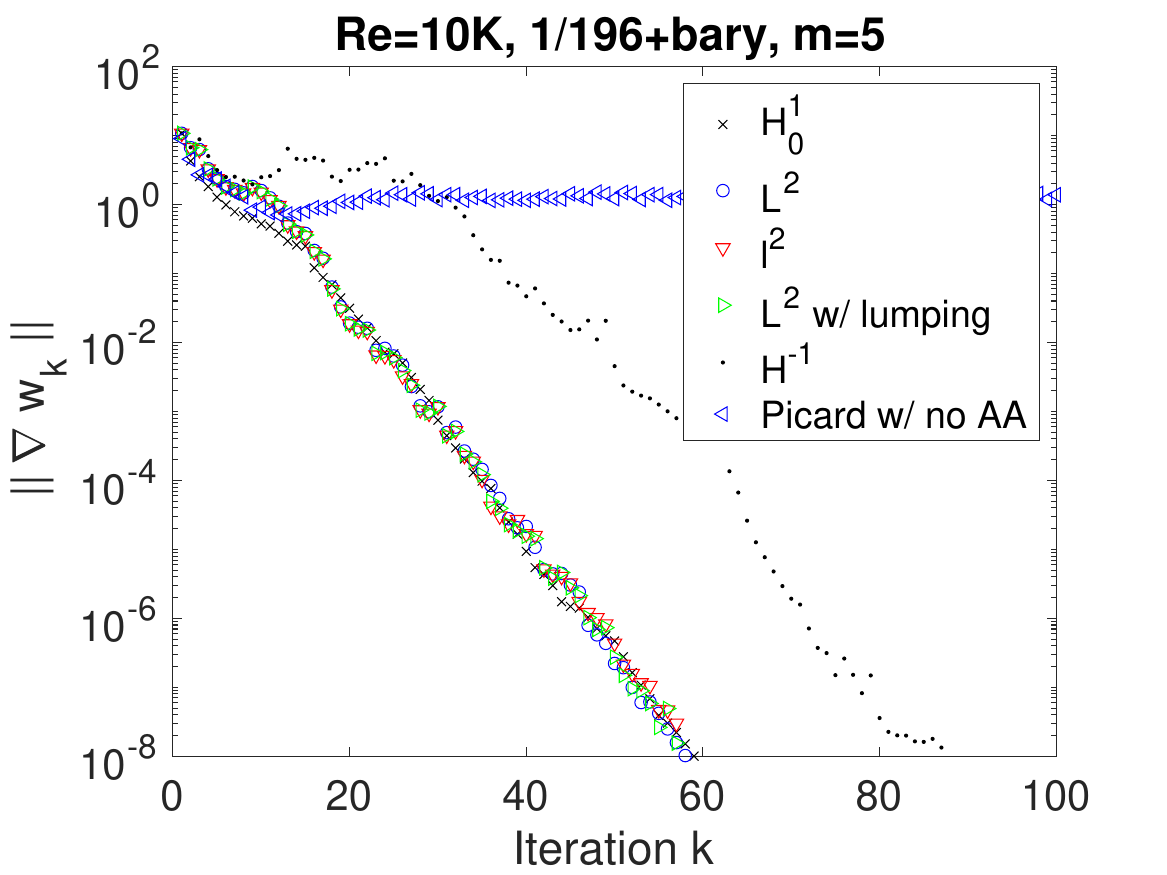}
\caption{\label{dctest1} Shown above are fixed point residuals  for the $Re$=10000 tests for Picard, and with AA-Picard $m=5$ with $H^1_0$, $L^2$, $\ell^2$, diagonally lumped $L^2$, and $H^{-1}$ used in the AA optimization step, on barycenter refined $h=\frac{1}{64},\ \frac{1}{128},\ \frac{1}{196}$ uniform meshes.}
\end{figure}

For our second test problem we use the benchmark 2D driven cavity.  The setup has domain $\Omega=(0,1)^2$, $f=0$, homogeneous Dirichlet boundary conditions enforced on the sides and bottom, and $[ 1,0 ]^T$ on the top (moving lid).  We test with $Re:=\nu^{-1}$=10000, and a plot of the solution is shown in Figure \ref{cavityplots} (and agrees well with solutions in the literature \cite{erturk}).  Our computations use $(P_2,P_1^{disc})$ Scott-Vogelius elements on barycenter refinements of $h=\frac{1}{64},\ \frac{1}{128},\ \frac{1}{196}$ uniform triangular meshes.  For this test, we use optimization norms $H^1_0$, $L^2$, diagonally lumped $L^2$, $\ell^2$, and also $H^{-1}$ with depth $m=5$ AA-Picard.  The latter norm was chosen because the other four give similar results on most of the test problems, and results using the $H^{-1}$ norm will demonstrate that not just any Hilbert space norm can be chosen.  Note that $\| \phi_h \|_{H^{-1}}$ is approximated in our computations by $(\hat \phi_h, S^{-1} \hat \phi_h)^{1/2}$ where $S$ is the stiffness matrix with Dirichlet boundary conditions enforced.

Convergence results are shown in Figure \ref{dctest1}, and show that nearly identical results are found when using any of $H^1_0$, $L^2$, diagonally lumped $L^2$, or $\ell^2$ as the optimization norm.  Results with the $H^{-1}$ norm are clearly worse, needing an extra 30 iterations to converge.  Picard fails to converge in all tests.  We observed no significant differences in convergence behavior for any of the methods on the different meshes.

%\begin{figure}
%\includegraphics[trim = 0pt 0pt 40pt 5pt,clip = true,width=.7\textwidth,height=.45\textwidth]{}
%\includegraphics[trim = 0pt 0pt 40pt 5pt,clip = true,width=.45\textwidth,height=.35\textwidth]{}
%\includegraphics[trim = 0pt 0pt 40pt 5pt,clip = true,width=.45\textwidth,height=.35\textwidth]{}
%\caption{\label{dctest1b} Shown above are residuals  for the $Re$=10000 tests in the $H^1_0(\Omega)$ norm for Picard with no AA, and with AA $m=1$ with $H^1_0$, $L^2$, $\ell^2$ and diagonally lumped $L^2$ used in the AA optimization step (top), as well as terms that scale the linear convergence rates in the convergence theorems  (bottom).}
%\end{figure}
%

\subsection{3D driven cavity}

\begin{figure}[ht]
\center
\includegraphics[width = .9\textwidth, height=.25\textwidth,viewport=150 0 1250 330, clip]{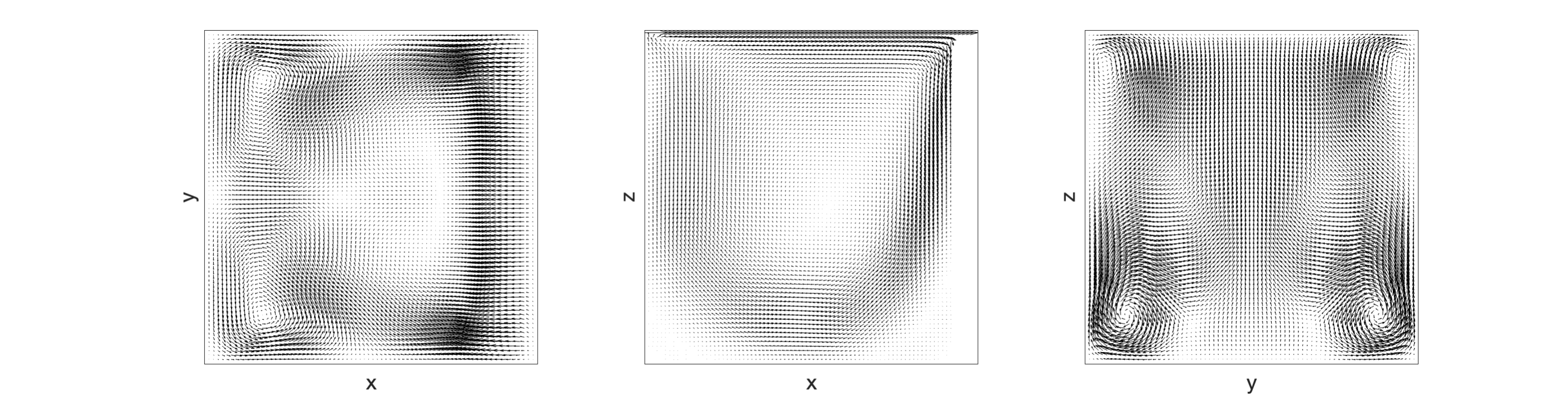}\\
\includegraphics[width = .4\textwidth, height=.3\textwidth,viewport=77 55 450 350, clip]{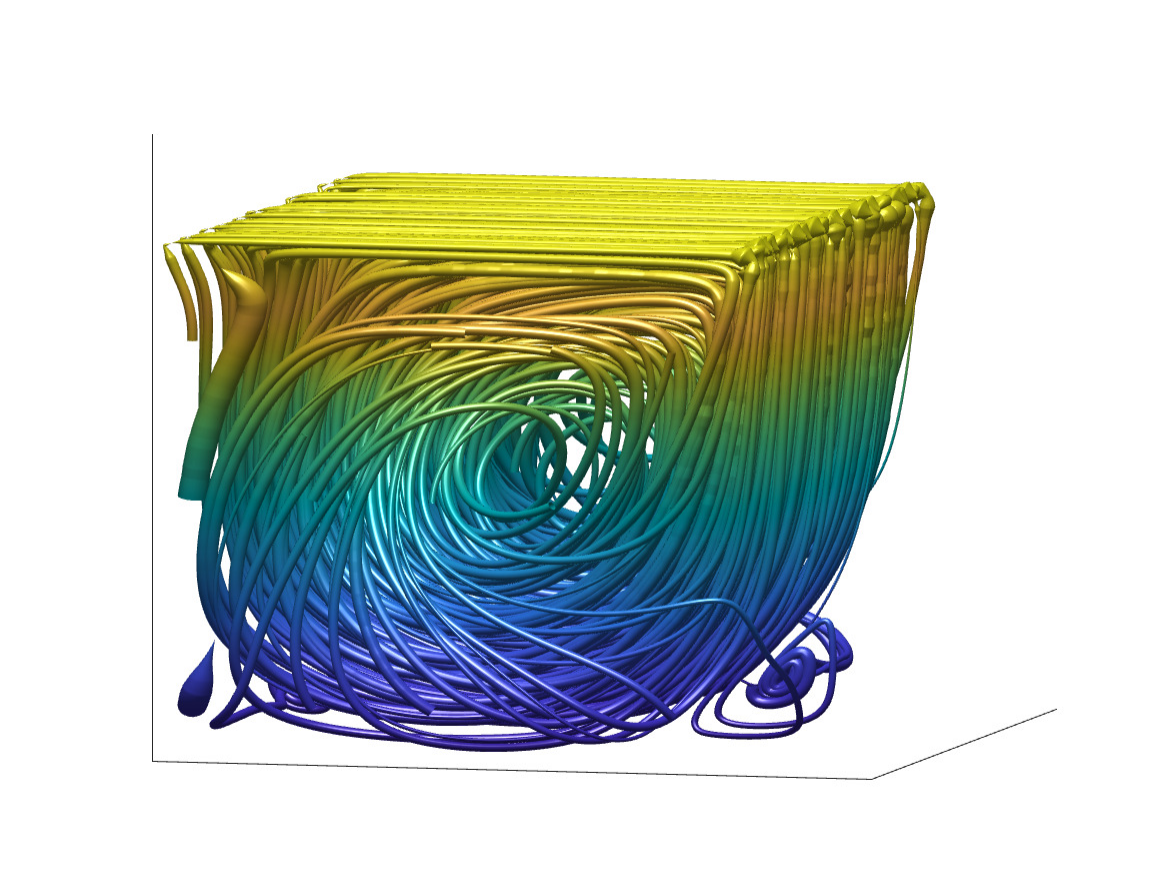}
\caption{\label{fig:midsliceplanes} Shown above are solution plots for the 3D driven cavity problem with $Re$=1000, as velocity vector midsliceplanes and as streamtubes.}
\end{figure}

Our final test is for the 3D lid-driven cavity problem, which is a benchmark 3D analogue of the 2D driven cavity test above.  The domain is the unit cube, there is zero forcing ($f=0$), homogeneous Dirichlet boundary conditions are strongly enforced on the walls and $u=[1,0,0]^T$ is enforced at the top to represent the moving lid.  The viscosity is chosen as the inverse of the Reynolds number, and we will use $Re$=1000 and 1500.  We use $m=20$ in these tests, as previous testing on this problem in \cite{PR25} has shown that larger $m$ performs better.  In this test, we use $H^1_0$, $L^2$, diagonally lumped $L^2$ and $\ell^2$ norms for the AA optimization.

We again compute using three levels of mesh refinement.  The coarsest mesh is constructed by using 8 Chebychev points on [0,1] (and also 0 and 1) to construct a 3D grid of rectangular boxes.  Each box is then split into 6 tetrahedra, and then an additional barycenter refinement splits each of these into 4 tetrahedra.  The mesh is then equipped with $(P_3, P_2^{disc})$ Scott-Vogelius elements, which provides approximately 438K total dof.  It is known from \cite{Z05} that this element choice on such a grid is inf-sup stable.  For the medium mesh, the same process is repeated but using 10 Chebychev points, and this yields 767K total dof.  For the finest mesh, we use the same process as the medium mesh but also refine at the boundary by adding 0.001 and 0.999 to the discretization of [0,1] to create the 3D rectangular boxes.  The finest mesh has 1.3M total dof.  Solution plots found with the finest discretization matched those from the literature \cite{WB02,EPRX20,PR21}, and are shown in Figure \ref{fig:midsliceplanes} for $Re$=1000.

\begin{figure}[ht]
\center
\includegraphics[width = .32\textwidth, height=.32\textwidth,viewport=0 0 550 430, clip]{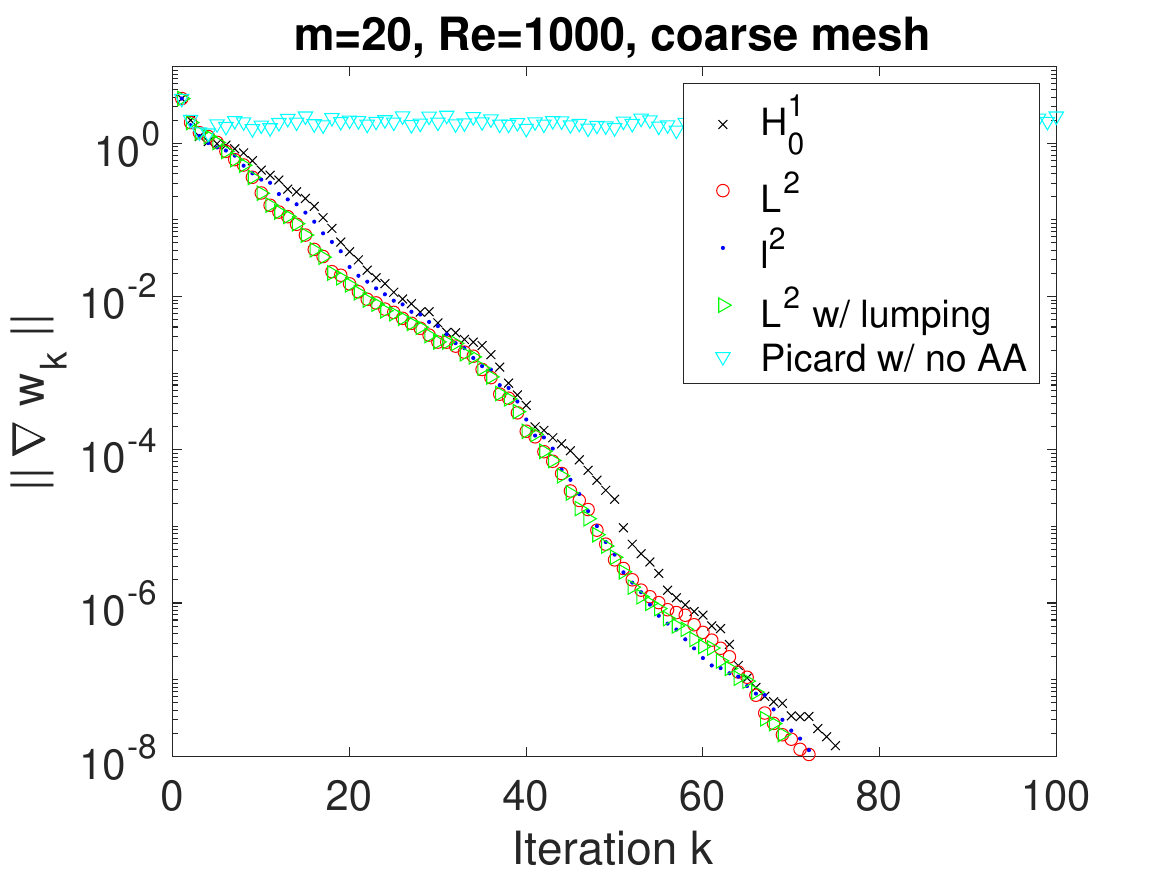}
\includegraphics[width = .32\textwidth, height=.32\textwidth,viewport=0 0 550 430, clip]{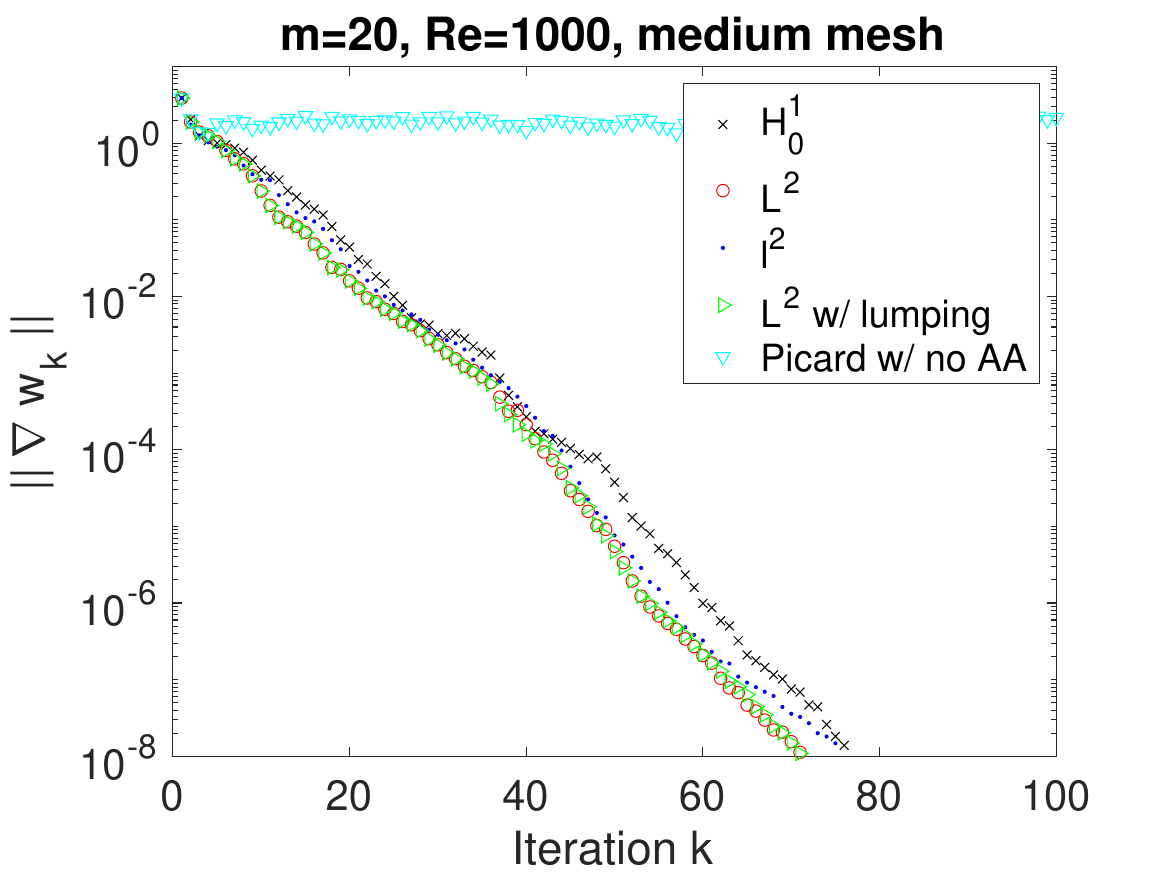}
\includegraphics[width = .32\textwidth, height=.32\textwidth,viewport=0 0 550 430, clip]{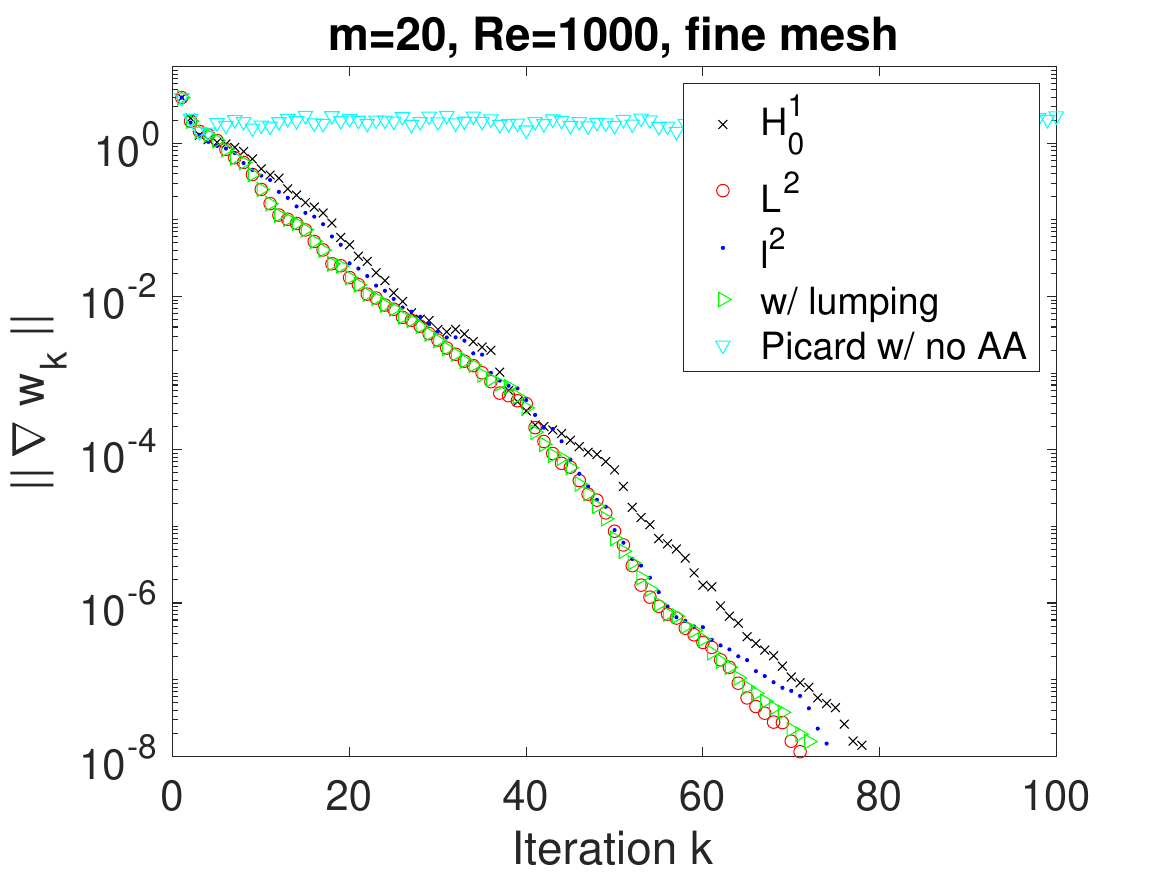}
\caption{\label{cav1000} Shown above are the convergence results for Re=1000 3D driven cavity flow with $m=20$, for varying meshes and varying choices of optimization norm.}
\end{figure}

Convergence results for $Re$=1000 are shown in Figure \ref{cav1000}.  We observe similar results on all three mesh levels, and also similar convergence behavior for each of $H^1_0$, $L^2$, diagonally lumped $L^2$, and $\ell^2$ norm choices for the AA optimization.  On all meshes, the usual Picard iteration without AA failed to converge in 100 iterations.  Convergence results for $Re$=1500 are shown in Figure \ref{cav1500}.  The Picard iteration failed to converge on all three meshes.  For the AA-Picard methods, the overall convergence behavior got slightly worse as the meshes were refined, although not enough to signal a clear mesh dependence.  The $H^1_0$ norm results were the worst on the coarsest and finest meshes but the best on the medium mesh (although in all cases, it was not the best or the worst by much).  Otherwise, the results show that the choices of $H^1_0$, $L^2$, diagonally lumped $L^2$, and $\ell^2$ norms all gave similar results.  
%Hence results from this test problem do not give any clear evidence that any of $H^1_0$, $L^2$, $L^2$ with lumping implementation, or $\ell^2$ were better than the others.

\begin{figure}[ht]
\center
\includegraphics[width = .32\textwidth, height=.32\textwidth,viewport=0 0 550 430, clip]{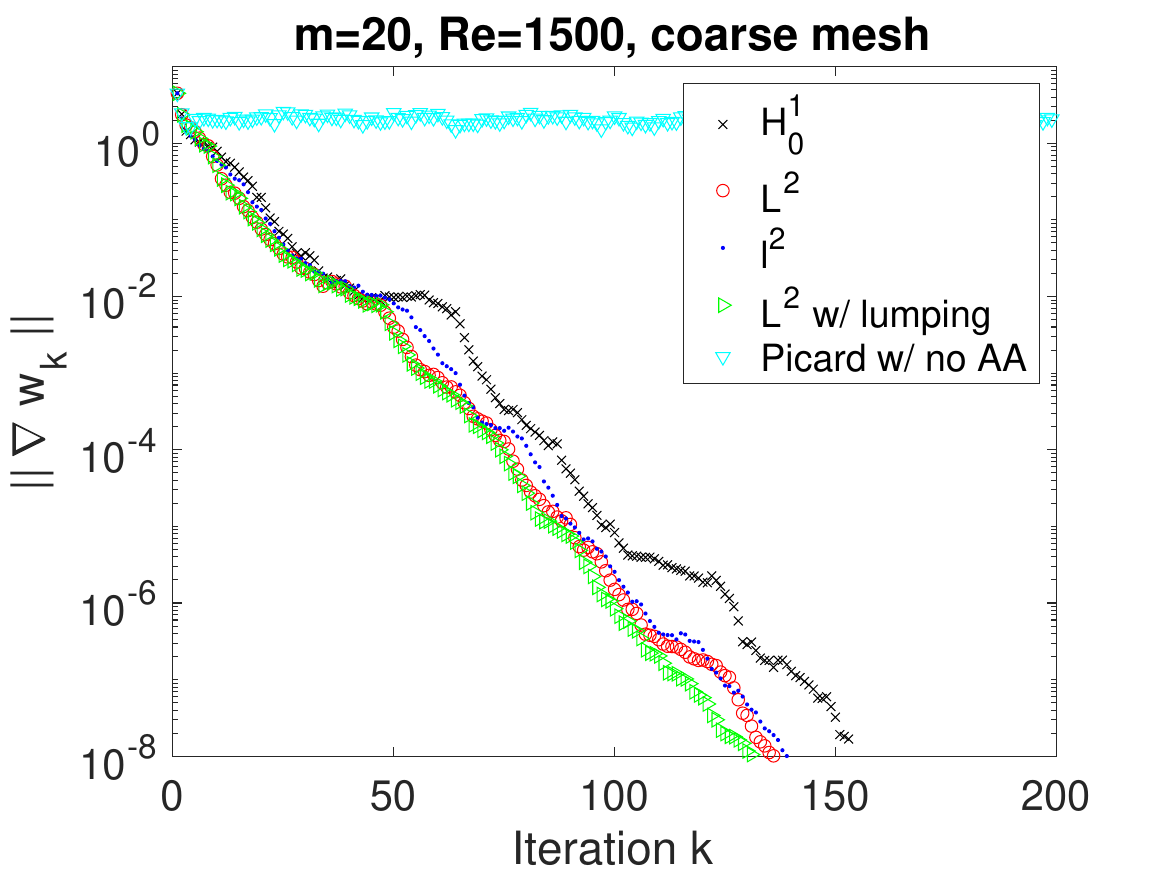}
\includegraphics[width = .32\textwidth, height=.32\textwidth,viewport=0 0 550 430, clip]{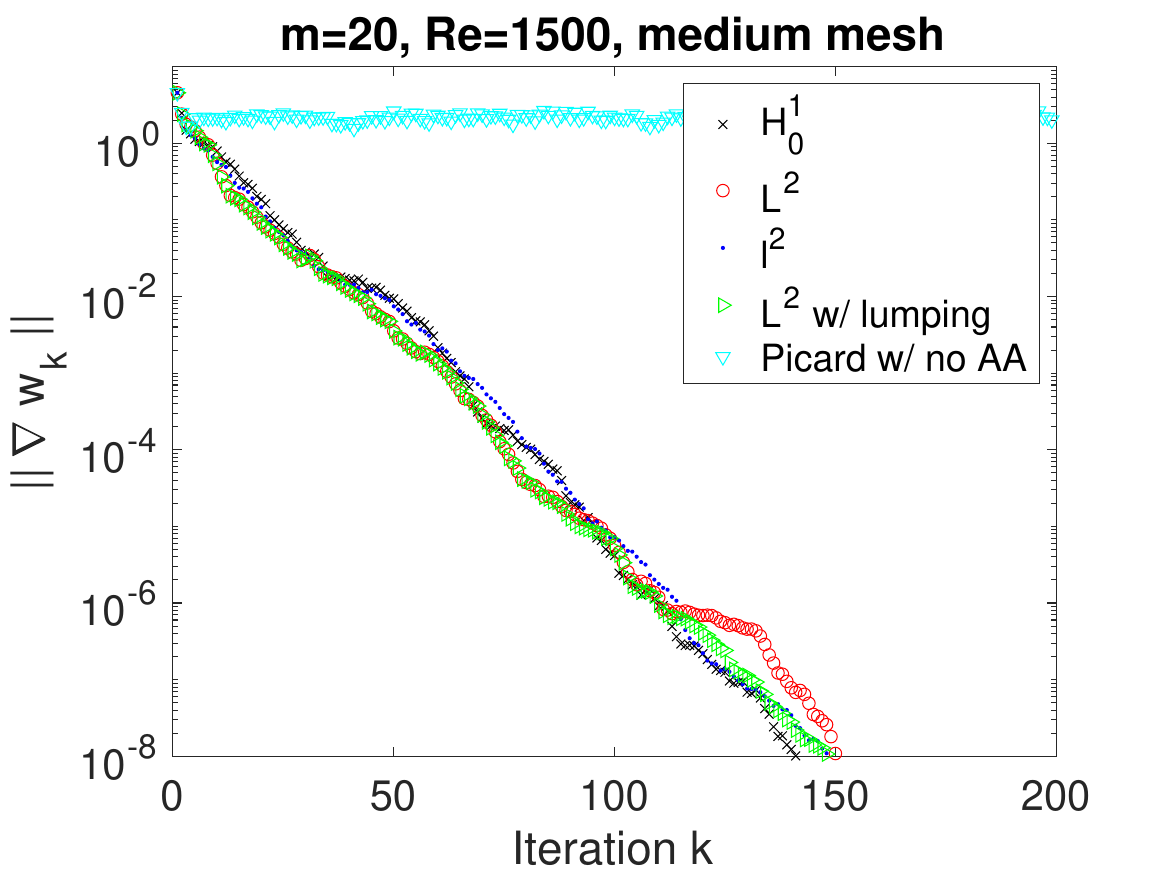}
\includegraphics[width = .32\textwidth, height=.32\textwidth,viewport=0 0 550 430, clip]{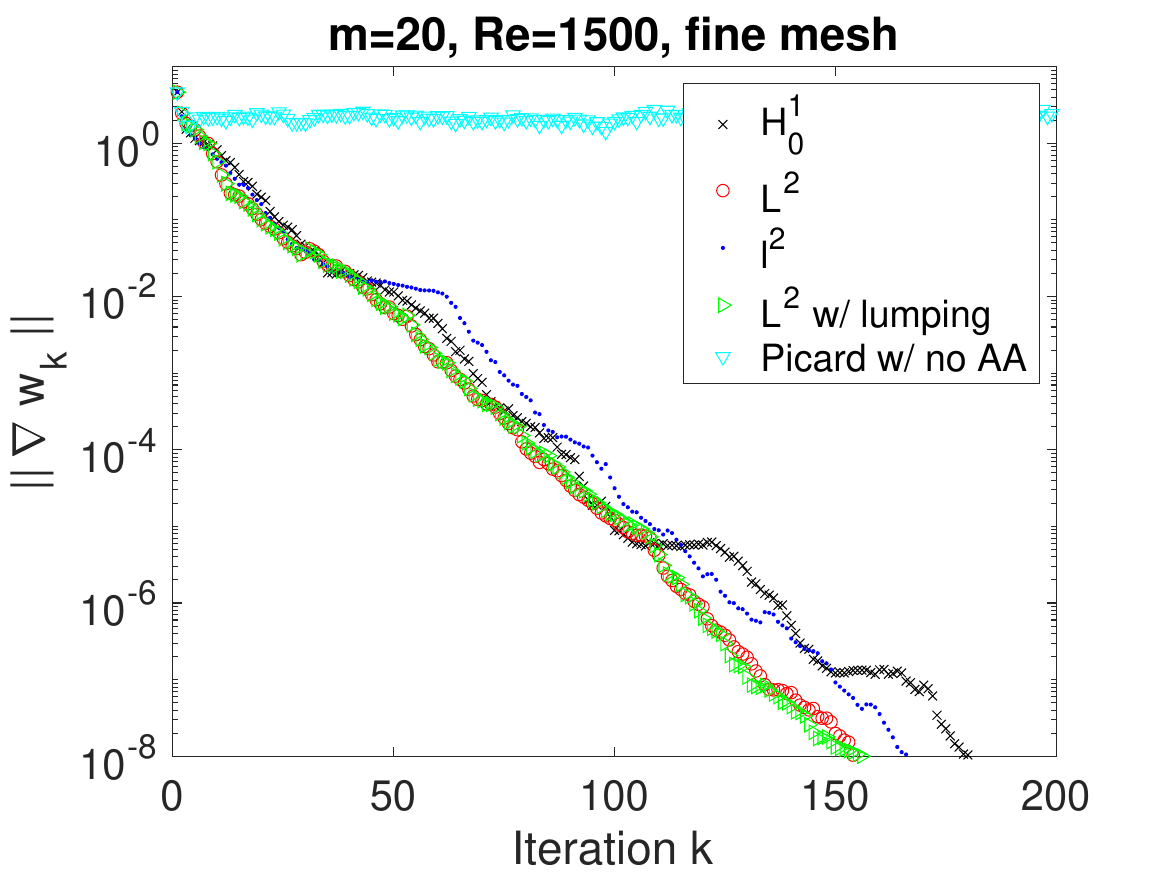}
\caption{\label{cav1500} Shown above are the convergence results for Re=1500 3D driven cavity flow with $m=20$, for varying meshes and varying choices of optimization norm.}
\end{figure}

\section{Conclusions and future directions}

This study investigated the effect of the choice of optimization norm in the AA-Picard iteration.  We have provided new theory that sharpens and generalizes the existing convergence theory of AA-Picard using the $H^1_0$ optimization norm  for the steady NSE by performing a problem-specific analysis instead of invoking the general AA theory from \cite{PR21}.  The improved estimates come from a more accurate upper bounding of the nonlinear terms in the convergence analysis which led to an improved scaling of the first order term in the residual expansion, new AA identities that were incorporated into the nonlinear term expansions, and removal of the assumption of small problem data.  We then proved a new convergence estimate for the same iteration but using the $L^2$ norm for the AA optimization, and found that by writing the first order terms in a particular way the result is observed to be  very similar to that of the $H^1_0$ case.  Our numerical tests showed only one instance where results using these two different norms were not very similar, and this was the coarsest mesh used on channel flow past a cylinder where $L^2$ was somewhat better (on the medium and fine meshes their convergence behavior was nearly identical).

In addition to comparing AA-Picard results using $H^1_0$ and $L^2$ norms, we also numerically tested AA-Picard using diagonally lumped $L^2$ and $\ell^2$.  On the driven cavity tests, even when the meshes were far from uniform, results with these norms gave very similar results to those of $H^1_0$ and $L^2$.  However, significant differences were observed on coarser unstructured meshes for 2D channel flow past a cylinder; here, $\ell^2$ gave significantly worse convergence results while lumped $L^2$ was only worse on the coarsest mesh.

Our numerical tests also provided a grid study for all three experiments, by using three successively refined meshes on each test.  For all three test problems, convergence behavior never got significantly worse as the mesh is refined but in some cases improved as the mesh was refined.  This gives evidence that AA-Picard convergence results will not deteriorate as the grid is refined (provided the grid is sufficiently fine for the $Re$ being used), which is in agreement with our theory that AA-Picard convergence is mesh independent, at least for $H^1_0$ and $L^2$ optimization norm choices.

For future work, a deeper exploration of when $\ell^2$ AA optimization norm performs worse with AA-Picard than $L^2$ should be performed.  While these two norms are related, in our tests $\ell^2$ often performed about the same, exceptwas significantly worse on one test problem (but on the finest mesh for that test problem $\ell^2$ performed the same as $L^2$).  Still, this paper has essentially answered the question {\it `Is it okay to just use the $\ell^2$ norm when using Anderson acceleration?'}; the answer is {\it `Since there is now evidence of $\ell^2$ performing poorly even in the case where the theory and tests show $L^2$ works well, you are better off using the Hilbert space norm of the problem or other norm for which there is a convergence theory.'}  
%Also, studies of other iterations for solving other nonlinear PDEs that compare results of their associated Hilbert space norm as the optimization norm with $\ell^2$, both analytically and numerically, is another important direction.

\section{Acknowledgement}
Author EH was supported in part by NSF DMS 2011490.  Author LR was supported in part by Department of Energy grant DE-SC0025292.

\bibliographystyle{plain}
\bibliography{graddiv}

\end{document}